\newtheorem{theorem}{Theorem}[section]
\newtheorem{definition}[theorem]{Definition}
\newtheorem{remark}{Remark}
\newtheorem{lemma}[theorem]{Lemma}
\newtheorem{proposition}[theorem]{Proposition}
\newtheorem{corollary}[theorem]{Corollary}
\newcommand{\nwithzero}{\N}
\newcommand{\nwithoutzero}{\N_+}
\newcommand{\bb}[1]{\mathbb{#1}}
\newcommand{\cc}[1]{\mathcal{#1}}
\newcommand{\ob}[1]{\left(#1\right )} 
\newcommand{\cb}[1]{\left[#1\right ]} 
\newcommand{\set}[1]{\left\{#1\right\}} 
\newcommand{\abs}[1]{\left\vert#1\right\vert} 
\newcommand{\norm}[1]{\|#1\|} 
\newcommand{\ccb}[1]{\llbracket #1 \rrbracket} 
\newcommand{\indicthat}[1]{{\mathbbm{1}}_{\left\{#1\right\}}}
\newcommand{\E}{\bb E}
\newcommand{\R}{\bb R}
\newcommand{\Z}{\bb Z}
\newcommand{\N}{\bb N}
\renewcommand{\P}{\bb P}
\newcommand{\Var}{\mathrm{Var}}
\newcommand{\T}{\bb T} 
\newcommand{\floor}[1]{\lfloor #1 \rfloor}
\newcommand{\ceil}[1]{\lceil #1 \rceil}
\newcommand{\V}[1]{\boldsymbol{#1}} 
\newcommand{\dist}{d} 
\newcommand{\st}[1]{\mathsf{St}(#1)} 
\newcommand{\sti}[1]{\tilde{\mathsf{St}}(#1)}
\newcommand{\gt}[1]{\mathsf{Gt}(#1)} 
\newcommand{\gtt}[0]{\mathsf{Gt}} 
\newcommand{\cycle}[0]{\mathsf{C}} 
\newcommand{\cstructure}[0]{\cc X} 
\newcommand{\cycles}[0]{ V_{\cycle}} 
\newcommand{\black}{\black}
\newcommand{\intint}[1]{\llbracket 1,#1 \rrbracket}
\newcommand{\intinta}[2]{\llbracket #1,#2 \rrbracket}
\newcommand{\yplus}{Y^+}
\newcommand{\yminus}{Y^-}
\newcommand{\yyplus}{\mathcal{Y}^{+}}
\newcommand{\yyminus}{\mathcal{Y}^{-}}
\newcommand{\yypm}{\mathcal{Y}^{\pm}}
\title{Directed Spatial Permutations on Asymmetric Tori}
\author{Alan Hammond}
\address{UC Berkeley}
\author{Tyler Helmuth}
\address{Durham University}
\date{17 June 2024}
\begin{document}

\maketitle

\begin{abstract}
  We investigate a model of random spatial permutations on
  two-dimensional tori, and establish that the joint distribution of
  large cycles is asymptotically given by the Poisson--Dirichlet
  distribution with parameter one. The asymmetry of the tori we
  consider leads to a spatial bias in the permutations, and this
  allows for a simple argument to deduce the existence of mesoscopic
  cycles. The main challenge is to leverage this mesoscopic structure
  to establish the existence and distribution of macroscopic
  cycles. We achieve this by a dynamical resampling argument in
  conjunction with a method developed by Schramm for the study of
  random transpositions on the complete graph. Our dynamical analysis
  implements generic heuristics for the occurrence of the
  Poisson--Dirichlet distribution in random spatial permutations, and
  hence may be of more general interest.
\end{abstract}

\section{Introduction}
\label{sec:todo}

\emph{Random permutations} arise in many contexts, with specific
applications leading to different choices of laws. It was observed
long ago that random \emph{spatial} permutations are relevant for the
study of low-temperature condensed matter physics, e.g., the
superfluid transition of helium-4~\cite{Feynman}. The
adjective spatial indicates that the law is biased by the geometry of
the system under study. In physical contexts the spatial bias tends to
zero as the temperature tends to zero.

To make this more concrete, consider the discrete setting of a finite
graph $G=(V,E)$, and let $\beta\geq 0$ represent inverse
temperature. One studies a law $\mu_{\beta}$ on permutations of $V$
that is the uniform distribution if $\beta=\infty$, but otherwise is
biased by the geometry of $G$. For example, adjacent vertices may be
more likely to be in a common cycle than distant vertices. It is
natural to wonder if $\mu_{\beta}$ with $\beta\gg 1$ retains features
of the uniform measure on permutations of $V$. Of particular interest
is the joint distribution of the largest cycles. The question is then
whether or not the largest cycles follow $\mathsf{PD}(1)$, the
Poisson--Dirichlet distribution with parameter $1$.  A precise
definition of $\mathsf{PD}(1)$ will be recalled below, along with the
fact that it describes the large cycles of a uniformly random
permutation. For a broader treatment of the Poisson--Dirichlet
distributions and generalizations thereof, see~\cite{feng}. 

Physically motivated questions about the existence and distribution of
large cycles began to be investigated mathematically roughly 30 years
ago~\cite{Toth1993,AN,Ueltschi}. For a survey from the perspective of
quantum spin systems see~\cite{GUW}. This interest can be traced to
T\'{o}th~\cite{Toth1993}, who formulated an appealing conjecture about
the existence of infinite cycles in the random transposition
model. This question has inspired a great deal of work, especially on
trees~\cite{Angel2003,Hammond2015,HamHeg,VJB} and in the mean-field
(complete graph) setting~\cite{Schramm2011,Berestycki,BK}. There has
been remarkable recent progress on T\'{o}th's conjecture by Elboim and
Sly~\cite{ElboimSly}. Macroscopic (i.e., positive density) cycles have
also recently been proven to exist on the Hamming cube~\cite{AKM} and
random regular graphs~\cite{Poudevigne}, or when reflection positivity
is available~\cite{QT,QT2}. Further related results
include~\cite{KMU,IoffeToth}. Proving the existence of macroscopic
cycles in models that are well-away from mean field, or when
reflection positivity is not available, remains a challenging problem.

A major conceptual advance concerning the distribution of large cycles
was made by Schramm~\cite{Schramm2011} in the mean-field
setting. Further progress in studying the joint distribution of large
cycles has since been made when the model under consideration possesses
integrable features~\cite{EP,BU} or is mean
field~\cite{BKLM}. Schramm's dynamical analysis suggests, however, the
presence of $\mathsf{PD}(1)$ distributed large cycles in many models
of random spatial permutations, whether or not integrable features are
present. Heuristic arguments along these lines have been presented
in~\cite[Section~8]{GUW}. Our goal is to rigorously implement such a
heuristic for a model of random spatial permutations whose relative
simplicity permits an analysis that is not too cluttered by
technicalities. The dynamical technique implemented here may help in
uncovering $\mathsf{PD}(1)$ distributed cycles in other models within the
broad class of random spatial permutations in which these statistics
are expected, and we hope to attract attention to these problems. We
indicate some directions for future study in \Cref{sec:further}.

\subsection{Directed Spatial Permutations: Model and Results}
\label{sec:model}

Let $C_{n}$ denote the cyclic graph on $n$ vertices. Let $\T_{n,m}$
denote the Cartesian product of $C_{n}$ and $C_{m}$, i.e.,
$\T_{n,m} \cong \Z^{2} / (n\Z \times m\Z)$ is an $n$ by $m$ subgraph
of $\Z^{2}$ with periodic boundary conditions (a torus
graph). Vertices in $\T_{n,m}$ will be denoted
$\V{x} = (x_{1},x_{2})$. Expressions in coordinates will always be
interpreted modulo the dimensions of the torus:
\begin{equation*}
\V{x}=(x_{1},x_{2}) = (x_{1}\!\!\!\mod n, x_{2}\!\!\!\mod m).
\end{equation*}

A bijection $\Phi \colon \T_{n,m} \to \T_{n,m}$ determines a
discrete-time dynamical system. The orbits under $\Phi$ are cyclic
subgraphs of $\T_{n,m}$ which will be called \emph{cycles}. We write
$|\cycle|$ for the number of vertices (equivalently, edges) in a cycle
$\cycle$. The aim of this article is to study the joint statistics of
the large cycles of a family of random bijections defined as follows.

Let $\phi$ be the single-step distribution of a one-dimensional 
simple symmetric random walk, i.e., $\phi$ takes values in
$\{-1,0,1\}$ and $\phi(1)=\phi(-1)=a$. Let
$(\tilde \phi_{\V{x}})_{\V{x}\in \T_{n,m}}$ be
a collection of IID random variables, each distributed as $\phi$. 
Define $\tilde\Phi \colon \T_{n,m}\to \T_{n,m}$ by
\begin{equation}
  \label{eq:Bij-Def}
  \tilde{\Phi}( \V{x} ) = (x_{1}+1,x_{2}+\tilde \phi_{\V{x}}+1). 
\end{equation}
When $n\neq m$ the presence of $+1$ in the second coordinate
introduces a bias that will play an important role in what follows.

Note that $\tilde{\Phi}$ given by~\eqref{eq:Bij-Def} will typically
not be a bijection. Let $\Phi$ denote the random variable
$\tilde{\Phi}$ conditioned on being a bijection. We call $\Phi$ a
\emph{directed spatial permutation}. Our symmetry hypothesis on $\phi$
implies that this random variable depends the parameter $a=\phi(1)$ as
well as on $n,m$. We will study $\Phi$ as $m$ tends to infinity under
the following assumptions. The first is mild; the second will be
discussed below.
\begin{enumerate}
\item \textbf{Non-degeneracy}: $0<a<1/2$, and
\item \textbf{Asymmetry of $\T_{n,m}$}: the horizontal dimension $n$
  is given by $n=m+C(m)$ with $C(m) = \ceil{C'\sqrt{m\log m}}$, $C'$
  an absolute constant.
\end{enumerate}

Let $\mathcal{S}_{N}$ denote the set of permutations of $N$
elements. For directed spatial permutations $N=nm$, but the next
definition is more general.
\begin{definition}
  \label{def:Cycle-Structure}
  The \emph{cycle structure} $\cstructure(\pi)$ of a bijection
  $\pi\in \mathcal{S}_N$ is the non-increasing list of normalized
  lengths $(L_{1}, L_{2}, \dots)$ of the cycles of $\pi$, where the
  normalized length $L$ of a cycle $\cycle$ is
  $N^{-1}\abs{\cycle}$. If $\pi$ consists of exactly $k$ distinct
  cycles, then $L_{j}=0$ for $j>k$.
\end{definition}

Before stating our main theorem we recall the definition of the
\emph{Poisson--Dirichlet distribution with parameter $1$}, denoted
$\mathsf{PD}(1)$. $\mathsf{PD}(1)$ is the probability distribution on the
infinite-dimensional simplex
\begin{equation*}
\Delta = \{ (y_{i})_{i\in \nwithoutzero} \mid y_{i}\geq 0,
\sum_{i\in\nwithoutzero}y_{i}=1, y_{1}\geq y_{2}\geq\dots \}
\end{equation*}
defined by the following procedure. Let $(U_{i})_{i\in\nwithoutzero}$
be a sequence of IID random variables, uniform on $\cb{0,1}$. Set
$x_{1}=U_{1}$, and inductively
$x_{i}=U_{i}(1-\sum_{j=1}^{i-1}x_{j})$. Let
$(y_{i})_{i\in\nwithoutzero}$ be the $x_{i}$ in non-increasing
order. Then $\mathsf{PD}(1)$ is the law of
$(y_{i})_{i\in\nwithoutzero}$. It is well-known that uniformly random
permutations have $\mathsf{PD}(1)$ as their limiting cycle
structure~\cite{Kingman,VershikShmidt}.

\begin{theorem}
  \label{thm:PD1}
  Fix $0<a<1/2$, let $C(m)=\ceil{C' \sqrt{m\log m}}$, and suppose $C'$ is a
  large enough absolute constant (independent of $a$). Let $\Phi$ be
  the law of directed spatial permutations on $\T_{n,m}$ with
  $n=m+C(m)$. As $m\to\infty$, the distribution of cycle lengths $\cstructure(\Phi)$
  converges weakly to $\mathsf{PD}(1)$.
\end{theorem}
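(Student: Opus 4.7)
The plan is to combine two ingredients: first, exploit the torus asymmetry to produce a sea of mesoscopic cycles (those of length diverging but of vanishing density); second, run a dynamical resampling chain and apply Schramm's split--merge strategy to promote this mesoscopic structure to the macroscopic $\mathsf{PD}(1)$ statistics claimed.

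For the first step, I note that $\Phi$ advances the horizontal coordinate deterministically by $+1$, so every cycle has length a multiple of $n$. After $k$ wraps of the horizontal circle, the vertical displacement from the starting vertex equals $kC(m) + S_{kn}$ modulo $m$, where $S_{kn}$ is the sum of the $\tilde\phi$-values at the $kn$ vertices visited along the orbit; the orbit closes at the first $k$ with $kC(m) + S_{kn}\equiv 0 \pmod m$. Because $C(m)=\Theta(\sqrt{m\log m})$ dominates the per-wrap fluctuation scale $\sqrt{2an}$, a local CLT combined with an equidistribution argument for $kC(m)\bmod m$ yields a hitting time of order $\sqrt{m/\log m}$, hence cycle lengths of order $m^{3/2}/\sqrt{\log m}$. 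These are mesoscopic relative to $N=nm\sim m^2$. The conditioning that $\tilde\Phi$ be a bijection must be handled, but on small spatial scales this conditioning is benign and can be transferred to the annealed model via the local limit theorem.

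For the second step, I would introduce a reversible Markov chain on bijections, invariant under the law of $\Phi$, that selects a random rectangular patch and resamples the $\tilde\phi$-variables inside it conditional on the global bijection property. Following Schramm, mark two vertices uniformly at random and track the pair of cycles containing them. Each accepted update either merges two cycles (when the patch intersects both) or splits a single cycle into two (when the patch intersects only one), with transition probabilities proportional, up to controlled errors, to the masses of the cycles involved. The plentiful mesoscopic cycles act as a well-mixing reservoir: they meet the patch with high probability, and they ensure that pairs of points sitting inside a common large cycle are effectively uniformly placed along it. The induced dynamics on ranked cycle masses is then the mass-weighted split--merge chain, whose unique invariant law on the simplex $\Delta$ is $\mathsf{PD}(1)$.

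The principal obstacle is the quantitative implementation of this second step. Three estimates must be produced. First, the resampling chain must mix on cycle structure faster than the autocorrelation time of a single macroscopic mass, so that the split--merge approximation can be iterated. Second, two marked vertices known to lie in a common large cycle must be shown to be approximately uniform within that cycle; this is the spatial analogue of the crucial estimate in Schramm's mean-field argument, and it is where the mesoscopic reservoir is essential. Third, error terms must remain controllable over the full time scale on which the approximating split--merge dynamics equilibrates. Once these ingredients are assembled, the weak convergence $\cstructure(\Phi)\to\mathsf{PD}(1)$ follows by identifying $\mathsf{PD}(1)$ as the unique fixed point of mass-weighted split--merge on $\Delta$.
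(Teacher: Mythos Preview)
Your high-level outline matches the paper's heuristic (conditions {[LC]}, {[M]}, {[D]} in \Cref{sec:heuristic}), but the concrete implementation diverges from the paper in two places where the mechanism you propose would not, as stated, deliver what is needed.

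\textbf{The mixing condition {[M]}.} You plan to show that ``two marked vertices known to lie in a common large cycle are approximately uniform within it''. This is not the route the paper takes, and it is not clear how you would prove it here: cycles are not random-walk bridges, and the bijection conditioning couples vertices within a column. What the paper actually establishes is a \emph{concentration of contacts} statement (\Cref{thm:CofC}): for any two disjoint global traversals, the number of vertical edges at which they are adjacent with compatible arrows is $\Theta(m/\log m)$ with sub-polynomial fluctuations. Since a Glauber update at a contact produces a split or merge, and since a uniformly chosen contact therefore selects two global traversals essentially uniformly, the induced dynamics chooses cycles with probability proportional to $\lfloor\cycle_i\rfloor_{\gtt}\lfloor\cycle_j\rfloor_{\gtt}$ (\Cref{lem:CycleDynGT}). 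This replaces your uniformity-of-marked-points estimate entirely, and it is the technical heart of the paper. Your patch-resampling dynamics does not give access to this structure; the paper's dynamics is the single-site Glauber chain for the hard-core model on each column (\Cref{sec:HC-GD}), and the hard-core representation (\Cref{prop:HC-Rep}) is what makes both the dynamics and the exponential decorrelation (\Cref{lem:HC-CD}) explicit.

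\textbf{The endgame.} Your first listed estimate, that ``the resampling chain must mix on cycle structure faster than the autocorrelation time of a single macroscopic mass'', is not what is used, and indeed no mixing-time bound is proved. The chain is started \emph{in equilibrium}, so $\Phi_k$ is distributed as $\Phi$ for every $k$. One then couples the induced split--merge dynamics with the genuine random-transposition chain for only $q=\epsilon^{-1/2}$ non-lazy steps---a constant in $m$---and invokes Schramm's result (\Cref{thm:Sdiscrete}) that this many steps already yield $\mathsf{PD}(1)$ to accuracy $\epsilon$, provided the initial condition has few small cycles in the sense of \eqref{eq:Sdiscrete}. That last hypothesis is supplied by \Cref{lem:Sch2.3a}, itself a Schramm-style doubling argument run under the equilibrium dynamics. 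Identifying $\mathsf{PD}(1)$ as the unique invariant measure of split--merge, as you propose, would require controlling the effective dynamics for a diverging number of steps, and the $o(m^{-1/9})$ errors per step would then need much sharper control than is available.
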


\subsection{Heuristics and Proof Outline}
\label{sec:sketch}

\subsubsection{Dynamical Heuristic for Poisson--Dirichlet}
\label{sec:heuristic}

The heuristic for why the Poisson--Dirichlet distribution should arise
when studying random spatial permutations is based on a dynamical
perspective on the uniform measure on $\mathcal{S}_{N}$. The relevant
dynamics are the following. 

\begin{definition}
  \label{def:SB-SM}
  The \emph{random transposition model} on $\{1,\dots, N\}$
  is the Markov chain $(\pi_{j})_{j\geq 0}$ with state space
  $\cc S_{N}$ and transition probabilities
  \begin{equation*}
    \P\cb{ \pi_{j+1}=\sigma\pi_{j}} = {N\choose 2}^{-1}, \qquad
    \textrm{$\sigma$ a transposition}.
  \end{equation*}
\end{definition}
By checking the detailed balance equations one can observe that the
stationary distribution of the random transpositions model is the
uniform distribution on $\mathcal{S}_{N}$. Since the limiting cycle
structure of the uniform distribution is $\mathsf{PD}(1)$, the
asymptotic distribution of the cycle structure of the random transposition chain
will essentially be $\mathsf{PD}(1)$.

If we are only concerned with the sizes of cycles, an alternate view
of the random transposition dynamics as a size-biased split-merge
dynamics can be given. The $(j+1)$\textsuperscript{st} step of the
dynamics chooses two (not necessarily distinct) cycles
$\cycle_{1}$ and $\cycle_{2}$ of $\pi_{j}$ with probabilities
$|\cycle_{i}|/N$, and (i) if $\cycle_{1}\neq\cycle_{2}$, the cycles are
merged, and (ii) if instead $\cycle_{1}=\cycle_{2}$, the cycle is
uniformly split into two cycles. We refer to case (i) as a
\emph{merge} and case (ii) as a \emph{split}.

Heuristic conditions for when Poisson--Dirichlet statistics are likely
to occur in models of random spatial permutations can now be
given. Suppose the following conditions hold (possibly in an
approximate sense, which we will not attempt to quantify):
\begin{enumerate}
\item[{[LC]}] Large cycles exist and occupy a positive fraction of space.
\item[{[M]}] Large cycles are well-mixed in space, in the following
  sense. Let $X$ be a random variable that chooses two distinct points
  in space, independently of the cycle configuration. For example, $X$
  could be the endpoints of a uniformly chosen edge. Then we suppose
  that the probability the points of $X$ are contained in two distinct cycles
  $\cycle$, $\cycle'$ is proportional to $|\cycle| |\cycle'|$. 
\item[{[D]}] The law on permutations is invariant under a non-trivial local
  dynamics that updates by (i) choosing two points according to $X$
  and (ii) resampling where the points of $X$ are mapped to with the
  correct marginal law. 
\end{enumerate}
Note that the resampling in {[D]} may result in no change in the
configuration (the resampled configuration could be the same), but if
a change occurs, then the selected cycles merge if they were distinct,
and split if they were the same. By a non-trivial dynamics we
  mean roughly that changes occur with positive probability.

Condition {[LC]} ensures that {[M]} is a non-trivial
statement. Conditions {[M]} and {[D]} indicate that the split-merge
dynamics on large cycles is essentially the same as the random
transposition dynamics, provided the marginal law does not have a
systematic bias towards either splits or merges.  Consequently one
expects $\mathsf{PD}(1)$ to arise under a suitable normalization.  If
a consistent and systematic bias towards splits or merges is present
in the local dynamics of {[D]}, then the parameter $1$ may change. A
more explicit discussion of this heuristic in a particular context can
be found in~\cite[Section~8]{GUW}, see also~\cite[Section~1.2]{BKLM}.

\subsubsection{Proof Outline}
\label{sec:outline}

Our analysis of directed spatial permutations can be viewed as an
implementation of the rough heuristic presented in
\Cref{sec:heuristic}. In fact our implementation is slightly
different, in that we establish versions of the conditions {[LC]},
{[M]} and {[D]} for large (but not macroscopic) pieces of cycles. The
fact that one can argue for $\mathsf{PD}(1)$ via mesoscopic versions
of these conditions should be a rather general fact, and we think
showing how this can be done is one of the main contributions of this
paper.

This section sketches our argument. At several steps in
the full proof we will condition on events of high
probability. We largely omit discussing such technical matters in the following.

\tikzset{
  on each segment/.style={
    decorate,
    decoration={
      show path construction,
      moveto code={},
      lineto code={
        \path [#1]
        (\tikzinputsegmentfirst) -- (\tikzinputsegmentlast);
      },
      curveto code={
        \path [#1] (\tikzinputsegmentfirst)
        .. controls
        (\tikzinputsegmentsupporta) and (\tikzinputsegmentsupportb)
        ..
        (\tikzinputsegmentlast);
      },
      closepath code={
        \path [#1]
        (\tikzinputsegmentfirst) -- (\tikzinputsegmentlast);
      },
    },
  },
  mid arrow/.style={postaction={decorate,decoration={
        markings,
        mark=at position .5 with {\arrow[#1]{Stealth[scale=1.5]}}
      }}},
  mid darrow/.style={postaction={decorate,decoration={
        markings,
        mark=at position .5 with {\arrow[#1]{Stealth[scale=1.5]Stealth[scale=1.5]}}
      }}},
  mid tarrow/.style={postaction={decorate,decoration={
        markings,
        mark=at position .5 with {\arrow[#1]{Stealth[scale=1.5]Stealth[scale=1.5]Stealth[scale=1.5]}}
      }}},
}

\begin{figure}[h]
  \centering
  \begin{tikzpicture}[scale=.8]
    \draw[black, very thick] (0,0) rectangle (13,10);
    \draw[black,|-|] (-.5,0) -- (-.5,10);
    \node[fill=white] at (-.5,5) {$m$};
    \draw[black,|-|] (0,-.5) -- (10,-.5);
    \node[fill=white] at (5,-.5) {$m$};    
    \draw[black,|-|] (10,-.5) -- (13,-.5);
    \node[fill=white] at (11.5,-.5) {$C(m)$};

    \path [draw=black,postaction={on each segment={mid arrow}}]
    (0,0) -- (10,10)
    (10,0) -- (13,3);
    \path [draw=black,postaction={on each segment={mid darrow}}]
    (0,3) -- (7,10)
    (7,0) -- (13,6);
    \path [draw=black,postaction={on each segment={mid tarrow}}]
    (0,6) -- (4,10)
    (4,0) -- (8.5,4.5);
    \path [draw=black,thick,dotted]
    (8.5,4.5) -- (8.75,4.75);
     \path [draw=black,dashed,postaction={on each segment={mid arrow}}]
    (0,1.5) -- (8.5,10)
    (8.5,0) -- (13,4.5);
    \path [draw=black,dashed,postaction={on each segment={mid darrow}}]
    (0,4.5) -- (5.5,10)
    (5.5,0) -- (13,7.5);
    \path [draw=black,dashed,postaction={on each segment={mid tarrow}}]
    (0,7.5) -- (2.5,10)
    (2.5,0) -- (6.9,4.4);
    \path [draw=black,thick,dotted]
    (6.9,4.4) -- (7.15,4.65);
  \end{tikzpicture}
  \caption{Plot of the expected location of portions of two
    cycles (solid and dashed). Horizontal traversals of the system, called strands in the
    main text, share the same number of arrows. After each strand the expected
    vertical displacement is $C(m)$. It takes $m/C(m)$ strands for the
    expected vertical displacement to be zero.}
  \label{fig:displ}
\end{figure}

\textbf{First considerations and dynamics {[D]}.}  In
\Cref{sec:HC-Rep} we examine the typical structure of the random
bijections $\Phi$. The key observation is that the correlation
structure induced by conditioning the random map $\tilde\Phi$ to be a
bijection is fairly simple. First, correlations only exist within each
column of $\T_{n,m}$, as the condition to be a bijection is an
independent condition on each column. By column we mean the set of
vertices with fixed horizontal coordinate (see
\Cref{sec:notation}). Second, the correlations within each column can
be explicitly described on an event of high probability: there is an
explicit description of $\P\cb{ \phi_{\V{x}}=1, \V{x}\in A}$ for a
subset $A$ of vertices in a column. These probabilities give a
complete description of $(\phi_{\V{x}})_{\V{x}\in \T_{n,m}}$ since the
variables form a bijection.  The explicit description is in terms of
the hard-core model from statistical physics.

There are two important consequences of these observations. First, the
correlations in each column are rapidly (exponentially) decaying. This
is a well-known fact about the hard-core model on a one-dimensional
graph.  Second, we can view the underlying randomness as arising from
the hard-core model description. Doing so gives an explicit
dynamics (Glauber dynamics for the hard-core model) that preserves the
law of $\Phi$. We use these dynamics to implement {[D]} from
\Cref{sec:heuristic}.

\textbf{Cycles are not small, and a mesoscopic interpretation of large
  cycles {[LC]}}.  In \Cref{sec:First-Properties} we start to explore
the cycles of $\Phi$ by making the observation that the process of
revealing a cycle initially has the law of a simple random walk. More
precisely, c.f.\ \eqref{eq:Bij-Def}, the first $n$ vertices of a given
cycle have the law of a simple random walk with step distribution
$1+\phi$.  For tori $\T_{n,m}$ with suitable aspect ratios this
implies that cycles cannot be too small. This is the first place where
our assumption on $C(m)$ plays a role.

In more detail, the first $n$ steps are those of a random walk with
drift in the vertical direction. Our choice of $C(m)$ ensures that the
first time a cycle returns to a column it is (with high probability)
at vertical distance of order $C(m)$ from where it started. We may
repeat this argument many times via a union bound, and this leads to a
formalization of the idea that cycles are not too small in terms of
\emph{global traversals}. Roughly speaking, global traversals are
mesoscopic objects that pass through the whole torus
\emph{vertically}, i.e., we follow a cycle until the first time its
vertical displacement in a column is zero. This occurs after roughly
$m/C(m)$ horizontal traversals of the system, see \Cref{fig:displ}. A
precise definition of global traversals is given in
\Cref{sec:First-Properties}; the preceding discussion neglects some
technical aspects of the definition we use in practice. The essential
point is that our assumption on $C(m)$ gives enough \emph{a priori}
control to deduce that each cycle will contain at least one global
traversal. Note that this establishes {[LC]} on a mesoscopic (as
opposed to macroscopic) scale --- the macroscopic scale requires
controlling order $m$ (as opposed to $m/C(m)$) horizontal traversals.

\textbf{Concentration of contacts and the mixing condition {[M]}}.  In
\Cref{sec:Contacts} we leverage the geometry of global traversals to
establish a form of {[M]} at a mesoscopic scale. This is done by
showing that two global traversals encounter one another
(\emph{contact} one another) rather often, and in such a manner that
the number of contacts between two global traversals is a concentrated
random variable. Since the cycles of $\Phi$ are comprised of global
traversals, this means that the cycles of $\Phi$ are well-mixed.

To understand this, consider Figure~\ref{fig:displ}. Given one global
traversal, any other global traversal starts in one of the
``corridors'' determined by the first. Ignoring correlations, the
distance between the two global traversals evolves like a random walk
with increments given by $X+X'$, $X$ and $X'$ distributed as
$\phi$. Due to the corridor structure, this distance between global
traversals should be thought of as being a random walk run on an cycle
of length $\Theta(C(m))$ for approximately $m^{2}/C(m)$ steps. The
number of contacts is the number of times this walk hits zero, and
this is a concentrated random variable. Moreover, it is independent of
the pair of global traversals chosen.

A precise formulation of contacts and concentration is given in
\Cref{sec:Contacts-Defn}. The (exponentially decaying) correlations
that were neglected in the discussion above can be taken into account
without significantly altering the conclusion. The details of this
occupy \Cref{sec:IGC,sec:tbd}; they play no role in the sequel.

\textbf{From {[LC]}, {[M]}, and {[D]} to $\mathsf{PD}(1)$}. In
\Cref{sec:dynamics} we consider the Glauber dynamics which leave
$\Phi$ invariant, and use them to establish \Cref{thm:PD1}. There are
two steps. First, in \Cref{sec:GD-Bijections} we note that under these
dynamics cycles split and merge with one another when $\Phi$ is updated
at a contact between global traversals. Moreover, the dynamics choose
contacts to update uniformly at random.  Since the number
of contacts between any pair of global traversals is concentrated
around the same value, this implies the dynamics are
\emph{effectively} a size-biased split-merge dynamics on the cycles of
$\Phi$.

We know the invariant distribution of random transpositions has
$\mathsf{PD}(1)$ statistics. Our invariant distribution is not that of
the random transposition chain, though, and so we must argue
differently. We do this by utilizing work of Schramm that explained
how size-biased split-merge dynamics lead to $\mathsf{PD}(1)$
\emph{well before} equilibrium is reached in the random transposition
context~\cite{Schramm2011}. We show in \Cref{sec:cycles} that the
errors hidden in the word ``effectively'' in the previous paragraph do
not cause any difficulties in implementing Schramm's argument. The
underlying reason for this is that the distribution of the large
cycles equilibrates extremely rapidly. This key observation, due to
Schramm (see also~\cite{IoffeToth}), indicates that an effective
split-merge dynamics with a low enough error rate leads to the same
distribution as true split-merge.

\subsection{Future Directions}
\label{sec:further}

We have considered directed spatial permutations on asymmetric tori
$\T_{n,m}$ with $n=m+C(m)$ with $C(m)=\ceil{C'\sqrt{m\log m}}$, $C'$
large enough. It is natural to ask what happens for general values of
$C(m)$. While our arguments certainly work for somewhat larger values
of $C(m)$ (i.e., replacing $\sqrt{m\log m}$ with $m^{\frac12+a}$,
$a>0$ not too large), new ideas will be needed at some point. For
example, $C(m)=m$ is essentially the same situation as for $C(m)=0$,
and the \emph{a priori} estimates we use to establish that cycles are
not too small break down. It is with these \emph{a priori} estimates
in mind that we chose the scale $\sqrt{m\log m}$. It seems possible
that our restrictions that the constant $C'$ be large enough could be
weakened at the expense of more technical arguments.

In the setting of a general choice of $C(m)$ it is not entirely clear
what the correct scaling of the cycle structure is nor what the
limiting distribution is. It is at least imaginable that there could
be an analogue of the rational resonances phenomenon found
in~\cite{HammondKenyon}. Note that if $C(m) \gg m^{2}$ then one will
obtain a version of Theorem~\ref{thm:PD1} (the first $n$ steps of a
cycle will be a well-mixed random walk on the $m$-cycle), but if
$n=m+C(m)$ is constant, then one will not. It would be interesting to
develop a more complete understanding of directed spatial permutations
as $C(m)$ varies.

Our results concern the equilibrium distribution of directed spatial
permutations on $\T_{n,m}$, and the proof uses a natural dynamics that
preserve the equilibrium distribution. It seems natural to ask about
the effect of these dynamics started away from equilibrium, in analogy
with Schramm's proof of Aldous's conjecture that $\mathsf{PD}(1)$
emerges in the random transposition model well before the mixing time
occurs~\cite{Schramm2011}. E.g., if one starts from $\phi_{\V{x}}=0$
for all $\V{x}$ and runs the Glauber dynamics underlying our proof, do
$\mathsf{PD}(1)$ statistics arise prior to the mixing time of the
chain?

Our arguments have made use of the geometric structure of $\T_{n,m}$
to give relatively simple arguments for the mixing of large cycles in
the system. Applying our ideas in other settings would be very
interesting. One possibility is to study models of directed spatial
permutations in higher dimensions. Another is to study the properties
of somewhat different models, e.g., random mappings (as opposed to
bijections). Finally, there are well-known models like the random
stirring model or loop representations of quantum spin systems,
see~\cite{GUW}.

\subsection{Conventions and Notation}
\label{sec:notation}

We write $\nwithoutzero=\{1,2,3,\dots\}$ and
$\nwithzero=\{0,1,2,\dots\}$. Discrete intervals are denoted
$\ccb{a,b} = \{a,a+1,\dots, b-1,b\}$ for $a<b\in \nwithzero$, and we
abbreviate $\ccb{b} = \ccb{0,b}$. For typographic convenience we
sometimes write $a\wedge b$ for $\min\{a,b\}$. We use standard
asymptotic notation: $f(x)=O(g(x))$ if there exists a $C>0$ such that
$|f(x)|\leq C g(x)$, and $f(x) = \Theta(g(x))$ if $f(x)=O(g(x))$ and
$g(x)=O(f(x))$.

Let $\pi_{i}$ denote the projection from $\T_{n,m}$ to the
$i$\textsuperscript{th} coordinate, i.e., $\pi_{i}(\V{x})=x_{i}$ for
$i=1,2$.  The \emph{$j$\textsuperscript{th} column} of $\T_{n,m}$ is
$\pi_{1}^{-1}(j)$ for $j\in \ccb{n}$; the
\emph{$j$\textsuperscript{th} row} is $\pi_{2}^{-1}(j)$,
$0\leq j\leq m-1$.

A \emph{geometric random variable $W$ with success probability $p$}
satisfies $\P\cb{W=n}=(1-p)^{n-1}p$, i.e., $W$ is the trial on which
success occurs. A random variable $X$ is \emph{sub-exponential} if
there is a $c>0$ such that $\P\cb{|X|\geq t}\leq 2\exp\{-ct\}$ for all
$t\geq 0$.

In many places our arguments will require that the parameter $m$
controlling the size of $\T_{n,m}$ is large enough. We do not write
this explicitly in our hypotheses.

\section{The typical law of $\Phi$}
\label{sec:HC-Rep}

The collection of IID random variables
$(\tilde \phi_{\V{x}})_{\V{x}\in \T_{n,m}}$ define a map
$\tilde{\Phi}$ by
$\tilde{\Phi}(\V{x})=(x_{1}+1, x_{2}+\tilde \phi_{\V{x}}+1)$. Recall
that $\tilde\phi$ are symmetric on $\{-1,0,1\}$, that we assume
$\phi(1)=a\in\ob{0,\frac12}$, and that $\Phi$ denotes $\tilde\Phi$
conditioned to be a bijection. We similarly write
$(\phi_{\V{x}})_{\V{x}\in \T_{n,m}}$ to denote
$(\tilde\phi_{\V{x}})_{\V{x}\in \T_{n,m}}$ conditioned on
$\tilde{\Phi}$ being a bijection.  This conditioning induces
correlations between the $\phi_{\V{x}}$. This section describes these
correlations. More precisely, by further conditioning on an event that
occurs with high probability we will obtain an explicit description of
the resulting correlation structure.

\subsection{No global shifts}
\label{sec:No-shifts}

By a slight abuse of notation, for $j\in\ccb{n}$ we will write
$\tilde \Phi_{j}$ for the $y$-coordinate of the restriction of
$\tilde \Phi$ to the $j$th column, i.e.,
$\tilde \Phi_{j}(k) = k+\phi_{(j,k)}+1$ records the $y$-coordinate of
$\tilde \Phi((j,k))$. We think of $\tilde \Phi_{j}$ as a map from
$C_{m}$ to $C_{m}$.
If $\tilde \Phi$ is a bijection of $\T_{n,m}$, then $\tilde \Phi_{j}$ induces a
bijection of $C_{m}$ for all $j\in\ccb{n}$. Writing $\Phi_{j}$ for the
restriction of $\Phi$ to column $j$, note that $\Phi_{j}$ and
$\Phi_{k}$ are independent for $j\neq k$, as the condition that
$\tilde{\Phi}$ is a bijection is a separate condition for each
column.

\begin{definition}
  The map $\Phi_{j}$ is a \emph{global up shift} if $\phi_{(j,k)}=1$ for all
  $k\in C_{m}$. Column $j$ is a \emph{global down shift} if
  $\phi_{(j,k)}=-1$ for all $k\in C_{m}$.
\end{definition}

Having no global shifts \emph{and} knowing that $\Phi_{j}$ is a
bijection constrains $\Phi_{j}$ severely.

\begin{lemma}
  \label{lem:jump}
  Suppose $\Phi_{j}$ is not a global shift. If
  $\phi_{(j,k)} =  1$, then $\phi_{(j,k+1)} = - 1$. If
  $\phi_{(j,k)} = -1$, then $\phi_{(j,k-1)} = 1$.
\end{lemma}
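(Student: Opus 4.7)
The plan is to analyze the bijectivity constraint locally, one column at a time, exploiting the fact that $\tilde\Phi_j(k)=k+1+\phi_{(j,k)}$ forces every vertex to map into $\{k,k+1,k+2\}$ in $C_m$. Since $\Phi_j$ is a bijection of $C_m$, each target must be hit exactly once; this gives strong constraints on consecutive values of $\phi_{(j,\cdot)}$.

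First, suppose $\phi_{(j,k)}=1$, so $k\mapsto k+2$. Compute $\tilde\Phi_j(k+1)=(k+1)+1+\phi_{(j,k+1)}=k+2+\phi_{(j,k+1)}$. The value $\phi_{(j,k+1)}=0$ would send $k+1\mapsto k+2$, colliding with $k$, so bijectivity rules this out. Hence $\phi_{(j,k+1)}\in\{-1,1\}$, and the task is to exclude $\phi_{(j,k+1)}=1$ under the no-global-shift hypothesis. I would do this by propagation: if $\phi_{(j,k)}=\phi_{(j,k+1)}=1$, then $k\mapsto k+2$ and $k+1\mapsto k+3$. Examining $\tilde\Phi_j(k+2)=k+3+\phi_{(j,k+2)}$, the values $\phi_{(j,k+2)}\in\{-1,0\}$ each produce a collision (with $k$ or $k+1$, respectively), forcing $\phi_{(j,k+2)}=1$. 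By induction on the cyclic order (walking around $C_m$), $\phi_{(j,k+i)}=1$ for every $i$, contradicting the assumption that $\Phi_j$ is not a global up shift. Therefore $\phi_{(j,k+1)}=-1$.

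The case $\phi_{(j,k)}=-1$ is entirely symmetric: now $k\mapsto k$, so examining $\tilde\Phi_j(k-1)=k+\phi_{(j,k-1)}$ rules out $\phi_{(j,k-1)}=0$ (collision at $k$) and leaves $\phi_{(j,k-1)}\in\{-1,1\}$. A parallel propagation argument shows that $\phi_{(j,k-1)}=-1$ would force $\phi_{(j,k-i)}=-1$ for every $i$, producing a global down shift and contradicting the hypothesis. Hence $\phi_{(j,k-1)}=1$.

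There is no real obstacle here; the only subtlety is that $C_m$ is cyclic, so the ``propagate to infinity'' induction really wraps around the column, but that is exactly what produces a global shift and triggers the contradiction. I would organize the write-up as two short symmetric cases, noting the three-element image set and ruling out the forbidden $\phi$-values by direct inspection of potential collisions.
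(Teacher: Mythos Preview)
Your proposal is correct and follows essentially the same approach as the paper: rule out $\phi_{(j,k+1)}=0$ by a direct collision, then show $\phi_{(j,k+1)}=1$ propagates around the cycle to force a global shift, contradicting the hypothesis. Your write-up is in fact more explicit than the paper's, which omits the collision computations and only sketches the induction.
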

\begin{proof}
  The proofs being similar, we derive only the first statement.
  Suppose that $\phi_{(j,k)} =1$ for some $k\in \ccb{n}$. We can make
  the following inferences from $\Phi_{j}$ being a bijection.  The
  value $\phi_{(j,k+1)}$ cannot be zero.  If $\phi_{(j,k+1)}$ equals
  one, then $\phi_{(j,k+2)}$ must also be one.  By induction,
  $\phi_{(j,\ell)}=1$ for all $\ell\in \ccb{n}$, so that $\phi_{j}$ is
  a global shift, contrary to hypothesis. Thus, $\phi_{(j,k)} = - 1$.
\end{proof}

It will be convenient to think of each $\phi_{(j,k)}$ as representing
an \emph{arrow} that points up (value $1$), down (value $-1$), or
horizontally (value $0$). We say the arrows $\phi_{(j,k)}$ and
$\phi_{(j,k+1)}$ \emph{swap} if $\phi_{(j,k)}$ points up and
$\phi_{(j,k+1)}$ points down. They are \emph{parallel} if they are
both horizontal.

\begin{lemma}
  \label{lem:No-Shift}
  Under $\Phi$, the probability of a global shift occurring is at most
  $ne^{-cm}$ for some $c=c(a)>0$.
\end{lemma}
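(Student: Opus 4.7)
The plan is to reduce to a single-column bound by column independence, and then to lower-bound the partition function of bijective column configurations using \Cref{lem:jump}.

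Columns of $\Phi$ are independent, since the event that $\tilde\Phi$ is a bijection factorizes over columns and the $\tilde\phi_{\V x}$ are independent across columns. By a union bound it therefore suffices to show $\P\cb{\Phi_0\text{ is a global shift}} \le e^{-cm}$. Writing $Z_m(a) := \P\cb{\tilde\Phi_0\text{ is a bijection}}$ and noting that the two global shifts (all $+1$, all $-1$) are automatically bijections and each carry unconditional weight $a^m$,
$$\P\cb{\Phi_0\text{ is a global shift}} = \frac{2 a^m}{Z_m(a)}.$$
The task reduces to producing a matching lower bound on $Z_m(a)$.

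By \Cref{lem:jump}, every non-shift bijection of column $0$ corresponds to a configuration on $C_m$ in which each $+1$ is immediately followed by a $-1$, so the set of $\pm 1$ positions forms a matching of $C_m$ (viewing each "UD-pair" as an edge between two consecutive sites). A matching of size $k$ has weight $a^{2k}(1-2a)^{m-2k}$, so with $N(k)$ denoting the number of such matchings,
$$Z_m(a) \ge \sum_{k=0}^{\floor{m/2}} N(k)\, a^{2k}(1-2a)^{m-2k},$$
which is the monomer-dimer partition function on $C_m$. A standard transfer-matrix computation gives $Z_m(a) \ge c_0\, \mu(a)^m$ with
$$\mu(a) := \tfrac{1}{2}\bigl((1-2a) + \sqrt{(1-2a)^2 + 4a^2}\bigr).$$

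It remains to verify that $\mu(a) > a$ for every $a \in (0, 1/2)$. This is equivalent to $\sqrt{(1-2a)^2 + 4a^2} > 4a - 1$; the right side is non-positive for $a \le 1/4$, and for $a \in (1/4, 1/2)$ both sides are positive and squaring reduces the inequality to $a < 1/2$. Setting $c := \log(\mu(a)/a) > 0$ and combining with the union bound yields $\P\cb{\text{some column is a global shift}} \le 2 c_0^{-1} n\, e^{-cm}$, which is at most $n\, e^{-c'm}$ for any $c' < c$ once $m$ is sufficiently large. The main obstacle is the regime $a$ close to $1/2$: there the naive lower bound $Z_m(a) \ge (1-2a)^m$ is weaker than $a^m$, and one genuinely needs the combinatorial entropy of mixed tilings (equivalently, the non-trivial Perron eigenvalue $\mu(a)$) to produce the required exponential gap. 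The remaining ingredients---column independence, the enumeration via \Cref{lem:jump}, and the transfer-matrix bound---are routine.
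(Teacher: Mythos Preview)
Your proof is correct and follows the same high-level structure as the paper: reduce to a single column by independence, write the conditional probability as $2a^{m}/Z_{m}(a)$, and lower-bound $Z_{m}(a)$ to produce the exponential gap. The difference lies in how the lower bound on $Z_{m}(a)$ is obtained. You invoke \Cref{lem:jump} to identify non-shift bijections with monomer--dimer configurations on $C_{m}$, then compute the full partition function via the transfer matrix, obtaining the Perron eigenvalue $\mu(a)$ and checking $\mu(a)>a$. The paper instead takes a more elementary shortcut: for even $m$, it fixes one of the two partitions of the column into $m/2$ adjacent pairs and observes that any configuration in which each pair is either parallel or a swap is already a bijection, yielding $Z_{m}(a)\geq 2\bigl((1-2a)^{2}+a^{2}\bigr)^{m/2}$. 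Since $(1-2a)^{2}+a^{2}>a^{2}$ trivially for $a<1/2$, this suffices without any appeal to \Cref{lem:jump} or to spectral computations. Your route gives a sharper constant $c$ (you are counting all matchings, not just the aligned ones), at the cost of somewhat heavier machinery; the paper's route is shorter and makes the verification of the key inequality immediate.
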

\begin{proof}
  Assume $m$ is even; a similar argument applies to $n$ odd. The
  probability that the unconditioned map $\tilde\Phi$ has a global
  shift in column $j$ is $2a^{m} = 2(a^{2})^{m/2}$, where we recall
  $a=\phi(1)\in \ob{0,\frac12}$.  Consider the two partitions of the
  vertices in $\pi_{1}^{-1}(j)$ into adjacent pairs
  $\{(j,k),(j,k+1)\}$. For each choice of if these pairs of arrows
  swap or are parallel the resulting map is a bijection. Hence the
  probability of a bijection under $\tilde\Phi$ is at least
  $2((1-2a)^{2}+a^{2})^{m/2}$. This proves that conditionally on being
  a bijection, the probability of a global shift in column $j$ is
  exponentially decaying in $m$ if $0<a<1/2$. The lemma follows by a
  union bound over the $n$ columns of $\T_{n,m}$.
\end{proof}

\subsection{The underlying hard-core model}
\label{sec:HC}

We recall the definition of the hard-core model, which we will
shortly see is connected with the bijection $\Phi$. An
\emph{independent set $I\subset V$} of a graph $G=(V,E)$ is a subset
of vertices such that $u,v\in I$ implies that the edge $\{u,v\}$ is
not in $E$. Write $\cc I$ for the set of independent sets of $G$. We
sometimes identify elements of $\cc I$ with vectors
$\sigma = (\sigma_{x})_{x\in V}\in \{0,1\}^{V}$, with $\sigma_{x}=1$
representing that $x$ is in the independent set.
\begin{definition}
  The \emph{hard-core model} with \emph{activity} $\lambda>0$ on a graph
  $G = (V,E)$ is the probability measure $\P$ on $2^{V}$ defined by
  \begin{equation}
    \label{eq:HCM}
    \P\cb{A} \propto \indicthat{A\in \cc I} \lambda^{\abs{A}},
  \end{equation}
  where $\abs{A}$ denotes the cardinality of $A\subset V$.
\end{definition}

\begin{figure}[h]
  \centering
  \begin{tikzpicture}[dualnode/.style={circle,draw,fill=black!80,
      inner sep = 1.4pt}, scale=1.5]
    \draw[step=1,gray,dashed] (0,1) grid (6,5);
    \foreach \x in {0,...,6}{
      \foreach \y in {1,...,5}{
        \node[circle,draw,fill=gray!20,color=gray,inner sep = 1.2pt] () at (\x,\y) {};}}
    \foreach \x in {0,...,6}{
      \foreach \y in {1,...,4}{
        \node [dualnode]  (\x\y) at (\x,\y+.5) {};}
      \foreach \y in {1,...,3}{
        \draw[black, thick] (\x,\y+.5) to [bend left] (\x,1.5+\y);}}
    \foreach \x in {0,...,5}{
      \foreach \y in {1,...,4}{
        \draw[black,thick] (\x,\y+.5) to (\x+1,\y+.5);}}
    \node[] at (6,2.5) [right] {$(x_{1},x_{2})$};
    \node[] at (6,3) [right] {$(x_{1},x_{2})+\frac{1}{2}$};
    \node[] at (0,3) [left] {$\phantom{(x_{1},x_{2})+\frac{1}{2}}$};
  \end{tikzpicture}
  \caption{A subset of $\T_{n,m}$ is in grey, and a subset of
    $\T_{n,m}^{\star}$ is in black. The graph $\T _{n,m}^{\star}$ has vertices $v_{\V{x}} = ((x_{1},x_{2}),(x_{1},x_{2}+1)) $ given by
    the vertical edges of $\T_{n,m}$. Two vertices $v_{\V{x}}$,
    $v_{\V{x}'}$ form an edge in $\T_{n,m}^{\star}$ if either (i) $x_{1}=x_{1}'$
    and $x_{2}-x_{2}'\in \{\pm 1\}$ or (ii) $x_{2}=x_{2}'$ and $x_{1}-x_{1}'\in\{\pm 1\}$.}
  \label{fig:Dual}
\end{figure}

Let $\T^{\star}_{n,m} \cong C_{n}\times C_{m}^{\star}$ be the dual of
$\T_{n,m}$ (we write $C_{m}^{\star}\cong C_{m}$ for the dual of
$C_{m}$).  See \Cref{fig:Dual} and the accompanying caption for a
depiction and precise description.  It is convenient to identify the
vertices of $\T^{\star}_{n,m}$ with the midpoints of the vertical
edges of $\T_{n,m}$, and $\V{x}+\frac{1}{2}$ will denote the vertex
$((x_{1},x_{2}),(x_{1},x_{2}+1))\in \T_{n,m}^{\star}$ for
$\V{x}\in \T_{n,m}$. See \Cref{fig:Dual}.
Recall the definition of swapping arrows from
\Cref{sec:No-shifts}. It is convenient to extend this terminology to
the vertices of $\T^{\star}_{n,m}$.
\begin{definition}
  A \emph{swap} occurs on the edge $\V{x} + \frac{1}{2}$ if
  $\big( \phi_{x_{1}}(x_{2}) , \phi_{x_{1}}(x_{2} +1) \big)$ equals $(1,-1)$.
\end{definition}

\begin{figure}[h]
  \centering
  \begin{tikzpicture}
          \draw[gray,dashed] (0,0) -- (0,5);
      \draw[gray,dashed] (1,0) -- (1,5);
      \draw[gray,dashed] (2,0) -- (2,5);
      \draw[gray,dashed] (3,0) -- (3,5);
      
      \draw[gray,dashed] (0,0) -- (3,0);
      \draw[gray,dashed] (0,1) -- (3,1);
      \draw[gray,dashed] (0,2) -- (3,2);
      \draw[gray,dashed] (0,3) -- (3,3);
      \draw[gray,dashed] (0,4) -- (3,4);
      \draw[gray,dashed] (0,5) -- (3,5);

    \draw[gray,dotted] (0,-.5) -- (0,0);
    \draw[gray,dotted] (0,5) -- (0,5.5);  
    \draw[gray,dotted] (1,-.5) -- (1,0);
    \draw[gray,dotted] (1,5) -- (1,5.5);
    \draw[gray,dotted] (2,-.5) -- (2,0);
    \draw[gray,dotted] (2,5) -- (2,5.5);
    \draw[gray,dotted] (3,-.5) -- (3,0);
    \draw[gray,dotted] (3,5) -- (3,5.5);

    \draw[gray,dotted] (-.5,0) -- (0,0);
    \draw[gray,dotted] (3,0) -- (3.5,0);
    \draw[gray,dotted] (-.5,1) -- (0,1);
    \draw[gray,dotted] (3,1) -- (3.5,1);
    \draw[gray,dotted] (-.5,2) -- (0,2);
    \draw[gray,dotted] (3,2) -- (3.5,2);
    \draw[gray,dotted] (-.5,3) -- (0,3);
    \draw[gray,dotted] (3,3) -- (3.5,3);
    \draw[gray,dotted] (-.5,4) -- (0,4);
    \draw[gray,dotted] (3,4) -- (3.5,4);
    \draw[gray,dotted] (-.5,5) -- (0,5);
    \draw[gray,dotted] (3,5) -- (3.5,5);
    
    \node[circle,draw,fill=gray!20,color=gray,inner sep = 1.2pt] () at
    (0,0) {};
    \node[circle,draw,fill=gray!20,color=gray,inner sep = 1.2pt]
    () at (0,1) {};
    \node[circle,draw,fill=gray!20,color=gray,inner sep = 1.2pt] () at
    (0,2) {};
    \node[circle,draw,fill=gray!20,color=gray,inner sep = 1.2pt]
    () at (0,3) {};
    \node[circle,draw,fill=gray!20,color=gray,inner sep = 1.2pt] () at
    (0,4) {};
    \node[circle,draw,fill=gray!20,color=gray,inner sep = 1.2pt]
    () at (0,5) {};
    
    \node[circle,draw,fill=gray!20,color=gray,inner sep = 1.2pt] () at
    (1,0) {};
    \node[circle,draw,fill=gray!20,color=gray,inner sep = 1.2pt]
    () at (1,1) {};
    \node[circle,draw,fill=gray!20,color=gray,inner sep = 1.2pt] () at
    (1,2) {};
    \node[circle,draw,fill=gray!20,color=gray,inner sep = 1.2pt]
    () at (1,3) {};
    \node[circle,draw,fill=gray!20,color=gray,inner sep = 1.2pt] () at
    (1,4) {};
    \node[circle,draw,fill=gray!20,color=gray,inner sep = 1.2pt]
    () at (1,5) {};

    \node[circle,draw,fill=gray!20,color=gray,inner sep = 1.2pt] () at
    (2,0) {};
    \node[circle,draw,fill=gray!20,color=gray,inner sep = 1.2pt]
    () at (2,1) {};
    \node[circle,draw,fill=gray!20,color=gray,inner sep = 1.2pt] () at
    (2,2) {};
    \node[circle,draw,fill=gray!20,color=gray,inner sep = 1.2pt]
    () at (2,3) {};
    \node[circle,draw,fill=gray!20,color=gray,inner sep = 1.2pt] () at
    (2,4) {};
    \node[circle,draw,fill=gray!20,color=gray,inner sep = 1.2pt]
    () at (2,5) {};
    
    \draw[gray,->] (0,0) -- (.9,0);
    \draw[gray,->] (0,1) -- (.9,1);    
    \draw[gray,->] (0,2) -- (.9,2);    
    \draw[gray,->] (0,3) -- (.9,3);    
    \draw[gray,->] (0,4) -- (.9,5);    
    \draw[gray,->] (0,5) -- (.9,4);

    \draw[gray,->] (1,0) -- (1.9,0);
    \draw[gray,->] (1,1) -- (1.9,2);    
    \draw[gray,->] (1,2) -- (1.9,1);    
    \draw[gray,->] (1,3) -- (1.9,3);    
    \draw[gray,->] (1,4) -- (1.9,4);    
    \draw[gray,->] (1,5) -- (1.9,5); 

    \draw[gray,->] (2,0) -- (2.9,0);
    \draw[gray,->] (2,1) -- (2.9,1);    
    \draw[gray,->] (2,2) -- (2.9,2);    
    \draw[gray,->] (2,3) -- (2.9,4);    
    \draw[gray,->] (2,4) -- (2.9,3);    
    \draw[gray,->] (2,5) -- (2.9,5);

    \node[circle,draw,fill=black!20, inner sep = 2pt] () at (0,0.5) {};
    \node[circle,draw,fill=black!20, inner sep = 2pt] () at (0,1.5) {};
    \node[circle,draw,fill=black!20, inner sep = 2pt] () at (0,2.5) {};
    \node[circle,draw,fill=black!20, inner sep = 2pt] () at (0,3.5) {};
    \node[circle,draw,fill=black!80, inner sep = 2pt] () at (0,4.5) {};

    \node[circle,draw,fill=black!20, inner sep = 2pt] () at (1,0.5) {};
    \node[circle,draw,fill=black!80, inner sep = 2pt] () at (1,1.5) {};
    \node[circle,draw,fill=black!20, inner sep = 2pt] () at (1,2.5) {};
    \node[circle,draw,fill=black!20, inner sep = 2pt] () at (1,3.5) {};
    \node[circle,draw,fill=black!20, inner sep = 2pt] () at (1,4.5) {};

    \node[circle,draw,fill=black!20, inner sep = 2pt] () at (2,0.5) {};
    \node[circle,draw,fill=black!20, inner sep = 2pt] () at (2,1.5) {};
    \node[circle,draw,fill=black!20, inner sep = 2pt] () at (2,2.5) {};
    \node[circle,draw,fill=black!80, inner sep = 2pt] () at (2,3.5) {};
    \node[circle,draw,fill=black!20, inner sep = 2pt] () at (2,4.5) {};
  \end{tikzpicture}
  \qquad\qquad\qquad
    \begin{tikzpicture}
      \draw[gray,dashed] (0,0) -- (0,5);
      \draw[gray,dashed] (1,0) -- (1,5);
      \draw[gray,dashed] (2,0) -- (2,5);
      \draw[gray,dashed] (3,0) -- (3,5);
      
      \draw[gray,dashed] (0,0) -- (3,0);
      \draw[gray,dashed] (0,1) -- (3,1);
      \draw[gray,dashed] (0,2) -- (3,2);
      \draw[gray,dashed] (0,3) -- (3,3);
      \draw[gray,dashed] (0,4) -- (3,4);
      \draw[gray,dashed] (0,5) -- (3,5);

    \draw[gray,dotted] (0,-.5) -- (0,0);
    \draw[gray,dotted] (0,5) -- (0,5.5);  
    \draw[gray,dotted] (1,-.5) -- (1,0);
    \draw[gray,dotted] (1,5) -- (1,5.5);
    \draw[gray,dotted] (2,-.5) -- (2,0);
    \draw[gray,dotted] (2,5) -- (2,5.5);
    \draw[gray,dotted] (3,-.5) -- (3,0);
    \draw[gray,dotted] (3,5) -- (3,5.5);

    \draw[gray,dotted] (-.5,0) -- (0,0);
    \draw[gray,dotted] (3,0) -- (3.5,0);
    \draw[gray,dotted] (-.5,1) -- (0,1);
    \draw[gray,dotted] (3,1) -- (3.5,1);
    \draw[gray,dotted] (-.5,2) -- (0,2);
    \draw[gray,dotted] (3,2) -- (3.5,2);
    \draw[gray,dotted] (-.5,3) -- (0,3);
    \draw[gray,dotted] (3,3) -- (3.5,3);
    \draw[gray,dotted] (-.5,4) -- (0,4);
    \draw[gray,dotted] (3,4) -- (3.5,4);
    \draw[gray,dotted] (-.5,5) -- (0,5);
    \draw[gray,dotted] (3,5) -- (3.5,5);

    \node[circle,draw,fill=gray!20,color=gray,inner sep = 1.2pt] () at
    (0,0) {};
    \node[circle,draw,fill=gray!20,color=gray,inner sep = 1.2pt]
    () at (0,1) {};
    \node[circle,draw,fill=gray!20,color=gray,inner sep = 1.2pt] () at
    (0,2) {};
    \node[circle,draw,fill=gray!20,color=gray,inner sep = 1.2pt]
    () at (0,3) {};
    \node[circle,draw,fill=gray!20,color=gray,inner sep = 1.2pt] () at
    (0,4) {};
    \node[circle,draw,fill=gray!20,color=gray,inner sep = 1.2pt]
    () at (0,5) {};
    
    \node[circle,draw,fill=gray!20,color=gray,inner sep = 1.2pt] () at
    (1,0) {};
    \node[circle,draw,fill=gray!20,color=gray,inner sep = 1.2pt]
    () at (1,1) {};
    \node[circle,draw,fill=gray!20,color=gray,inner sep = 1.2pt] () at
    (1,2) {};
    \node[circle,draw,fill=gray!20,color=gray,inner sep = 1.2pt]
    () at (1,3) {};
    \node[circle,draw,fill=gray!20,color=gray,inner sep = 1.2pt] () at
    (1,4) {};
    \node[circle,draw,fill=gray!20,color=gray,inner sep = 1.2pt]
    () at (1,5) {};

    \node[circle,draw,fill=gray!20,color=gray,inner sep = 1.2pt] () at
    (2,0) {};
    \node[circle,draw,fill=gray!20,color=gray,inner sep = 1.2pt]
    () at (2,1) {};
    \node[circle,draw,fill=gray!20,color=gray,inner sep = 1.2pt] () at
    (2,2) {};
    \node[circle,draw,fill=gray!20,color=gray,inner sep = 1.2pt]
    () at (2,3) {};
    \node[circle,draw,fill=gray!20,color=gray,inner sep = 1.2pt] () at
    (2,4) {};
    \node[circle,draw,fill=gray!20,color=gray,inner sep = 1.2pt]
    () at (2,5) {};

    \node[circle,draw,fill=gray!20,color=gray,inner sep = 1.2pt] () at
    (3,0) {};
    \node[circle,draw,fill=gray!20,color=gray,inner sep = 1.2pt]
    () at (3,1) {};
    \node[circle,draw,fill=gray!20,color=gray,inner sep = 1.2pt] () at
    (3,2) {};
    \node[circle,draw,fill=gray!20,color=gray,inner sep = 1.2pt]
    () at (3,3) {};
    \node[circle,draw,fill=gray!20,color=gray,inner sep = 1.2pt] () at
    (3,4) {};
    \node[circle,draw,fill=gray!20,color=gray,inner sep = 1.2pt]
    () at (3,5) {};
    
    \draw[gray,->] (0,0) -- (.9,0);
    \draw[gray,->] (0,1) -- (.9,1);    
    \draw[gray,->] (0,2) -- (.9,2);    
    \draw[gray,->] (0,3) -- (.9,3);    
    \draw[gray,->] (0,4) -- (.9,5);    
    \draw[gray,->] (0,5) -- (.9,4);

    \draw[gray,->] (1,0) -- (1.9,0);
    \draw[gray,->] (1,1) -- (1.9,1);    
    \draw[gray,->] (1,2) -- (1.9,2);    
    \draw[gray,->] (1,3) -- (1.9,3);    
    \draw[gray,->] (1,4) -- (1.9,4);    
    \draw[gray,->] (1,5) -- (1.9,5); 

    \draw[gray,->] (2,0) -- (2.9,0);
    \draw[gray,->] (2,1) -- (2.9,1);    
    \draw[gray,->] (2,2) -- (2.9,2);    
    \draw[gray,->] (2,3) -- (2.9,4);    
    \draw[gray,->] (2,4) -- (2.9,3);    
    \draw[gray,->] (2,5) -- (2.9,5);

    \node[circle,draw,fill=black!20, inner sep = 2pt] () at (0,0.5) {};
    \node[circle,draw,fill=black!20, inner sep = 2pt] () at (0,1.5) {};
    \node[circle,draw,fill=black!20, inner sep = 2pt] () at (0,2.5) {};
    \node[circle,draw,fill=black!20, inner sep = 2pt] () at (0,3.5) {};
    \node[circle,draw,fill=black!80, inner sep = 2pt] () at (0,4.5) {};

    \node[circle,draw,fill=black!20, inner sep = 2pt] () at (1,0.5) {};
    \node[circle,draw,fill=black!20, inner sep = 2pt] () at (1,1.5) {};
    \node[circle,draw,fill=black!20, inner sep = 2pt] () at (1,2.5) {};
    \node[circle,draw,fill=black!20, inner sep = 2pt] () at (1,3.5) {};
    \node[circle,draw,fill=black!20, inner sep = 2pt] () at (1,4.5) {};

    \node[circle,draw,fill=black!20, inner sep = 2pt] () at (2,0.5) {};
    \node[circle,draw,fill=black!20, inner sep = 2pt] () at (2,1.5) {};
    \node[circle,draw,fill=black!20, inner sep = 2pt] () at (2,2.5) {};
    \node[circle,draw,fill=black!80, inner sep = 2pt] () at (2,3.5) {};
    \node[circle,draw,fill=black!20, inner sep = 2pt] () at (2,4.5) {};
  \end{tikzpicture}
  \caption{Two illustrations of the relation between the hard-core
    model and the arrows (in grey). The occupied vertices in the hard-core
    model ($\sigma_{x}=1$) are depicted with large black circles, and
    the vacant vertices ($\sigma_{x}=0$) with large shaded
    circles. Small grey dots are vertices of $\T_{n,m}$. Only a subset
    of the hard-core and arrow configurations are depicted. The change in the occupation
    of a vertex between the left- and right-hand illustrations causes
    a split/merge. Note that the deterministic shifts by $+1$ in the
    $y$-coordinates in the definition of $\Phi$ are not illustrated.}
  \label{fig:HC-arrows}
\end{figure}

Given a bijection of $\T_{n,m}$, consider the subset of vertices of
$\T^{\star}_{n,m}$ at which a swap occurs on the corresponding edge of
$\T_{n,m}$.  This defines a map between bijections $\Phi$ of
$\T_{n,m}$ and subsets of $V(T^{\star}_{n,m})$.

\begin{proposition}
  \label{prop:HC-Rep}
  Consider the set of bijections $\Phi$ that do not contain a global
  shift. The induced law on subsets of $V(\T^{\star}_{n,m})$ is the
  product of independent hard-core model measures with activity
  $\lambda = \lambda(a)=a^{2}/(1-a)^{2}$ on each
  column $C_{m}^{\star}$ of $\T^{\star}_{n,m}$.
\end{proposition}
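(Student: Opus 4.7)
The plan is to exploit the fact that the bijectivity of $\tilde{\Phi}$ decouples across columns: writing $\tilde{\Phi}_{j}$ for the restriction to column $j$, the events that $\tilde{\Phi}_{j}$ is a bijection are independent across $j$, and the no-global-shift conditioning factors in the same way. Consequently, the column restrictions $\Phi_{j}$ are independent under the conditioning in the proposition, and it suffices to identify the law of a single $\Phi_{j}$ as the hard-core measure on the dual cycle $C^{\star}_{m}$.

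Fix a column $j$ and suppose $\Phi_{j}$ is a bijection that is not a global shift. By \Cref{lem:jump}, every $+1$ arrow in the column is immediately followed by a $-1$ arrow, and every $-1$ is immediately preceded by a $+1$. Hence the non-horizontal arrows partition into consecutive up--down pairs, i.e., swaps, and any arrow not involved in such a pair must be horizontal (otherwise the forced chain of implications from \Cref{lem:jump} propagates all the way around the cycle and produces a global shift). Translating to the dual graph, the set $S \subset V(C^{\star}_{m})$ of swap locations is automatically an independent set in $C^{\star}_{m}$: two adjacent dual vertices share a common primal arrow and would force contradictory values ($+1$ and $-1$) on it. I would then verify the converse: for any independent set $S$, placing swaps at the dual vertices in $S$ and horizontal arrows elsewhere produces a valid bijective $\Phi_{j}$. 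This establishes a one-to-one correspondence between admissible column configurations and $\cc I(C^{\star}_{m})$.

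The final step is a direct probability computation. The unconditional likelihood of the arrow configuration indexed by a given $S$ factors into one factor of $a$ for each up arrow, one factor of $a$ for each down arrow, and a factor of $1-2a$ for each of the $m - 2|S|$ horizontal arrows:
\begin{equation*}
  \P\cb{\text{configuration determined by } S} = a^{2|S|}(1-2a)^{m-2|S|}.
\end{equation*}
Normalizing by the ($S$-independent) probability of the bijection-plus-no-global-shift event and pulling out the common factor $(1-2a)^{m}$, the conditional law of $S$ is proportional to $\lambda^{|S|}$ restricted to independent sets, for an explicit $\lambda = \lambda(a)$ as in the statement. This is the hard-core measure on $C^{\star}_{m}$, and taking products over columns yields the proposition.

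No step here is analytically delicate; the main care needed is the combinatorial bookkeeping in the second paragraph, in particular verifying that every independent set $S$ yields a collision-free map $\tilde{\Phi}_{j}$. A short case analysis on the relative positions of consecutive arrows (horizontal versus part of a swap) suffices to rule out potential collisions in $\tilde{\Phi}_{j}$, and this mirrors the one-dimensional nature of the exclusion rule underlying the hard-core model on a cycle.
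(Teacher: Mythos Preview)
Your argument mirrors the paper's: reduce to a single column by independence, invoke \Cref{lem:jump} to identify non-global-shift bijections with independent sets on $C_m^\star$, and then read off the weight directly. One wrinkle worth noting: your (correct) factor $(1-2a)^{m-2|S|}$ actually yields $\lambda = a^2/(1-2a)^2$, not the $a^2/(1-a)^2$ in the statement; the paper's own proof writes $(1-a)$ in the analogous spot, which appears to be a typo consistent with the stated activity.
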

\begin{proof}
  By \Cref{lem:jump}, every vertex $\V{x}$ of $\T_{n,m}$ not assigned
  a horizontal arrow (i.e., $\phi_{\V{x}}\neq 0$) is part of exactly
  one swap.  Thus every configuration which is given non-zero
  probability is an independent set in each column $C_{m}^{\star}$ of
  $\T^{\star}_{n,m}$. Since any independent set corresponds to a
  possible set of swaps, every independent set occurs with positive
  probability.

  Because $\Phi_{j}$ and $\Phi_{k}$ are independent for
  $j\neq k$, it is enough to show that the law on a single
  column $C_{m}^{\star}$ is given by (\ref{eq:HCM}). To this end, note that,
  for $A\subset V(C_{m}^{\star})$ an independent set,
  \begin{equation*}
    \P \cb{A} \propto a^{2\abs{A}}(1-a)^{n-2\abs{A}} 
    \propto \ob{ \frac{a^{2}}{(1-a)^{2}}}^{\abs{A}} \, . \qedhere
  \end{equation*}
\end{proof}

For the rest of this article we will denote the activity of the
hard-core model induced by our model of random bijections by 
\begin{equation}
  \label{eq:HCactivity}
  \lambda = \lambda(a) = a^{2}(1-a)^{-2}\,.
\end{equation}

\begin{remark}
  \label{rem:HC-Perm}
  The proof of \Cref{prop:HC-Rep} used a bijection between configurations
  of $\Phi$ (conditioned on having no global shifts) and independent
  sets. Identifying independent sets with vectors
  $\sigma = (\sigma_{x})_{x\in V}$, this bijection had the property
  that $\phi_{\V{x}}=1$ iff $\sigma_{\V{x}+\frac12}=1$,
  $\phi_{\V{x}}=-1$ iff $\sigma_{\V{x}-\frac12}=1$, and
  $\phi_{\V{x}}=0$ iff
  $\sigma_{\V{x}+\frac12}=\sigma_{\V{x}-\frac12}=0$. In other words,
  there is a perfect coupling between the hard-core model and $\Phi$
  conditioned on having no global shifts.
\end{remark}

\begin{remark}
  \label{rem:NGS}
  \Cref{lem:No-Shift} implies that conditioning on the non-occurrence
  of a global shift is asymptotically irrelevant if $n=n(m)$ is
  sub-exponential in $m$. Under this conditioning the coupling of \Cref{rem:HC-Perm}
  means that we can use the corresponding hard-core models as the
  underlying source of randomness of $\Phi$. We will do this
  frequently in what follows.
\end{remark}

\subsection{Correlation decay in the hard-core model}
\label{sec:HC-CD}

Since $C_{m}$ is one-dimensional, the hard-core model on $C_{m}$ has
rapidly decaying correlations. For later use, the next lemma
formalizes this.  For $A,B\subset C_{m}$ let
$\dist(A,B) = \min_{a\in A, b\in B} \abs{a-b}$, where $\abs{a-b}$ is
the graph distance between $a$ and $b$. Given a vector
$\sigma = (\sigma_{x})_{x\in V}$ 
and $A\subset V$ let $\sigma_{A} = (\sigma_{x})_{x\in A}$.

\begin{lemma}
  \label{lem:HC-CD}
  Let $\P$ be the law of the hard-core model on $C_{m}$, and let
  $\sigma$ denote the corresponding random independent set.  There are
  positive constants $c_{1},c_{2}$, depending on $\lambda$ but not on
  $m$ such that
  \begin{equation}
    \label{eq:HC-CD}
    \Big\vert \, \P\cb{\sigma_{A}=\tilde{\sigma}_{A}, \sigma_{B}=\tilde{\sigma}_{B}} -
      \P\cb{\sigma_{A}=\tilde{\sigma}_{A}}\P\cb{\sigma_{B} 
        = \tilde{\sigma}_{B}} \, \Big\vert \, \leq \, c_{1}e^{-c_{2}\dist (A,B)} \, .
  \end{equation}
  for all subsets $A,B\subset C_{m}$ such that $A$ is connected and all configurations
  $\tilde{\sigma}$. 
\end{lemma}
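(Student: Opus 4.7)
The plan is to reduce from the cycle $C_m$ to an interval via the spatial Markov property of the hard-core model, and then invoke correlation decay for the one-dimensional hard-core model via the transfer matrix. Since $A$ is connected it is an arc of $C_m$, so $I := C_m \setminus A$ is also an arc (an interval with two endpoints). Without loss of generality $B \subseteq I$: the constraint $\sigma_{B\cap A} = \tilde\sigma_{B\cap A}$ is already built into $\sigma_A = \tilde\sigma_A$ on the left-hand side, and $\dist(A, B \cap I) \ge \dist(A, B)$. By the spatial Markov property of the hard-core model (a special case of the Gibbs property, since $A$ separates $I$ from itself in $C_m$), the conditional law of $\sigma_I$ given $\sigma_A = \tilde\sigma_A$ is the hard-core model on $I$ at activity $\lambda$ with a boundary condition $\eta = \eta(\tilde\sigma_A) \in \set{0,1}^{\partial I}$ that forbids occupation at any endpoint of $I$ whose neighbour in $A$ is occupied under $\tilde\sigma_A$. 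Writing $\P_I^\eta$ for this measure, the lemma reduces to a uniform bound
\begin{equation*}
  \bigl| \, \P_I^\eta\cb{\sigma_B = \tilde\sigma_B} - \P_I^{\eta'}\cb{\sigma_B = \tilde\sigma_B} \, \bigr| \; \le \; c_1 \, e^{-c_2 \dist(A,B)}
\end{equation*}
uniformly over admissible $\eta, \eta'$, because the unconditional marginal $\P\cb{\sigma_B = \tilde\sigma_B}$ is a convex combination of the quantities $\P_I^\eta\cb{\sigma_B = \tilde\sigma_B}$.

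To obtain this bound I would use the transfer-matrix representation of the hard-core model on the interval $I$: reading the configuration from one endpoint to the other turns it into a two-state Markov chain whose single-step transfer matrix is
\begin{equation*}
  T \; = \; \begin{pmatrix} 1 & \lambda \\ 1 & 0 \end{pmatrix},
\end{equation*}
with real eigenvalues $\lambda_\pm = \tfrac12\bigl(1 \pm \sqrt{1+4\lambda}\bigr)$. Since $|\lambda_+| > |\lambda_-|$ the ratio $\rho := |\lambda_-|/|\lambda_+|$ lies in $(0,1)$. Diagonalising $T$ and expanding $T^k$ in its eigenbasis, one sees that changing a boundary condition at a vertex $u$ alters the marginal law of $\sigma_v$ (for $v$ at graph distance $k$ from $u$) by at most a constant multiple of $\rho^k$; the same estimate holds jointly on any connected window separated from both endpoints of $I$ by distance $\ge k$. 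In particular, the marginal of $\sigma_B$ under $\P_I^\eta$ is insensitive to $\eta$ up to an error $O(\rho^{\dist(B,\partial I)})$, which yields the displayed inequality with $c_2 = -\log \rho$ (after adjusting $c_1$ to absorb the additive loss $\dist(B,\partial I) \ge \dist(A,B) - 1$).

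Combining the two steps gives the lemma. The bulk of the work is in the transfer-matrix estimate: one needs the sub-dominant eigenvalue contribution to be controlled uniformly over the (finite list of) admissible endpoint conditions and over the length of $I$, which is a direct computation using explicit left/right eigenvectors of $T$. The connectedness hypothesis on $A$ enters only in the reduction step, where it ensures that $C_m \setminus A$ is a single interval rather than a union of arcs; if $A$ were disconnected one would iterate the decoupling argument across its components, which would only improve the bound.
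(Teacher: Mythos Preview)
Your proposal is correct and is essentially the approach the paper has in mind: the paper's proof merely cites the recurrence $Z_n(\lambda)=Z_{n-1}(\lambda)+\lambda Z_{n-2}(\lambda)$ for the hard-core partition function on a path, and your transfer-matrix computation is exactly the spectral form of that recurrence (the eigenvalues $\lambda_\pm$ are the roots of its characteristic equation $x^2-x-\lambda=0$). Your explicit reduction from $C_m$ to an interval via the spatial Markov property, exploiting that $A$ is connected so that $C_m\setminus A$ is a single arc, spells out a step the paper leaves to the reader.
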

\begin{proof}
  This is a well-known fact, and we omit a detailed proof. It can be
  derived by using the observation that the partition function
  (normalizing constant) $Z_{n}(\lambda)$ for the hard-core measure on
  a linear graph with $n$ vertices satisfies the recurrence
  $Z_{n}(\lambda)=Z_{n-1}(\lambda)+\lambda Z_{n-2}(\lambda)$, which has an
  elementary solution.
\end{proof}

The next lemma records how correlation decay in the hard-core model
translates into correlation decay for the random variables
$\phi_{\V{x}}$ that comprise $\Phi$.

\begin{lemma}
  \label{lem:RW-CD}
  Condition on $\Phi$ having no global shifts. There exist positive
  constants $c_{1},c_{2}$ depending on $\lambda$ but not on $m$ such
  that
    \begin{equation}
    \label{eq:RW-CD}
    \abs{ \P\cb{\phi_{A}=\tilde{\phi}_{A}, \phi_{B}=\tilde{\phi}_{B}} -
    \P\cb{\phi_{A}=\tilde{\phi}_{A}}  \P\cb{\phi_{B}=\tilde{\phi}_{B}}} 
        \leq c_{1}e^{-c_{2}\dist (A,B)},
  \end{equation}
  for all subsets $A,B$ belonging to single columns such that $A$ is
  connected. 
\end{lemma}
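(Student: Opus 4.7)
The plan is to push the correlation decay estimate of \Cref{lem:HC-CD} through the perfect coupling described in \Cref{rem:HC-Perm}. Recall from \Cref{prop:HC-Rep} that under the conditioning on no global shifts, the arrows in distinct columns are independent, and in each column the arrow configuration is in bijection with a hard-core configuration on the corresponding copy of $C_m^{\star}$. If $A$ and $B$ lie in distinct columns of $\T_{n,m}$, then $\phi_A$ and $\phi_B$ are independent and the left-hand side of \eqref{eq:RW-CD} is zero; so we may assume $A,B$ both lie in a single column, which I regard as $C_m$.

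For $S \subset C_m$ set $S^{\star} = \bigcup_{x \in S}\{x-\tfrac{1}{2}, x+\tfrac{1}{2}\} \subset C_m^{\star}$. The coupling of \Cref{rem:HC-Perm} expresses each $\phi_x$ as a deterministic function of $\sigma_{x-\frac{1}{2}}$ and $\sigma_{x+\frac{1}{2}}$, so the random vector $\phi_S$ is a deterministic function of $\sigma_{S^{\star}}$. Consequently, for any target configuration $\tilde\phi_S$ one of two things happens: either $\tilde\phi_S$ comes from some (necessarily unique) hard-core configuration $\tilde\sigma_{S^{\star}}$ on $S^{\star}$, in which case $\{\phi_S = \tilde\phi_S\} = \{\sigma_{S^{\star}} = \tilde\sigma_{S^{\star}}\}$, or $\tilde\phi_S$ is inconsistent with the hard-core constraint, in which case both sides of \eqref{eq:RW-CD} vanish (since the marginal probability is already zero). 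Apply this observation separately to $A$ and to $B$, so that both events in \eqref{eq:RW-CD} are recast as hard-core cylinder events on $A^{\star}$ and $B^{\star}$.

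To invoke \Cref{lem:HC-CD} one needs to know that $A^{\star}$ is connected in $C_m^{\star}$ and to control $\dist(A^{\star}, B^{\star})$. Since $A$ is by hypothesis a connected arc of $C_m$, the set $A^{\star}$ is an arc of $C_m^{\star}$, hence connected. The definition of $S^{\star}$ only enlarges sets by $1/2$ in each direction, so $\dist(A^{\star},B^{\star}) \geq \dist(A,B) - 1$. Plugging into \eqref{eq:HC-CD} then gives a bound of the form $c_1 e^{-c_2(\dist(A,B)-1)}$, and absorbing the constant factor $e^{c_2}$ into $c_1$ yields \eqref{eq:RW-CD}.

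I do not anticipate a serious obstacle: the argument is essentially a bookkeeping translation through the coupling. The only point requiring any care is the consistency check above, namely that each $\phi$-cylinder event either coincides with a hard-core cylinder event or is null, so that \Cref{lem:HC-CD} may be applied directly without any degeneracy hiding in the comparison.
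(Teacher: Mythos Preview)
Your proposal is correct and follows essentially the same route as the paper: reduce to the same-column case by independence, translate the arrow events into hard-core cylinder events via the perfect coupling of \Cref{rem:HC-Perm}, and apply \Cref{lem:HC-CD}. Your write-up is in fact more explicit than the paper's, which does not spell out the passage from $A$ to $A^{\star}$, the connectedness check, or the distance comparison $\dist(A^{\star},B^{\star})\geq \dist(A,B)-1$; the paper simply states the translation~\eqref{eq:TD} and asserts the conclusion.
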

\begin{proof}
  The claim is immediate by independence if $A,B$ are in disjoint
  columns. Otherwise, let $\P_{HC}$ be the law of independent hard-core models
  on each column of $\T_{n,m}^{\star}$, and let $\P$ be the induced
  law on $\{\phi_{\V{x}}\}_{\V{x}\in \T_{n,m}}$. Then by
  \Cref{rem:HC-Perm}, and letting $\sigma_{A}$ denote the hard-core
  configuration corresponding to the arrows specified by $\tilde \phi_A$,
  \begin{equation}
    \label{eq:TD}
    \P \big[ \, \phi_{\V{x}} = w \, \big\vert \, \sigma_{A} \, \big] \, = \, 
    \begin{cases}
     \, \P_{HC} \big[ \, \sigma_{\V{x}-\frac{1}{2}}=1 \, \big\vert \, \sigma_{A} \, \big]  & w=-1 \\
    \,  \P_{HC} \big[ \, \sigma_{\V{x}+\frac{1}{2}}=1  \, \big\vert  \,  \sigma_{A} \, \big] & w=1 \\
    \,  \P_{HC} \big[ \, \sigma_{\V{x}-\frac{1}{2}}=\sigma_{\V{x}+\frac{1}{2}}=0
        \, \big\vert \,   \sigma_{A} \, \big] & w = 0 
    \end{cases},
  \end{equation}
  and similarly for computations of $\P
  \cb{\phi_{\V{x}}=w,\phi_{\V{y}}=w'|\sigma_A}$. If
  $A,B$ contain vertices in the same column the claim now follows from
  \Cref{lem:HC-CD}: the probability of a configuration on the
  interval $A$ is only affected by the configuration of
  $B$ closest to each side of the interval.
\end{proof}

In \Cref{sec:IGC} we will make use of the hard-core model on $\Z$. Let
$\P_{m}$ denote the hard-core measure on $C_{m}$ with a fixed activity
$\lambda$. Correlation decay allows $\P_{\Z}$, the hard-core model on
$\mathbb{Z}$, to be defined as the unique weak limit of the $\P_{m}$
as $m\to\infty$. $\P_{\Z}$ is translation invariant and
inherits the correlation decay properties of the finite-volume
measures. In particular, $\P_{\Z}\cb{\sigma_{x}=1}$ exists, is positive if
$\lambda>0$, and is independent of $x$.

\subsection{Glauber dynamics for the hard-core model}
\label{sec:HC-GD}

Let $G=(V,E)$ be a finite graph. The \emph{Glauber dynamics} for the
hard-core model are a Markov chain on $\{0,1\}^{V}$ whose equilibrium
distribution is that of the hard-core model on $G$. Let
$\sigma = (\sigma_{v})_{v\in V}$ denote an independent set. A single
step of the chain is defined as follows.
\begin{enumerate}
\item Choose a vertex $v\in V$ uniformly at random.
\item Conditionally on $\{\sigma_{x}\mid \{x,v\}\in E\}$, resample
  $\sigma_{v}$ according to the hard-core measure.
\end{enumerate}
The claim that the hard-core measure is the equilibrium distribution
of the chain follows by checking the detailed balance equations. 

\begin{remark}
  \label{rem:Glauber-SM}
  Specialize to the case $G=C_{m}^{\star}$, and recall the bijection
  between hard-core and arrow configurations from \Cref{rem:HC-Perm}.
  A Glauber dynamics update at $v$ can change the value of
  $\sigma_{v}$ only if $\sigma_{v+1}=\sigma_{v-1}=0$. If
  $\sigma_{v}=0$ as well, then the arrows at $v\,\pm \frac{1}{2}$ are
  parallel, while if $\sigma_{v}=1$ there is a swap at $v$. If a
  Glauber update at $v$ results in a change of $\sigma_{v}$ then
  either parallel arrows become swapping arrows or vice versa.
  Viewing these arrows as being single steps in the cycle(s) of
  $v\,\pm\frac12$, it follows that this update merges the cycles if
  they are distinct, and splits the cycle otherwise. See
    Figure~\ref{fig:HC-arrows}. This induced
  dynamics on cycles will be our main tool for studying the cycle
  structure of $\Phi$.
\end{remark}

\section{Equilibrium properties I: strands and global traversals}
\label{sec:First-Properties}

The main aim of this section is to define convenient geometric
structures that allow us reduce statements about cycles to
statements about one-dimensional random walks. The key outputs of this
section will be the idea of a \emph{global traversal}, that all cycles
are comprised of at least one global traversal, and that the number of
global traversals in a cycle is an accurate proxy for the size of a
cycle.

Henceforth we set $C(m)=\ceil{C'\sqrt{m\log m}}$ for a constant $C'$
that will be determined later (see \Cref{cor:All-Separated} and
\Cref{rem:Sdredef}).

\subsection{Random walk interpretation: strands}
\label{sec:RW}

The cycles of a random bijection $\Phi$ can be thought of as closed
random walk paths.  The lack of independence between the random
variables $(\phi_{\V{x}})_{\V{x}\in\T_{n,m}}$ that comprise $\Phi$
prevents direct control of these closed paths. This section introduces
\emph{strands} as a probabilistically convenient tool that will be
used to study the local geometric structure of cycles.

\begin{definition}
  Fix $\V{x}\in \T_{n,m}$, and let $\V{x}_{k} = \Phi^{k}(\V{x})$ for
  $k=0, \dots, n-1$. The \emph{strand $\st{\V{x}}$ starting at
    $\V{x}$} is the sequence $(\V{x}_k)_{k=0}^{n-1}$. Explicitly, 
\begin{equation}
  \label{eq:strand-explicit}
   \st{\V{x}}=\ob{ \ob{x_{1}+j,x_{2}+\sum_{k=0}^{j-1}(\phi_{\V{x}_{k}} + 1)}}_{j=0}^{n-1}.
\end{equation}
\end{definition}

The probabilistic content of a strand is contained in the increments
$\phi_{\V{x}_{k}}$, which encode a one-dimensional random walk
trajectory $\sti{\V{x}}$,
\begin{equation}
  \label{eq:increment-explicit}
  \sti{\V{x}} = \ob{x_{2}+\sum_{k=0}^{j-1}\phi_{\V{x}_{k}}}_{j=0}^{n-1},
\end{equation}
with the convention that the sum from $0$ to $-1$ is zero.  The
independence of the increments in distinct columns along a strand
gives a simple but important observation.
\begin{lemma}
  \label{lem:increment}
  Condition on the event that $\Phi$ contains no global shift. 
  $\sti{\V{x}}$ is equal in law to an $n$-step random walk with IID
  increments in $\{-1,0,1\}$. The law of the increments does not
  depend on $\V{x}$, is symmetric, and has finite and non-zero
  variance.
\end{lemma}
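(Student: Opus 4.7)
The plan is to combine column independence (\Cref{sec:No-shifts}) with the within-column translation invariance of the hard-core representation (\Cref{prop:HC-Rep}). The first observation is that, by \eqref{eq:strand-explicit}, $(\V{x}_j)_1 = x_1+j \pmod{n}$, so the positions $\V{x}_0,\dots,\V{x}_{n-1}$ lie in $n$ distinct columns; in particular, each increment $\phi_{\V{x}_j}$ depends only on the randomness in column $x_1+j$ together with the row $(\V{x}_j)_2$.

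To establish the IID structure I would reveal the strand column by column. Conditionally on $\phi_{\V{x}_0},\dots,\phi_{\V{x}_{j-1}}$, the row $(\V{x}_j)_2$ is determined and is a function only of the configurations in columns $x_1,\dots,x_1+j-1$. By column independence, the configuration in column $x_1+j$ is independent of this history, so the conditional law of $\phi_{\V{x}_j}$ equals the unconditional marginal of $\phi$ at a deterministic vertex of column $x_1+j$. Because the hard-core measure on $C_m^\star$ is invariant under cyclic shifts --- both the collection of independent sets and the weight $\lambda^{|A|}$ are preserved by such shifts --- the coupling in \Cref{rem:HC-Perm} implies that this marginal does not depend on the row; and since all columns have the same law, it does not depend on the column index either. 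This common marginal, supported on $\{-1,0,1\}$, is therefore the law of the IID increments, and it does not depend on $\V{x}$.

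Symmetry is then immediate from the coupling of \Cref{rem:HC-Perm} and translation invariance of the hard-core measure:
\[
  \P[\phi_{\V{y}}=1] \;=\; \P[\sigma_{\V{y}+\frac{1}{2}}=1] \;=\; \P[\sigma_{\V{y}-\frac{1}{2}}=1] \;=\; \P[\phi_{\V{y}}=-1].
\]
The variance is finite because the increments are bounded, and strictly positive because $\P[\sigma_v=1]>0$ whenever the activity $\lambda=a^2/(1-a)^2$ is positive, which holds under the non-degeneracy assumption $a\in(0,\tfrac{1}{2})$. No substantive obstacle is expected; the only care required is verifying that the random row $(\V{x}_j)_2$ does not destroy the IID structure, which is handled cleanly by column-wise independence combined with within-column translation invariance.
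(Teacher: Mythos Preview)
Your proof is correct and follows essentially the same approach as the paper: column independence together with translation invariance of the hard-core measure on each $C_m^\star$ via \Cref{prop:HC-Rep} and \Cref{rem:HC-Perm}. The paper's proof compresses the IID argument into a single sentence, whereas you spell out the column-by-column revealing and explain why the random row $(\V{x}_j)_2$ does not spoil independence; this added detail is helpful but not a different method.
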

\begin{proof}
  Since the conditioning does not affect the
  distribution of a single arrow in a column, the conclusion follows.
  By Proposition~\ref{prop:HC-Rep} and
  Remark~\ref{rem:HC-Perm} the probability $\P\cb{\phi_{\V{x}}=1}$
  that an increment at $\V{x}$ is $+1$ is the probability that
  $\V{x}-\frac12$ is occupied in the hard-core model on $C_{m}$, which
  is independent of $\V{x}$. By translation invariance of the
  hard-core model this is equal to $\P\cb{\phi_{\V{x}}=-1}$, i.e., the
  probability that $\V{x}+\frac12$ is occupied. Non-degeneracy follows
  as $\P\cb{\phi_{\V{x}}=-1}\in \ob{0,1}$.
\end{proof}

\subsection{Strand geometry}
\label{sec:RW-Objects}

The fact that $\sti{\V{x}}$ is an $n$-step lazy simple random walk
allows for strong \emph{a priori} statements about the geometry of
strands. We record the key lemma in its natural generality.

\begin{lemma}
  \label{lem:Corridor}
  Let $S_{0}=0$, $S_{k} = \sum_{i=1}^{k}X_{i}$, with the $X_{i}$ IID
  and distributed as $X$. Suppose $X$ is symmetric about $0$,
  $\Var{X}=\sigma^{2}>0$, and that $X$ has a finite moment generating
  function.  For $A>0$ and $c>0$ fixed constants the following
  estimate holds:
  \begin{equation}
    \label{eq:Corridor}
    \P \cb{ \max_{0\leq j\leq ct}\abs{S_{j}} \geq A \sqrt{t\log t}}
    \leq (1+o(1)) t^{-\frac{A^{2}}{2c\sigma^{2}}}. 
  \end{equation}
\end{lemma}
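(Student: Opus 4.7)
The plan is a standard application of Doob's maximal inequality to the exponential martingale generated by $S$, with the tilt parameter chosen optimally for the moderate-deviation scale $\sqrt{t\log t}$. By the symmetry assumption $X\stackrel{d}{=}-X$, the process $-S$ is distributed as $S$, so a union bound reduces the two-sided estimate to twice the one-sided bound $\P[\max_{0 \leq j \leq ct} S_j \geq A\sqrt{t\log t}]$, which is what I would target directly.

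Let $\psi(\theta) = \log \E e^{\theta X}$. The finite moment generating function hypothesis ensures $\psi$ is smooth on a neighborhood of the origin, and the assumptions $\E X = 0$, $\Var X = \sigma^2$ give the Taylor expansion $\psi(\theta) = \tfrac{\sigma^2}{2}\theta^2 + O(\theta^3)$ as $\theta \to 0$. Since $M_j^{(\theta)} := \exp\bigl(\theta S_j - j\psi(\theta)\bigr)$ is a nonnegative martingale with mean one, Doob's maximal inequality yields, for all $\theta > 0$,
\begin{equation*}
\P\bigl[\max_{0 \leq j \leq N} S_j \geq x\bigr] \leq \exp\bigl(-\theta x + N \psi(\theta)\bigr).
\end{equation*}
I would then apply this with $N = \lceil ct\rceil$, $x = A\sqrt{t\log t}$, and the Gaussian-optimal tilt $\theta^{\ast} = x/(N\sigma^2) = \Theta\bigl(\sqrt{\log t/t}\bigr)$. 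Substituting gives
\begin{equation*}
-\theta^{\ast} x + N\psi(\theta^{\ast}) = -\frac{x^2}{2N\sigma^2} + O\bigl(N (\theta^{\ast})^3\bigr) = -\frac{A^2 \log t}{2c\sigma^2} + O\bigl((\log t)^{3/2}/\sqrt{t}\bigr),
\end{equation*}
so that exponentiating the $o(1)$ error in the exponent produces a $(1+o(1))$ multiplicative correction, and the claim follows.

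I do not anticipate any genuine obstacle; this is a Cram\'er-type moderate-deviation estimate of a standard flavour. The one delicate point is the cubic remainder in the Taylor expansion of $\psi$: the scale $\sqrt{t\log t}$ is chosen precisely so that $\theta^{\ast}\to 0$ fast enough that $N(\theta^{\ast})^3 = O((\log t)^{3/2}/\sqrt{t}) \to 0$, keeping the quadratic approximation valid inside the exponential and producing the stated exponent. The constant factor of $2$ arising from passing to the absolute value is either absorbed into the $(1+o(1))$ via the sub-polynomial prefactor $1/\sqrt{\log t}$ that one obtains from a reflection-principle refinement of Doob (bounding $\P[\max_j S_j \geq x]$ by $2\P[S_N \geq x]$ and using the Gaussian tail), or else treated as a harmless constant that does not affect the use of the lemma in the sequel.
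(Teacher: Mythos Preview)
Your proposal is correct and takes a slightly different route from the paper. The paper reduces (via union bound and symmetry, as you do) to the one-sided maximum, then applies the reflection principle $\P[\max_{j\leq k} S_j > a] \leq 2\P[S_k > a]$ and invokes Petrov's moderate deviation theorem as a black box to bound $\P[S_{ct} > A\sqrt{t\log t}]$; the Gaussian tail there carries a $1/\sqrt{\log t}$ prefactor that absorbs the constant $4$ into the $(1+o(1))$. You instead apply Doob's maximal inequality directly to the exponential martingale and Taylor-expand the cumulant generating function at the optimal tilt $\theta^\ast = x/(N\sigma^2)$. This is more self-contained---no external moderate deviation result is needed, and the verification that the cubic remainder $N(\theta^\ast)^3 = O((\log t)^{3/2}/\sqrt t)$ vanishes is exactly the point---but it leaves a stray constant factor of $2$. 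You correctly flag this at the end, and your suggested fix (replacing Doob by reflection plus the single-time Gaussian-tail bound) is precisely what the paper does. Either way the lemma is established to the precision needed downstream, where only the exponent $A^2/(2c\sigma^2)$ matters.
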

\begin{proof}
  Let $a>0$. Note that
  \begin{align*}
         \P\cb{S_{j}\in\cb{-a,a}, 0\leq j\leq k} 
   & = 
    \P\cb{ \max_{0\leq j\leq k} S_{j}\leq a \, , \, 
    \min_{0\leq j\leq k}S_{j}\geq -a} \\
  & =  1 - \P \cb{ \Big\{ \max_{0\leq j\leq k} S_{j} >  a \Big\} \bigcup \Big\{ \min_{0\leq
     j\leq k}S_{j} <  -a \Big\} }
   \\
   & \geq  1- 2\, \P\cb{\max_{0\leq j\leq k}S_{j}>a}
   \\
   & \geq  1- 4\, \P\cb{ S_{k}>a}
  \end{align*}
  where the first inequality is by the union bound and the symmetry of
  the random walk, and the second inequality is by the reflection
  principle $\P\cb{\max_{0\leq j\leq k}S_{j} >a} \leq
  2\P\cb{S_{k}>a}$, see, e.g.,~\cite[Theorem~5.2.7]{Durrett}.

  Upper bounding $\P\cb{S_{k}>a}$ when $a=A\sqrt{t\log t}$ and $k=ct$
  requires a large deviation estimate. We use
  \cite[Theorem~5.23]{Petrov} (and a standard tail estimate for the
  normal distribution) to obtain 
  \begin{equation}
    \label{eq:Petrov}
    \P\cb{S_{ct}>A \sqrt{t\log t}} 
        \leq  (1+o(1)) \exp( - (\frac{A}{\sqrt{c}} \sqrt{\log
      t})^{2}/2\sigma^{2}) 
        =  \big(1+o(1)\big) t^{-\frac{A^{2}}{2c\sigma^{2}}}.\qedhere
  \end{equation}
\end{proof}

Given a vertex $\V{x}$, the cycle $\cycle(\V{x})$ containing $\V{x}$
can be decomposed into a sequence of strands.  The second strand
begins at the endpoint of the first, i.e., at the endpoint of
$\st{\V{x}}$. This inductively decomposes $\cycle(\V{x})$ into
$(\st{\V{x}_{i}})_{i=0}^{k}$ for some $k\in\N$, where $\V{x}_{i}$ is
the initial vertex of the $i$\textsuperscript{th} strand. Each of
these strands are distinct. For $j\in \ccb{0,k}$, the strands
$\st{\V{x}_{j}}$ and $\st{\V{x}_{j+1}}$ will be called
\emph{consecutive}, with $k+1$ interpreted as $0$. The relation of
being consecutive puts a cyclic order on strands that are contained in
the same cycle.

Lemma~\ref{lem:Corridor} implies that consecutive strands are
well-separated if the torus $\T_{n,m}$ is appropriately asymmetric, as
the vertical distance between two consecutive strands in a single
column is equal to the vertical distance between the first and last
points of a single strand. To make a precise statement, we extend the
distance $\dist(A,B)$ between subsets $A,B\subset C_{m}$ to distances
between subsets of $A$ and $B$ of $\T_{n,m}$ as follows. Let $A_{j}$ denote the
subset of $A$ in column $j$, and similarly for $B_{j}$. Define
\begin{equation*}
  \dist^{-}(A,B) = \min_{j\in \ccb{n}} \dist(A_{j},B_{j}), \quad
  \dist^{+}(A,B) = \max_{j\in\ccb{n}} \dist(A_{j},B_{j}).
\end{equation*}

\begin{proposition}
  \label{prop:Strand-Gap}
  Fix $A>0$, and suppose $n=m+C(m)$ with $C'>A$.
  Let $\V{x}_{2}$ be the
  endpoint of the strand $\st{\V{x}_{1}}$.
  Then
  \begin{eqnarray}
    \label{eq:Gap-LB}
    \P \cb{ \dist^{-}(\st{\V{x}_{1}}, \st{\V{x}_{2}}) \leq \floor{C(m)-A\sqrt{m\log m} }}
    &\leq & \big(1+o(1) \big)m^{-\frac{A^{2}-2}{2}}, \\
        \label{eq:Gap-UB}
    \P \cb{ \dist^{+}(\st{\V{x}_{1}}, \st{\V{x}_{2}}) \geq \floor{C(m)+A \sqrt{m\log m}}}
    &\leq &  \big(1+o(1) \big)m^{-\frac{A^{2}-2}{2}},
  \end{eqnarray}
\end{proposition}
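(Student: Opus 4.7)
The plan is to express the vertical offset between the two consecutive strands, in each column, as $C(m)$ plus a small random walk fluctuation, and then invoke the corridor bound \Cref{lem:Corridor}.

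First, I would parametrize both strands by a step index $r \in \{0, 1, \dots, n-1\}$. Strand $\st{\V{x}_1}$ at step $r$ lies in column $x_1 + r \pmod n$ at vertical height $x_2 + r + S_r \pmod m$, where $S_r := \sum_{k=0}^{r-1} \phi_{\Phi^k(\V{x}_1)}$. Since $\V{x}_2 = \Phi^n(\V{x}_1)$ and $n = m + C(m)$, the horizontal coordinate of $\V{x}_2$ is $x_1 \pmod n$ and its vertical coordinate is $x_2 + C(m) + S_n \pmod m$. Consequently $\st{\V{x}_2}$ at step $r$ lies in the \emph{same} column $x_1 + r \pmod n$, at vertical height $x_2 + C(m) + S_n + r + S'_r \pmod m$, where $S'_r := \sum_{k=0}^{r-1}\phi_{\Phi^k(\V{x}_2)}$. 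The signed vertical offset in that column, lifted to $\Z$, is
\begin{equation*}
D_r \; := \; C(m) + (S_n - S_r) + S'_r.
\end{equation*}

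Because each strand visits each column exactly once, \Cref{lem:increment} shows that $(\phi_{\Phi^k(\V{x}_i)})_{k=0}^{n-1}$ is IID, symmetric on $\{-1, 0, 1\}$, and of finite non-zero variance, for both $i = 1, 2$. Exploiting the pointwise bound $|D_r - C(m)| \leq 2\max_{r'}|S_{r'}| + \max_{r'}|S'_{r'}|$, I would apply \Cref{lem:Corridor} with $t = m$ and $c = n/m = 1 + o(1)$ separately to the walks $S$ and $S'$, with thresholds a suitable fraction of $A\sqrt{m\log m}$, and take a union bound, to conclude that $\max_r |D_r - C(m)| \leq A\sqrt{m\log m}$ except with probability at most $(1+o(1))\,m^{-(A^2 - 2)/2}$. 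On this event, $D_r \in (C(m) - A\sqrt{m\log m},\, C(m) + A\sqrt{m\log m}) \subset (0, m/2)$ for large $m$, so the circle distance in column $x_1 + r \pmod n$ coincides with $|D_r|$. Both \eqref{eq:Gap-LB} and \eqref{eq:Gap-UB} then follow, with the hypothesis $C' > A$ ensuring positivity of $C(m) - A\sqrt{m\log m}$ for the lower bound.

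The main obstacle is bookkeeping to match the advertised exponent $(A^2 - 2)/2$. This requires choosing the split of the threshold between the contributions of $S$ and $S'$ with some care, and tracking how the variance $\sigma^2$ of the increments (a function of $a$ via the hard-core density) and the ratio $c = n/m$ enter the exponent in \Cref{lem:Corridor}. The walks $S$ and $S'$ are not independent of one another, as the two strands' increments in each shared column both depend on the same hard-core configuration, but the union bound used above demands only marginal control. A final technicality is that applying \Cref{lem:increment} requires conditioning on no global shifts, but \Cref{lem:No-Shift} shows this conditioning costs only an $O(ne^{-cm})$ term, which is dwarfed by the claimed polynomial rate.
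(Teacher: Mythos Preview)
Your formula $D_r = C(m) + (S_n - S_r) + S'_r$ for the signed vertical offset is correct, and this is also where the paper's proof begins. However, the bound $|D_r - C(m)| \le 2\max_{r'}|S_{r'}| + \max_{r'}|S'_{r'}|$, followed by separate applications of \Cref{lem:Corridor} to $S$ and $S'$, cannot yield the exponent $(A^2-2)/2$ no matter how carefully the threshold is split. If you allot $\alpha_1 A\sqrt{m\log m}$ to $\max|S|$ and $\alpha_2 A\sqrt{m\log m}$ to $\max|S'|$ with $2\alpha_1 + \alpha_2 = 1$, the failure probability is at best of order $m^{-\alpha_1^2 A^2/2} + m^{-\alpha_2^2 A^2/2}$; optimizing gives only $m^{-A^2/18}$, far short of $m^{-(A^2-2)/2}$ once $A$ is moderately large. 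The ``bookkeeping'' obstacle you flag is in fact a genuine loss, not just a matter of care.

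The paper instead observes that for each \emph{fixed} $r$, the quantity
\[
D_r - C(m) \;=\; \sum_{k=r}^{n-1}\phi_{\Phi^k(\V{x}_1)} \;+\; \sum_{k=0}^{r-1}\phi_{\Phi^k(\V{x}_2)}
\]
is a sum of exactly $n$ increments, one from each column of $\T_{n,m}$, and is therefore itself an IID sum (by the same reasoning as \Cref{lem:increment}). The endpoint large-deviation estimate~\eqref{eq:Petrov} then gives $\P\big[|D_r - C(m)| \ge A\sqrt{m\log m}\big] \le (1+o(1))\,m^{-A^2/2}$ directly, using $\sigma^2 < 1$ and $n = (1+o(1))m$. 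A union bound over the $n = (1+o(1))m$ values of $r$ contributes the factor $m$ and produces the final exponent $(A^2-2)/2$. The key point you are missing is that $D_r - C(m)$ need not be split into pieces at all: it is already the endpoint of a single $n$-step IID walk, so the full threshold $A$ can be used undiluted.
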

\begin{proof}
  We start with \eqref{eq:Gap-LB}.  Let
  $\st{\V{x}_{i}} = \big(\V{x}^{(i)}_{0}, \V{x}^{(i)}_{1}, \dots,
  \V{x}^{(i)}_{n-1} \big)$ for $i=1,2$. Note that
  \begin{equation*}
    \dist^{-}(\st{\V{x}_{1}}, \st{\V{x}_{2}}) \, = \, \min_{i=0,1,\dots, n-1}
    \abs{ \pi_{2}( \V{x}^{(1)}_{i}) - \pi_{2}(\V{x}^{(2)}_{i})} \, ,
  \end{equation*}
  and, (recall~\eqref{eq:increment-explicit})
  \begin{equation*}
    \abs{\pi_{2} (\V{x}^{(1)}_{i}) - \pi_{2}(\V{x}^{(2)}_{i})} \, = \, 
    \abs{\sti{\V{x}_{0}^{(i)}}_{n-1} + C(m)} \mod m. 
  \end{equation*}
  The preceding display and $C(m)=\ceil{C'\sqrt{m\log
      m}}\leq m/3$ implies that the bound 
  \begin{equation*}
  \abs{\pi_{2} (\V{x}^{(1)}_{i}) - \pi_{2}(\V{x}^{(2)}_{i})} \, \leq \,
  \floor{C(m)-A\sqrt{m\log m}}
  \end{equation*}
  can hold only if the endpoint $\sti{\V{x}_{0}^{(i)}}_{n-1}$ of the
  increment random walk $\sti{\V{x}_{0}^{(i)}}$ started at
  $\V{x}_{0}^{i}$ has absolute value at least $A\sqrt{m\log m}$.
  \Cref{lem:increment} shows the hypotheses of \Cref{lem:Corridor} are
  satisfied, and applying \Cref{lem:Corridor} with $c=1$ and $t=n$
  yields
  \begin{equation*}
    \P\cb{ \, \abs{\pi_{2} (\V{x}^{(1)}_{i}) - \pi_{2}(\V{x}^{(2)}_{i})}
      \geq A\sqrt{m\log m} \, }  \,\leq \, \big(1+o(1) \big) m^{-\frac{A^{2}}{2}} \,,
  \end{equation*}
  where we have used that $n=(1+o(1))m$ and that $\sigma^{2}<1$ for
  the strand random walk. Hence by a union bound over the $m$
  consecutive strands in the system,
  \begin{equation*}
    \P \cb{ \, \dist^{-}(\st{\V{x}_{1}}, \st{\V{x}_{2}}) \leq \floor{C(m)-A\sqrt{m\log m}} \, }
   \,  \leq\,  \big(1+o(1) \big) m^{-\frac{A^{2}-2}{2}}.
  \end{equation*}
  The bound (\ref{eq:Gap-UB}) follows, \emph{mutatis mutandis}, from
  the same argument.
\end{proof}

The remainder of this section collects properties of strands that hold
with high probability.

\begin{definition}
  \label{def:S}
  Let $S_{L}(r)$ be the event that all pairs of consecutive strands
  $\st{\V{x}}$, $\st{\tilde{\V{x}}}$ satisfy
  $\dist^{-}(\st{\V{x}},\st{\tilde{\V{x}}})\geq r\sqrt{m\log
  m}$. Formally, if $\V{x}^{+}$ denotes the endpoint of
  $\st{\V{x}}$,
  \begin{equation}
    \label{eq:SL}
    S_{L}(r) = \bigcap_{\V{x}\in \pi_{2}^{-1}(0)} \set{
      \dist^{-}(\st{\V{x}},\st{\V{x}^{+}})\geq r\sqrt{m\log m}}.
  \end{equation}
  Similarly, let $S_{U}(r)$ denote the event that all pairs of
  consecutive strands satisfy $\dist^{+}(\st{\V{x}}$,
  $\st{\tilde{\V{x}}})\leq r\sqrt{m\log m}$.  Let
  $S^{0}(D)=S_{L}(D) \cap S_{U}(4+\frac{3D}{2})$.
\end{definition}

\begin{corollary}
  \label{cor:All-Separated}
  Let $D>0$ and $C'=2+5D/4$. Then
  $\P\cb{ S^{0} (D)} \geq 1- o(m^{-D/2})$.
\end{corollary}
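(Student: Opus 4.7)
The plan is to apply \Cref{prop:Strand-Gap} twice, once to bound $\P[S_L(D)^c]$ and once to bound $\P[S_U(4+\tfrac{3D}{2})^c]$, with a single auxiliary parameter $A$ chosen so that the deviation thresholds $C(m) \pm A\sqrt{m\log m}$ appearing there straddle the thresholds $D\sqrt{m\log m}$ and $(4+\tfrac{3D}{2})\sqrt{m\log m}$ in the definition of $S^0(D)$. A union bound over the $n = O(m)$ starting vertices $\V{x} \in \pi_{2}^{-1}(0)$ appearing in the definitions of $S_L$ and $S_U$ then converts each per-pair estimate into a bound on the complementary event.

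The crucial step is to pick $A$ optimally. Writing $C' = 2 + \tfrac{5D}{4}$, the choice $A = 2 + \tfrac{D}{4}$ simultaneously satisfies $C' - A = D$ and $C' + A = 4 + \tfrac{3D}{2}$. Hence, for large $m$, $\floor{C(m) - A\sqrt{m\log m}} \geq D\sqrt{m\log m}$ and $\floor{C(m) + A\sqrt{m\log m}} \leq (4+\tfrac{3D}{2})\sqrt{m\log m}$, the $O(1)$ slack from the floor and ceiling operations being negligible against $\sqrt{m\log m}$. Consequently, the bad events defining $S_L(D)^c$ and $S_U(4+\tfrac{3D}{2})^c$ at any fixed starting vertex are each contained in the events controlled by \Cref{prop:Strand-Gap}, and hence each has probability at most $(1+o(1)) m^{-(A^2-2)/2}$.

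Union-bounding over the $n = O(m)$ vertices in $\pi_{2}^{-1}(0)$ gives $\P[S^0(D)^c] = O(m \cdot m^{-(A^2-2)/2}) = O(m^{(4-A^2)/2})$. Computing $A^2 = 4 + D + \tfrac{D^2}{16}$ yields $(4-A^2)/2 = -\tfrac{D}{2} - \tfrac{D^2}{32}$, so the bound becomes $O(m^{-D/2 - D^2/32}) = o(m^{-D/2})$, as required. The implicit conditioning on the absence of a global shift (needed to invoke \Cref{lem:increment} inside the proof of \Cref{prop:Strand-Gap}) contributes only an exponentially small correction $O(m e^{-cm})$ by \Cref{lem:No-Shift}, which is absorbed into the polynomial error. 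There is no substantive obstacle here; the content of the corollary is the verification that $C' = 2 + 5D/4$ is just large enough to sustain both deviation bounds with sufficient surplus to survive the $O(m)$ union bound.
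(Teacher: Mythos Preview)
Your proposal is correct and follows exactly the paper's approach: set $A = 2 + D/4$ so that $C' - A = D$ and $C' + A = 4 + \tfrac{3D}{2}$, apply \Cref{prop:Strand-Gap}, and union bound over the $m$ pairs of consecutive strands. The paper's own proof is a single sentence stating precisely this; you have simply filled in the arithmetic (including the observation that the surplus $D^{2}/32$ in the exponent is what turns $O(m^{-D/2})$ into $o(m^{-D/2})$) and noted the harmless contribution from the global-shift conditioning.
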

\begin{proof}
  Let $A = 2 + D/4$. The result follows from
  \Cref{prop:Strand-Gap} and the union bound since there are exactly
  $m$ consecutive strands in any configuration.
\end{proof}

\begin{remark}
  \label{rem:C'}
  Henceforth we will assume $C'=C'(D)$ has been chosen as in
  \Cref{cor:All-Separated}.
\end{remark}

Corollary~\ref{cor:All-Separated} implies that we can study $\Phi$
conditionally on $S^{0} (D)$ occurring for some $D>0$, i.e., we can
work on the event that consecutive strands are neither very close nor
very far from one another.

Let $\V{x}'$ be the endpoint of $\st{\V{x}}$. The next lemma says that
if $\V{y}$ is in the same column as $\V{x}$, and $\V{y}$ is not too close to either
$\V{x}$ or $\V{x'}$, then $\st{\V{y}}$ will not get near $\st{\V{x}}$
or $\st{\V{x'}}$.
\begin{lemma}
  \label{lem:safezone}
  Let $\eta'= D/2$. For $D$ large enough there is a $c>0$ such that
  the following holds with probability $1-m^{-cD^{2}}$ on $S^{0}(D)$. Suppose
  $\V{x}$ and $\V{y}$ are in the same column, and
  $d^{-}(\V{x},\V{y})>\eta'\sqrt{m\log m}$. If
  $d^{-}(\V{x}',\V{y})\geq \eta' \sqrt{m\log m}$, then
  $d^{-}(\st{\V{x}'},\st{\V{y}})$ and $d^{-}(\st{\V{x}},\st{\V{y}})$
  are at least $\Theta( D\sqrt{m\log m})$.
\end{lemma}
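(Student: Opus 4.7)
The plan is to convert the hypothesized separation of $\V x$ and $\V y$ (and of $\V x'$ and $\V y$) in their common column into a separation of the whole strands, by a martingale bound on the \emph{difference} between the two strand trajectories. Since $S^{0}(D)$ has probability $1 - o(1)$ and the standing ``no global shifts'' assumption (\Cref{rem:NGS}) preserves independence across columns, I would work unconditionally, and at the end divide by $\P\cb{S^{0}(D)}$ to conditionalise.

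Fix $\V z,\V w$ in a common column with $d^{-}(\V z,\V w) \geq \eta'\sqrt{m\log m}$, write $\V z_{j} = \Phi^{j}(\V z)$ and $\V w_{j} = \Phi^{j}(\V w)$, and set
\begin{equation*}
\tilde D_{j} \;=\; \sum_{k=0}^{j-1}\big(\phi_{\V z_{k}} - \phi_{\V w_{k}}\big),
\end{equation*}
so the signed vertical gap between $\V z_{j}$ and $\V w_{j}$ in column $z_{1}+j$ equals $(z_{2} - w_{2}) + \tilde D_{j} \pmod m$. I would show $(\tilde D_{j})$ is a martingale with respect to the filtration $\cc F_{j}$ generated by the arrows in columns $z_{1},\ldots,z_{1}+j-1$: the points $\V z_{j},\V w_{j}$ are $\cc F_{j}$-measurable, while column $z_{1}+j$ is independent of $\cc F_{j}$, and each of the marginals $\phi_{\V z_{j}}, \phi_{\V w_{j}}$ has conditional mean zero by the translation invariance of the hard-core model (\Cref{prop:HC-Rep}, \Cref{rem:HC-Perm}). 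Crucially, we do \emph{not} need independence of $\phi_{\V z_{j}}$ and $\phi_{\V w_{j}}$ within the column --- only the vanishing of both marginal conditional means.

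The increments are bounded by $2$, so Azuma--Hoeffding (together with Doob's maximal inequality and $n = (1+o(1))m$) yields
\begin{equation*}
\P\cb{\max_{0\leq j\leq n}|\tilde D_{j}| \geq (D/4)\sqrt{m\log m}} \;\leq\; 2m^{-D^{2}/128}.
\end{equation*}
On the complementary event, the linear gap stays within $(D/4)\sqrt{m\log m}$ of $z_{2} - w_{2}$; combined with $d^{-}(\V z,\V w) \geq (D/2)\sqrt{m\log m}$, which is much less than $m/2$, this implies the cyclic distance between $\V z_{j}$ and $\V w_{j}$ in $C_{m}$ is at least $(D/4)\sqrt{m\log m}$ for every $j$, hence $d^{-}(\st{\V z},\st{\V w}) \geq (D/4)\sqrt{m\log m} = \Theta(D\sqrt{m\log m})$.

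To finish, I would union bound over the $O(m^{3})$ admissible pairs $(\V z,\V w)$; for $D$ large enough the resulting failure probability is at most $m^{-cD^{2}}$ for some $c>0$. Since both $(\V x,\V y)$ and $(\V x',\V y)$ lie in this family ($\V x'$ sits in column $x_{1}$ by construction), the lemma follows. I expect the main point of care to be the martingale verification: although the positions $\V z_{k}, \V w_{k}$ are themselves random and determined by $\cc F_{k}$, the hard-core symmetry makes the conditional mean of each increment vanish regardless of where the strands currently are, and it is this that lets us bypass the intra-column correlations of the hard-core model without invoking the quantitative correlation-decay estimate \Cref{lem:RW-CD}.
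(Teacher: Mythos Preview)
Your proof is correct, and it takes a genuinely different route from the paper's.

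The paper argues one strand at a time: by \Cref{lem:increment} each $\sti{\V z}$ has IID symmetric increments, so the corridor estimate \eqref{eq:Corridor} with $A=D/5$ confines every strand to a band of width $\tfrac{D}{5}\sqrt{m\log m}$ around its starting height, with failure probability $m^{-cD^{2}}$ after a union bound over the $m$ strands. Two strands whose initial heights differ by at least $\tfrac{D}{2}\sqrt{m\log m}$ then stay at cyclic distance at least $(\tfrac{D}{2}-\tfrac{2D}{5})\sqrt{m\log m}=\tfrac{D}{10}\sqrt{m\log m}$ throughout. You instead work directly with the \emph{difference} process $\tilde D_{j}$, observe that it is a bounded-increment martingale (only the mean-zero marginals are needed, not independence of $\phi_{\V z_{j}}$ and $\phi_{\V w_{j}}$ within a column), and apply Azuma--Hoeffding plus a union bound over $O(m^{3})$ pairs.

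Both arguments yield the same $1-m^{-cD^{2}}$ conclusion. The paper's version is marginally cleaner bookkeeping-wise, since controlling $m$ individual strands automatically controls all pairs, whereas you union bound over pairs and must absorb an extra $m^{2}$ factor; it also recycles \Cref{lem:Corridor} verbatim. Your approach, on the other hand, is more intrinsic: it never needs the IID structure of a single strand, only the symmetry of the hard-core marginal at each site, and thereby gives a clean illustration of how the intra-column correlations of the hard-core model can be handled without ever invoking the quantitative decay of \Cref{lem:RW-CD}. One small terminological point: what you use is the \emph{reflection} symmetry of the hard-core column (which gives $\E[\phi_{\V z_{j}}\mid\cc F_{j}]=0$), not merely translation invariance.
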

\begin{proof}
  The event $S^{0} (D)$ entails that
  $d^{-}(\st{\V{x}},\st{\V{x}'})\geq D\sqrt{m\log m}$ for all
  $\V{x}$. By \eqref{eq:Corridor} with $A=\frac{D}{5}$ and a union
  bound, there exists $c > 0$ such that each strand $\sti{\V{x}}$ is
  contained in a corridor of width $\frac{D}{5}\sqrt{m\log m}$ with
  probability $1-m^{-cD^{2}}$ if $D$ is chosen large enough.  Since
  $\eta'= D/2$ the conclusion follows.
\end{proof}
\begin{remark}
  \label{rem:S1}
  In the sequel we set $S^{1}(D)$ to be the event that $S^{0} (D)$
  occurs, that $\Phi$ does not contain a global shift, and that the
  conclusion of Lemma~\ref{lem:safezone} holds. For $D$ large,
  $S^{1}(D)$ occurs with probability $1-o(m^{-\frac{D}{2}})$.
\end{remark}

\subsection{Global traversals and first properties}
\label{sec:GTn}

This section introduces and discusses global traversals, the basic
mesoscopic unit of our arguments.

\subsubsection{Definition of global traversals}
\label{sec:GT-def}

We would like to group strands into consecutive sequences that
`vertically span' the system. The next definition is one way to
achieve this.
\begin{definition}
  \label{def:GTn}
  Let $\Gamma = \Gamma(m) = \floor{\frac{m}{C(m)}-2m^{1/4}}$. The
  \emph{global traversal} $\gt{\V{x}}$ starting at $\V{x}$ is the
  sequence of $\Gamma$ consecutive strands starting with $\st{\V{x}}$,
  provided these strands are all distinct. If they are not distinct we
  say there is no global traversal at $\V{x}$.
\end{definition}
The choice of $\Gamma$ reflects that the expected vertical
displacement along each strand is $C(m)$.  This implies the expected
vertical displacement after $\Gamma+2m^{1/4}$ strands is zero.
Roughly speaking, then, this is the number of strands after which a
cycle has an appreciable chance to close. The omission of $2m^{1/4}$
strands in the definition of a global traversals is for technical
convenience: it ensures that the first and last strands of a global
traversal do not interact with one another.

Given $\V{x}$, recall that $\cycle(\V{x})$ denotes the cycle of $\Phi$
that contains $\V{x}$. Let $K(\V{x})$ denote the integer number of
strands contained in $\cycle(\V{x})$, and set
\begin{align}
  \{\cycle(\V{x})\}_{\gtt}=\frac{K(\V{x})\!\!\!\mod  \Gamma}{\Gamma}\,, \quad
  \floor{\cycle(\V{x})}_{\gtt}=\frac{K(\V{x})-\{\cycle(\V{x})\}_{\gtt}
  \Gamma}{\Gamma}\,.
\end{align}
These are the number of fractional and complete global traversals in
$\cycle(\V{x})$, respectively. To make this more explicit, note
$K(\V{x})$ is the number of vertices that $\cycle(\V{x})$ contains in
the column containing $\V{x}$. Let $(\V{x}_{i})_{i=0}^{K(\V{x})-1}$
denote the sequence of these vertices, starting from
$\V{x}$. Partition $\cycle(\V{x})$ into a sequence of distinct global
traversals $\gt{\V{x}_{j\Gamma}}$ for
$j=0,\dots, \floor{\cycle(\V{x})}_{\gtt}$ together with a sequence of
$\{\cycle(\V{x})\}_{\gtt}$ consecutive strands starting from
$\V{x}_{\floor{\cycle(\V{x})}_{\gtt}\Gamma}$ and ending at
$\V{x}$. This decomposition of $\cycle(\V{x})$ may depend on $\V{x}$,
but this ambiguity will cause no trouble: the key point is that
$\{\cycle(\V{x})\}_{\gtt}$ and $\floor{\cycle(\V{x})}_{\gtt}$ are
independent of $\V{x}$.

\subsubsection{Existence and measurement of global traversals}
\label{sec:first-prop-glob}

\newcommand{\istrand}{\tilde Y}

We will shortly show that each cycle contains at least one global
traversal. This is a key step in reducing the study of cycles to the
study of global traversals. To establish this we consider first
consider long sequences of strands, which allows for a reduction to a
statement about random walks. That is, set
\begin{equation}
  \label{eq:strint}
  Y_{nk+j}(\V{x}) = \sum_{\ell=0}^{k-1}\sti{\V{x}_{\ell}}_{n-1} +
  \sti{\V{x}_{k}}_{j}, \qquad k\in\ccb{\frac{\Gamma}{100D}},\quad j\in\ccb{n},
\end{equation}
so $Y$ is the random walk obtained by following the increments
along $\Gamma/100D$ consecutive strands. Note that $Y$ does not have
independent increments due to the dependence between strands. Let
$\istrand$ be the random walk with IID increments obtained by
concatenating $\Gamma/100D$ independent copies of $\sti{\V{x}}$.

\begin{lemma}
  \label{lem:GTfrac}
  Let $\mathcal{E}$ be the event that $Y=\istrand$ under the optimal
  coupling of $Y$ and $\istrand$. There is an $r>0$ such that
  $\P\cb{\mathcal{E}\mid S^{1} (D)}\geq 1-e^{-r\sqrt{m\log m}}$.
\end{lemma}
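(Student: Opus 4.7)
\textbf{The plan} is to bound the total variation distance between the conditional law of $Y$ given $S^{1}(D)$ and the law of $\tilde Y$; since the optimal coupling realises this TV distance as $1-\P[\mathcal{E}\mid S^{1}(D)]$, the claimed lower bound follows once we show this TV distance is at most $e^{-r\sqrt{m\log m}}$. The target estimate is $nK \cdot c_{1}e^{-c_{2}D\sqrt{m\log m}}$ where $K=\Gamma/(100D)=O(\sqrt{m/\log m})$; since $nK=O(m^{3/2})$, the polynomial prefactor is dominated by the exponential and can be absorbed by taking any $r<c_{2}D$ and $m$ large.

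\textbf{The key geometric inputs} are three. First, working on the no-global-shift event implicit in $S^{1}(D)$, the arrows in different columns are independent (\Cref{prop:HC-Rep}, \Cref{rem:HC-Perm}), so the $Kn$ increments of $Y$ factor as $n$ independent column-blocks of $K$ arrows each. Second, each strand visits each column exactly once, so exactly $K$ arrows are queried per column, one per strand. Third, on $S^{1}(D)\subseteq S^{0}(D)$, consecutive strands in a column are vertically separated by between $D\sqrt{m\log m}$ and $(4+\tfrac{3D}{2})\sqrt{m\log m}$, so over $K$ strands the total vertical span inside a column is $K(4+\tfrac{3D}{2})\sqrt{m\log m}=O(m/D)\ll m$ for $D$ large. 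No wraparound occurs in $C_{m}^{\star}$, and any two of the $K$ query sites in a column lie at cyclic distance at least $D\sqrt{m\log m}$. Within each column I would then iterate \Cref{lem:RW-CD} (using the spatial Markov / transfer-matrix structure of the hard-core model on $C_{m}^{\star}$ to upgrade pointwise decay into a per-step TV bound) to control the total variation between the joint law of the $K$ queried arrows and the product of their marginals by $Kc_{1}e^{-c_{2}D\sqrt{m\log m}}$. Summing over $n$ independent columns gives the stated TV bound between $Y\mid S^{1}(D)$ and $\tilde Y$.

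\textbf{The main obstacle} is the tension between conditioning on the global event $S^{1}(D)$ and the column-wise factorisation: strictly, $S^{1}(D)$ couples arrows across columns through the strand-endpoint constraints. I would handle this by executing the coupling sequentially rather than all at once. Reveal $Y$'s arrows in strand order, tracking a ``good'' event $G$ that the queried positions remain pairwise separated by $D\sqrt{m\log m}$ as they are exposed; $G$ is implied by $S^{1}(D)$, so the per-step correlation-decay bound governs each use of optimal coupling whenever $S^{1}(D)$ holds. A secondary technical point is that \Cref{lem:RW-CD} is phrased as a pointwise bound rather than directly a TV bound; promoting it to the required single-site TV bound per step uses the uniform lower bound on hard-core marginals (since $\lambda=\lambda(a)>0$, $\P_{HC}[\sigma_{x}=w]$ is bounded away from $0$) together with the fact that $A=\{\V{z}_{t}\}$ in each application is a single site, so the per-step cost remains of order $e^{-c_{2}D\sqrt{m\log m}}$ after averaging over the history.
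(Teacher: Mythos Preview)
Your proposal is correct and follows essentially the same route as the paper: on $S^{1}(D)$ the $\Gamma/(100D)$ strands stay pairwise separated by at least $D\sqrt{m\log m}$ in every column (with total vertical span at most $m/2$, hence no wraparound), so by the correlation decay of \Cref{lem:RW-CD} each increment of $Y$, conditional on the previously revealed arrows, is within total variation $\exp(-\Theta(\sqrt{m\log m}))$ of the corresponding IID increment of $\tilde Y$; a union bound over the at most $2m^{3/2}$ increments finishes. The paper's proof is just this, stated tersely as a per-increment coupling plus union bound. Your initial column-wise framing is unnecessary once you pivot (as you do) to the sequential reveal, and your care about promoting the covariance bound of \Cref{lem:RW-CD} to a conditional-law total-variation bound via the spatial Markov property of the one-dimensional hard-core model is exactly the point the paper leaves implicit.
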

\begin{proof}
  We prove we can couple $Y$ and $\tilde Y$ with probability
  $1-Re^{-r\sqrt{m\log m}}$ for some $r,R>0$ on $S^{1} (D)$. On
  $S^{1} (D)$ the maximal upward displacement of $Y$ is
  $\frac{\Gamma}{100D}(4+\frac{3D}{2})\sqrt{m\log m}\leq \frac{m}{2}$,
  so the last strand of $Y$ is at least distance
  $D\sqrt{m\log m}$ from the first strand in every column. Thus
  the total variation distance between an increment of $Y$ and an
  increment of $\istrand$ on $S^{1} (D)$ is at most
  $\exp(-\Theta( \sqrt{m\log m})$ by \Cref{lem:RW-CD}. The
    implicit constant in the $\Theta$ notation depends only on the
    constants $c_{1},c_{2}$ from \Cref{lem:RW-CD}. A union bound
  over the $\frac{\Gamma}{100D}n\leq 2m^{3/2}$ increments gives the
  result.
\end{proof}

\begin{lemma}
  \label{lem:GTnD}
  There are $R,r>0$ such that
  \begin{equation}
    \label{eq:GTnD}
    \P\cb{\text{exists $\V{x}$ such that
        $\floor{\cycle(\V{x})}_{\gtt}=0$}| S^{1} (D)}\leq Re^{-r(\log m)^{3/2}}.
  \end{equation}
  Moreover, if $\V{x}'$ denotes the first vertex of the last strand of
  $\gt{\V{x}}$, then we further have that
  \begin{equation}
    \label{eq:GTnD1}
    m^{3/4}(\log m)^{1/2}(1+o(1))\leq d(\st{\V{x}},\st{\V{x}'})\leq 3
    m^{3/4}(\log m)^{1/2}
  \end{equation}
  with probability $1-Re^{-r(\log m)^{3/2}}$.
\end{lemma}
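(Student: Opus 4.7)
The plan is to reduce both claims to the behaviour of the cumulative vertical walk $Y$ from~\eqref{eq:strint}, coupling it to the IID walk $\istrand$ via \Cref{lem:GTfrac}. Since $\cycle(\V{x})$ closes after $K$ consecutive strands exactly when $KC(m)+Y_{Kn}\equiv 0\pmod{m}$, the event $\floor{\cycle(\V{x})}_{\gtt}=0$ is that this congruence holds for some $K\in\{1,\ldots,\Gamma-1\}$, while the distance claim reduces to estimating $m-(\Gamma-1)C(m)-Y_{(\Gamma-1)n}$ modulo $m$.

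First I would eliminate small $K$ deterministically using $S^1(D)\subseteq S_U(4+3D/2)$. Via \Cref{prop:Strand-Gap} this inclusion forces every per-strand vertical increment to lie within $R\sqrt{m\log m}$ of $C(m)$ for some $R=R(D)$, so the cumulative displacement after $K$ strands lies in the interval $\bigl(K(C(m)-R\sqrt{m\log m}),\,K(C(m)+R\sqrt{m\log m})\bigr)$, which sits strictly inside $(0,m)$ for all $K\leq K_1=\Theta(\sqrt{m/\log m})$, ruling out closure. For $K\in[K_1,\Gamma-1]$ closure instead requires $Y_{Kn}$ to take the specific value $m-KC(m)$, which by construction has magnitude at least $2m^{1/4}C(m)$. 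Applying \Cref{lem:GTfrac} in $O(D)$ successive chunks couples $Y$ to $\istrand$ except on an event of probability at most $e^{-r'\sqrt{m\log m}}$, and a Petrov-type moderate-deviation bound of the form used in \Cref{lem:Corridor} then gives
\[
\P[\istrand_{Kn}=m-KC(m)]\leq\frac{C}{\sqrt{Kn}}\exp\Bigl(-\frac{(m-KC(m))^2}{2Kn\sigma^2}\Bigr).
\]
Using $m-KC(m)\geq 2m^{1/4}C(m)$, $Kn\leq\Gamma n=O(m^{3/2}/\sqrt{\log m})$ and $C(m)^3/m^{3/2}=(C')^3(\log m)^{3/2}$, the exponent is at least $c(C')^3(\log m)^{3/2}$ across this range, with the weakest bound at $K=\Gamma-1$. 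A union bound over $K$ and over the $O(m^2)$ starting vertices $\V{x}$ yields the first assertion with a tail of the stated form $Re^{-r(\log m)^{3/2}}$.

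For the distance claim, $\V{x}'$ lies in the column of $\V{x}$ at vertical offset $(\Gamma-1)C(m)+Y_{(\Gamma-1)n}$. The definition of $\Gamma$ gives $m-(\Gamma-1)C(m)=\bigl(2m^{1/4}+O(1)\bigr)C(m)$, which is $\Theta(m^{3/4}\sqrt{\log m})$ with explicit constants controlled by $C'$. On the coupled event, $|Y_{(\Gamma-1)n}|=O(m^{3/4}/(\log m)^{1/4})$ with probability $1-e^{-c(\log m)^{3/2}}$, a lower-order correction than $m^{3/4}\sqrt{\log m}$; the analogous partial-strand fluctuations $\sti{\V{x}'}_j-\sti{\V{x}}_j$ governing the vertical separations in other columns are of the same order and are therefore absorbed into the $(1+o(1))$ factor. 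Combining these controls the minimum-over-columns vertical separation, and hence $d(\st{\V{x}},\st{\V{x}'})$, to within the stated window with the required probability.

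The main obstacle I anticipate is that \Cref{lem:GTfrac} only furnishes a coupling over $\Gamma/(100D)$ strands, whereas both arguments need the coupling to hold across the full $\Gamma$ strands of a global traversal. I would handle this by iterating the coupling $O(D)$ times, restarting at the endpoint of each chunk; the per-chunk failure probability $e^{-r\sqrt{m\log m}}$ is much smaller than the target $e^{-r(\log m)^{3/2}}$, so the accumulated coupling error is harmless in the union bounds above.
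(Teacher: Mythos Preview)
Your approach is essentially the paper's: reduce both claims to a moderate-deviation bound for the accumulated increment walk of length $\Gamma n$, coupled to $\istrand$ via \Cref{lem:GTfrac}. The paper does this in one stroke by bounding the maximal fluctuation of the walk over all $\Gamma$ strands by $m^{1/4}C(m)$, which simultaneously rules out closure at every intermediate $K$ and pins the final gap; your case split on small versus large $K$ is a harmless reorganisation of the same idea, and your iteration of the coupling over $O(D)$ chunks is exactly what the paper does.

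There is, however, a quantitative slip in your treatment of~\eqref{eq:GTnD1}. You claim $|Y_{(\Gamma-1)n}|=O\bigl(m^{3/4}/(\log m)^{1/4}\bigr)$ with probability $1-e^{-c(\log m)^{3/2}}$ and call this ``lower order'' than $m^{3/4}\sqrt{\log m}$. This is off by a factor of $(\log m)^{3/4}$: the walk has length $(\Gamma-1)n\asymp m^{3/2}/\sqrt{\log m}$, so to get failure probability $e^{-c(\log m)^{3/2}}$ you must allow a fluctuation of size $(\log m)^{3/4}\cdot\sqrt{(\Gamma-1)n}\asymp m^{3/4}\sqrt{\log m}$, the \emph{same} order as the deterministic gap $m-(\Gamma-1)C(m)\approx 2m^{1/4}C(m)$. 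The fluctuation therefore cannot be absorbed into a $(1+o(1))$ factor. The fix is to track the constants: the deterministic gap is $(2+o(1))m^{1/4}C(m)$, the fluctuation is at most $m^{1/4}C(m)$, and subtracting/adding gives the window $[(1+o(1))m^{1/4}C(m),\,3m^{1/4}C(m)]$, which is precisely how the paper concludes.
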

\begin{proof}
  For convenience set $\V{x}=(0,0)$. For the first statement, we must
  show that the cycle beginning at $\V{x}$ will not be explored in
  fewer than $\Gamma$ strands. We prove this by showing that
  $0\leq |\pi_{2}(\V{x}_{j})| \leq m-m^{1/4}C(m)$ for all
  $j\in \ccb{0,\Gamma}$ with the requisite probability, $\V{x}_{j}$
  representing the initial vertices of strands. On $S^{1} (D)$
  consecutive strands are at vertical distance of order
  $\sqrt{m\log m}$, so this will suffice.
  
  Decompose $\cycle(\V{x})$ into sequences of $\Gamma/100D$
  consecutive strands, and consider the first $100D$ such
  sequences. Write $W$ for such a sequence of $\Gamma/100D$
  strands. We will show that the collective fluctuations of the $W$
  cannot overcome the vertical gap created by the $-2m^{1/4}$ in the
  definition of $\Gamma$.

  To do this, treat each $W$ as a deterministic part plus $Y$, the
  random walk formed by the increments of $W$. Each strand has a
  deterministic vertical increase of $n=m+C(m)$, and hence a vertical
  displacement of $C(m)$. 
  By displacement we mean
  $|\pi_{2}(\V{x}_{j+1})-\pi_{2}(\V{x}_{j})|$. The deterministic
  vertical displacement from the first 100D sequences is therefore
  \begin{equation*}
    100D\frac{\Gamma C(m)}{100D} = \Gamma C(m) = m -
    2m^{1/4}C(m)
  \end{equation*}
  A net fluctuation of size $2m^{1/4}C(m)$ is required for the first
  and last strands to meet. We will rule out a fluctuation of size
  $m^{1/4}C(m)$.
  
  By Lemma~\ref{lem:GTfrac} the fluctuation due to each sequence of
  strands can be treated as $\istrand$, a simple random walk of the
  same length. This replacement results in an error with probability
  at most $\exp(-r\sqrt{m\log m})$. Hence by a union bound, the net
  fluctuation in the first 100D sequences can be treated as that of a
  concatenation of 100D copies of $\istrand$, i.e., a simple random
  walk of length $\Gamma n = (1+o(1))\Gamma m$. Let
  \begin{equation*}
    x = \frac{ m^{1/4}C(m) }{(\Gamma m)^{1/2}} \geq \frac{ m^{1/4}
      (C(m))^{3/2} }{m} \geq (\log m)^{3/4}.
  \end{equation*}
  By \cite[Theorem~5.23]{Petrov} (recall~\eqref{eq:Petrov}), the
  probability of a fluctuation of size $x$ is of order
  $\exp(-\Theta(x^{2})) = \exp(-\Theta ( (\log m)^{3/2}))$. This
  proves the first claim.
  For the second, note that when a fluctuation of size at most
  $x$ occurs the remaining gap is of size at least
  $(1+o(1))m^{1/4}C(m)$ and at most $3m^{1/4}C(m)$.
\end{proof}

The next lemma says that $\floor{\cycle}_{\gtt}$ is a good proxy for
the size of a cycle. 
\begin{lemma}
  \label{lem:GTintegral}
  Write $\cup_{\cycle}$ for the union over all cycles. There is an $r>0$ such that
  \begin{equation}
    \label{eq:GTintegral}
    \P\cb{ \bigcup_{\cycle} \Bigg\{
      \frac{\{\cycle\}_{\gtt}}{\floor{\cycle}_{\gtt}}\geq m^{-1/8}
      \Bigg\} \Bigg|  S^{1} (D)} \leq
    \exp(-r \sqrt{m\log m}).
  \end{equation}
\end{lemma}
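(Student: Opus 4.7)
The approach is to translate the cycle-closure condition into a modular constraint on a random walk, and then to show that this constraint pins the residue $r = K(\V{x}) \bmod \Gamma$ close to the deterministic target $2\ell m^{1/4}$, where $\ell = \floor{\cycle(\V{x})}_{\gtt}$. Since $\Gamma = \Theta(m/C(m))$ and $C(m) = \Theta(\sqrt{m\log m})$, this gives $r/(\ell\Gamma) = O(C(m)/m^{3/4}) = O(m^{-1/4}\sqrt{\log m}) = o(m^{-1/8})$.

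First I would derive the closure identity. Since each application of $\Phi$ advances the vertical coordinate by $1 + \phi$, the condition $\Phi^{Kn}(\V{x}) = \V{x}$ is equivalent to
\[
KC(m) + F \equiv 0 \pmod{m}, \qquad F := \sum_{i=0}^{Kn-1} \phi_{\V{x}_i}.
\]
Writing $K = \ell\Gamma + r$ with $r \in [0, \Gamma)$, and $\Gamma C(m) = m - \delta$ with $\delta \in [2m^{1/4} C(m),\, 2m^{1/4} C(m) + C(m)]$ from the definition of $\Gamma$, this rearranges to
\[
rC(m) - \ell\delta + F = (j - \ell)m, \qquad j \in \Z.
\]
The bad event $\{\cycle\}_{\gtt}/\floor{\cycle}_{\gtt} \geq m^{-1/8}$ together with $\{\cycle\}_{\gtt} < 1$ forces $\ell < m^{1/8}$; combined with \Cref{lem:GTnD}, we may restrict attention to $\ell \in [1, m^{1/8}]$ and $K \leq 2 m^{1/8} \Gamma$.

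Next I would couple $F$ to an i.i.d.\ symmetric random walk $\tilde F$ by iterating \Cref{lem:GTfrac} across $O(m^{1/8})$ consecutive blocks of $\Gamma/(100D)$ strands, with aggregate coupling error $\exp(-c\sqrt{m\log m})$. On the coupling-success event, the closure identity rules out the bad event as follows. If $j = \ell$, the identity forces $F = \ell\delta - rC(m)$; the bad choice of $r$ then requires $F \leq -\ell m^{7/8}(1-o(1))/2$. If $j = \ell + 1$, the identity forces $F = m + \ell\delta - rC(m)$, and the bad choice of $r$ requires $F$ to lie within $O(m^{7/8})$ of $m$. In both sub-cases the required deviation of $\tilde F$ from its mean is much larger than the typical $\sqrt{Kn}$, so a sub-Gaussian tail bound is exponentially small. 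A union bound over the $O(m^2)$ choices of $\V{x}$ and the $O(m^{1/8}\Gamma)$ values of $K$ contributes only polynomial factors, which are absorbed.

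The most delicate step is the iterated coupling: each application of \Cref{lem:GTfrac} introduces an error $\exp(-c\sqrt{m\log m})$, and combining these over many blocks while preserving the target rate requires careful bookkeeping. A secondary subtlety is that sub-case (i) with $\ell = 1$ gives the tightest constraint on the rate, so the tail bound for $\tilde F$ must be calibrated (e.g.\ via a Bennett- or Bernstein-type estimate using the true variance $2a < 1$ of the increments) to deliver the full $\exp(-r\sqrt{m\log m})$ decay.
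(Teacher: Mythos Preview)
Your approach differs substantially from the paper's. The paper argues deterministically on the high-probability event that \eqref{eq:GTnD1} holds for every global traversal (this event already being supplied by \Cref{lem:GTnD}): concatenating the $k$ completed traversals leaves a gap of size $\Theta(km^{3/4}\sqrt{\log m})$, and on $S^{1}(D)$ each strand in the fractional part closes $\Theta(\sqrt{m\log m})$ of that gap, so the fractional part contains $\Theta(km^{1/4})$ strands and hence $\{\cycle\}_{\gtt}/\floor{\cycle}_{\gtt}=\Theta(m^{1/4}/\Gamma)=O(m^{-1/4}\sqrt{\log m})$. No further coupling or tail bound is needed beyond what \Cref{lem:GTnD} has already established; your route instead rederives the fluctuation estimate from scratch via the closure identity.

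Your route could be made to work, but the case $j=\ell+1$ is mishandled and the claimed rate does not follow. With $F=m+\ell\delta-rC(m)$ and bad $r\in[m^{-1/8}\ell\Gamma,\Gamma)$, the range of $F$ is $\big((\ell{+}1)\delta,\,m-\ell m^{7/8}(1{-}o(1))\big]$, whose \emph{lower} endpoint is only $\Theta(\ell m^{3/4}\sqrt{\log m})$; this is not ``within $O(m^{7/8})$ of $m$''. For $\ell=1$ and $K\approx 2\Gamma$ one has $\sqrt{Kn}=\Theta(m^{3/4}/(\log m)^{1/4})$, so the required deviation of $\tilde F$ is merely $(\log m)^{3/4}$ standard deviations, giving at best $\exp(-\Theta((\log m)^{3/2}))$. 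No Bennett- or Bernstein-type refinement fixes this, as the obstruction is the scaling, not the variance constant. (The statement of the lemma in the paper is itself optimistic relative to what \Cref{lem:GTnD} provides; only super-polynomial decay is used downstream.) A secondary issue: iterating \Cref{lem:GTfrac} across $O(m^{1/8})$ wraps of the torus requires knowing that strands from distinct global traversals remain well-separated in each column, and this is not immediate from $S^{1}(D)$, which only controls \emph{consecutive} strands.
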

\begin{proof}
  We work on the event that~\eqref{eq:GTnD1} holds for all global
  traversals. By a union bound over all $\V{x}$ and \Cref{lem:GTnD},
  there is an $r>0$ such that this event has probability at least
  $1-\exp(-r \sqrt{m\log m})$.

  The proof of \eqref{eq:GTintegral} proceeds by a case analysis. By a
  union bound it is enough to consider a single cycle, as there are at
  most $m$ cycles. Suppose a cycle $\cycle$ has exactly
  $k=\floor{\cycle}_{\gtt}\leq m^{1/8}$ completed global
  traversals. Concatenate these completed global traversals. Equation
  \eqref{eq:GTnD1} implies the distance between the endpoint and
  initial point of the concatenation is of order
  $km^{3/4}(\log m)^{1/2}$. The number of strands that connect these
  points is of order $km^{3/4}(\log m)^{1/2}/C(m)= \Theta(km^{1/4})$,
  as each strand has a deterministic contribution of $C(m)$ plus a
  random fluctuation of at most $(4+\frac{3D}{2})\sqrt{m\log m}$. Thus
  the fraction of strands in the fractional part is of order
  \begin{equation*}
    \{\cycle\}_{\gtt}/\floor{\cycle}_{\gtt} =
    \Theta(\frac{km^{1/4}}{k\Gamma}) = \Theta(\frac{m^{1/4}(\log
      m)^{1/2}}{m^{1/2}}) 
    \leq \frac{1}{m^{1/8}}.
  \end{equation*}
  On the other hand, if $\cycle$ has exactly
  $k=\floor{\cycle}_{\gtt}> m^{1/8}$ completed global
  traversals, the claim follows since a fractional traversal consists of less than one
  global traversal.
\end{proof}

Let $|\cycle|_{\gtt}=\floor{\cycle}_{\gtt} +
\{\cycle\}_{\gtt}$. \Cref{lem:GTintegral} shows that
\begin{equation}
  \label{eq:SysSize}
  m = \Gamma\sum_{\cycle}|\cycle|_{\gtt} =
  \Gamma(1+O(m^{-1/8}))\sum_{\cycle}\floor{\cycle}_{\gtt}
\end{equation}
with high probability on $S^{1} (D)$. This says that the number of
global traversals in cycles is within $o(1)$ of the maximal number
$m/\Gamma$ of possible global traversals in the system.

\begin{remark}
  \label{rem:Sdredef}
  Henceforth we define $S(D)$ to be the sub-event of $S^{1}(D)$ on
  which the events in \Cref{lem:GTnD,lem:GTintegral} occur. Note that
  $\P\cb{S(D)} = 1-o(m^{-D/2})$.
\end{remark}

\section{Equilibrium properties II: Concentration of Contacts}
\label{sec:Contacts}

This section studies pairs of global traversals, and establishes that
the number of times they come into contact is a concentrated random
variable. We make the notion of a contact precise in
Section~\ref{sec:Contacts-Defn} and formalize concentration in
Theorem~\ref{thm:CofC}. Subsequent subsections prove
Theorem~\ref{thm:CofC}.

\subsection{Definition of contacts, global concentration}
\label{sec:Contacts-Defn}

Recall the notions of swapping and parallel arrows from
Section~\ref{sec:No-shifts}. Given $v\in V(\T^{\star}_{n,m})$,
$v= \{(x_{1},x_{2}),(x_{1},x_{2}+1)\}$ (so $v$ is a vertical edge of
$\T_{n,m}$), write $v\pm\frac12$ for $(x_{1},x_{2})$ and
$(x_{1},x_{2}+1)$.
\begin{definition}
  \label{def:contact}
  Let $v\in V(\T^{\star}_{n,m})$ be a vertical edge. There is a
  \emph{contact} at $v$ \emph{between} $v+\frac12$ and $v-\frac12$ if
  the arrows $\phi_{v\pm\frac 12}$ swap or are parallel.
\end{definition}
Contacts are the locations where the Glauber dynamics can modify the
corresponding hard-core configuration. Glauber updates at these
locations lead to cycles splitting or merging, recall
Remark~\ref{rem:Glauber-SM}.

Given $A,B\subset V(\T_{n,m})$ and a realization of
$\Phi$, the \emph{number of contacts between $A$ and $B$} is the number of
pairs $(a,b)\in A\times B$ such that there is a contact between $a$
and $b$. Given a set $A\subset V(\T_{n,m})$, the \emph{number of contacts
contained in $A$} is the number of contacts between $A$ and
$V(\T_{n,m})$. 
\begin{lemma}
  \label{lem:Strand-Contacts}
  Let $X$ be the number of contacts contained in a strand. There
  exist $c,\alpha>0$ such that 
  \begin{equation*}
    \P\cb{ \abs{X-\alpha n}\geq \gamma \alpha n\mid S(D)} \leq
    e^{-c\gamma^{2}n}, \qquad \gamma>0.
  \end{equation*}
\end{lemma}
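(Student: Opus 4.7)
The plan is to write $X$ as a sum of bounded random variables indexed by the columns that the strand visits, and to exhibit a martingale structure under which Azuma--Hoeffding applies.

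Write $\st{\V{x}}=(\V{x}_{0},\dots,\V{x}_{n-1})$ and let $\xi_{k}\in\{0,1,2\}$ denote the number of $v\in\{\V{x}_{k}+\tfrac12,\V{x}_{k}-\tfrac12\}$ at which a contact occurs. Since the strand visits each column exactly once and every contact involving $\V{x}_{k}$ lies at one of these two dual vertices, $X=\sum_{k=0}^{n-1}\xi_{k}$. Each $\xi_{k}$ is determined by the arrows $\phi_{\V{x}_{k}}$ and $\phi_{\V{x}_{k}\pm(0,1)}$, all in column $\pi_{1}(\V{x}_{k})=x_{1}+k$.

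Work under the law conditional on the absence of any global shift; by \Cref{prop:HC-Rep} the column configurations are independent, each distributed as the hard-core measure on $C_{m}^{\star}$. Let $\mathcal{F}_{k}$ be the $\sigma$-algebra generated by the arrows in columns $x_{1},\dots,x_{1}+k$, so that $\V{x}_{k+1}$ and $\xi_{k}$ are $\mathcal{F}_{k}$-measurable. Conditional on $\mathcal{F}_{k-1}$, the arrows in the column of $\V{x}_{k}$ are independent of the known vertex $\V{x}_{k}$ and have the hard-core law. Vertical translation invariance of that law then yields
\begin{equation*}
\mathbb{E}\!\left[\xi_{k}\mid\mathcal{F}_{k-1}\right]=\alpha,
\end{equation*}
a deterministic constant equal to the expected number of contacts at a fixed dual vertex. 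We have $\alpha>0$, since for example the positive-probability event that all arrows in a column are horizontal contributes two contacts at every vertex.

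Consequently $M_{k}=\sum_{j\leq k}(\xi_{j}-\alpha)$ is a martingale whose increments are bounded by $2$, and Azuma--Hoeffding yields a bound of the form $2\exp(-\gamma^{2}\alpha^{2}n/8)$ on $\mathbb{P}[\abs{X-\alpha n}\geq\gamma\alpha n]$ in the no-global-shift law. Since $S(D)$ is contained in the no-global-shift event and has relative probability tending to $1$ by \Cref{rem:Sdredef}, dividing through and absorbing the resulting constant into $c$ gives the stated estimate; the case of very small $\gamma$ where $e^{-c\gamma^{2}n}>1$ is trivial. The subtle point is the constancy of $\mathbb{E}[\xi_{k}\mid\mathcal{F}_{k-1}]$: although $\V{x}_{k}$ is genuinely random and $\mathcal{F}_{k-1}$-measurable, vertical translation invariance of the hard-core measure in the as-yet-unrevealed column of $\V{x}_{k}$ ensures the conditional expectation does not depend on the row of $\V{x}_{k}$, which is precisely what secures the martingale structure.
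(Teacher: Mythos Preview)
Your proof is correct and follows essentially the same approach as the paper: decompose $X$ as a sum over columns of bounded $\{0,1,2\}$-valued contributions, use column independence together with vertical translation invariance of the hard-core law to get identical (conditional) distributions, and apply a Hoeffding-type bound. The only difference is cosmetic: the paper observes directly that the $\xi_{k}$ are IID and invokes Hoeffding, whereas you phrase the same independence via a filtration and apply Azuma--Hoeffding; since conditional on $\mathcal{F}_{k-1}$ the next column is fresh and translation-invariant, the $\xi_{k}$ are in fact fully IID, so the martingale formulation is slightly more cautious than needed but entirely valid.
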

\begin{proof}
  Recall that on $S(D)$, we can use the underlying hard-core models on
  columns of $\T^{\star}_{n,m}$ as the source of randomness. A single
  strand is determined by $n$ IID samples from the equilibrium measure
  of the hard-core model. Each sample results in $i$ contacts with
  probability $p_{i}$ for $i=0,1,2$, $\sum_{i}p_{i}=1$,
  $\sum_{i}ip_{i}=\alpha$, and $\alpha>0$ since
  $\lambda>0$. Hoeffding's inequality then implies the claim.
\end{proof}

The previous lemma indicated that contacts are plentiful. The next
theorem indicates that they are rather uniformly distributed between
pairs of disjoint global traversals. The proof of the theorem
occupies \Cref{sec:IGC,sec:tbd}. The details of the proof will not be
needed for subsequent developments.
\begin{theorem}
  \label{thm:CofC}
  Let $\V{x}\neq \V{y}\in \T_{n,m}$. Let $X$ denote the number of
  contacts between $\gt{\V{x}}$ and $\gt{\V{y}}$ conditional on
  $\gt{\V{x}}\cap\gt{\V{y}}=\emptyset$. Then
  \begin{equation}
    \label{eq:CofC}
    \P\cb{ |X-\E X|\geq m^{1/3}\Big| S(D)}
    \leq \exp\ob{-\Theta(m^{1/7})}, \quad \E X =
    \Theta\ob{\frac{m}{\log m}}.
\end{equation}
\end{theorem}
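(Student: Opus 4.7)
The plan is to work in the hard-core representation of \Cref{rem:HC-Perm} and decompose the number of contacts into contributions from pairs of strands, one from each of $\gt{\V{x}}$ and $\gt{\V{y}}$: write $X=\sum_{k,\ell}X_{k,\ell}$ where $X_{k,\ell}$ counts contacts between the $k$th strand of $\gt{\V{x}}$ and the $\ell$th strand of $\gt{\V{y}}$. On $S(D)$ the $\Gamma$ strands of each global traversal are spaced at heights $\Theta(C(m))$ apart in every column (see \Cref{def:S} and \Cref{lem:safezone}), so only pairs whose initial vertical gap is $O(\sqrt{m})$ can contribute substantially; at most one strand of $\gt{\V{y}}$ is this close to any given strand of $\gt{\V{x}}$, so the effective sum has only $O(\Gamma)=O(\sqrt{m/\log m})$ ``close'' terms.

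For each close pair I would reduce $X_{k,\ell}$ to a local-time statistic. The vertical gap between the $k$th and $\ell$th strands, as a function of column index, evolves according to the differences of the two strands' increments (\Cref{lem:increment}); these increments are differences of $\phi$-distributed variables coming from two separate columns' hard-core configurations, hence approximately IID with finite variance. Using \Cref{lem:RW-CD} together with the column separation guaranteed by $S(D)$, the gap can be coupled to a genuine IID random walk with error exponentially small in $\sqrt{m\log m}$, exactly as in the proof of \Cref{lem:GTfrac}. The number of contacts is then, up to a $\Theta(1)$ factor from the probability that arrows at a neighbouring pair form one of the two contact patterns (either $(1,-1)$ or $(0,0)$), the local time at levels $\pm1$ of this walk over $n$ steps. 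Summing the local-time expectation $\Theta(\sqrt{n}/(1+|g|))$ over pairs with initial gap $g$ distributed essentially uniformly on $[-C(m),C(m)]$, and multiplying by the $\Gamma$ strands of $\gt{\V{x}}$, produces $\E X=\Theta(m/\log m)$.

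For the concentration bound the strategy is to combine pairwise concentration of local times with approximate independence between pairs. For a single random walk of length $n$ the local time at a fixed level is sub-Gaussian with fluctuation scale $n^{1/4}$ by standard excursion arguments, and the coupling to IID walks transfers this bound to each $X_{k,\ell}$ on $S(D)$. Different pairs share overlapping arrow variables only in narrow column bands determined by $S(D)$, and \Cref{lem:HC-CD} allows one to couple the family $\{X_{k,\ell}\}$ to independent copies at exponentially small cost. A Bernstein-type bound applied to the resulting sum of $\Theta(\Gamma)$ approximately independent, sub-Gaussian summands then yields a tail of order $\exp(-\Theta(m^{1/7}))$ at deviation $m^{1/3}$, the precise exponent being determined by balancing the truncation and coupling errors against the fluctuation scale.

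The hard part will be the coupling. While correlation decay within a single column is cleanly handled by \Cref{lem:HC-CD,lem:RW-CD}, one must simultaneously (a) decorrelate the $\Gamma^{2}$ pair processes into an essentially independent ensemble despite their shared strands, (b) maintain control of the event $\{\gt{\V{x}}\cap\gt{\V{y}}=\emptyset\}$ on which one conditions, and (c) keep every coupling error well below the $m^{1/3}$ fluctuation scale. Executing this decoupling carefully is where the bulk of the technical effort in \Cref{sec:IGC,sec:tbd} should lie.
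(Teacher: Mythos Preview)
Your concentration argument has a genuine gap. The decomposition $X=\sum_{k,\ell}X_{k,\ell}$ into strand pairs does not produce approximately independent summands, and correlation decay of the \emph{increments} cannot repair this. The gap process between a strand of $\gt{\V{x}}$ and the nearest strand of $\gt{\V{y}}$ is continuous across strand boundaries: the gap at the end of the pair $(k,\ell(k))$ \emph{is} the gap at the start of the pair $(k+1,\ell(k+1))$, up to the switch $\ell(k+1)=\ell(k)\pm 1$ when the gap crosses the corridor edge. Thus $X_{k,\ell(k)}$ is determined by the starting gap $g_{k}$ together with the increments along strand~$k$, and $g_{k}$ is a cumulative functional of \emph{all preceding} increments. \Cref{lem:RW-CD} lets you decorrelate the increment sequences, but it does nothing to decorrelate the $g_{k}$, so $\{X_{k,\ell(k)}\}_{k}$ behaves like a Markov chain, not an approximately IID family. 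Compounding this, the local time at a fixed level of a length-$n$ walk has fluctuations of order $\sqrt{n}$ (it is asymptotically $\sqrt{n}$ times a half-normal), not $n^{1/4}$; so even granting independence, the standard deviation of the sum would be of order $\sqrt{\Gamma}\cdot\sqrt{n}\asymp m^{3/4}(\log m)^{-1/4}$, much larger than the target deviation $m^{1/3}$. (The expectation formula $\Theta(\sqrt{n}/(1+|g|))$ is also off; the correct per-strand mean is $\Theta((\sqrt{n}-|g|)_{+})$, which incidentally does integrate to the right $m/\log m$.)

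The paper avoids this by treating the gap as a \emph{single} process over the whole global traversal rather than slicing by strands. An exploration procedure (\Cref{sec:expl}) tracks the gap, switching which strand of $\gt{\V{y}}$ is followed when the gap leaves a window of width $\eta(m)=\Theta(\sqrt{m\log m})$; this realises the corridor structure. The process is then cut not at strand boundaries but at successive returns to the fixed levels $\sqrt{m}$ and $\eta(m)$, producing ``inner'' and ``outer'' process steps. During inner steps the gap is coupled to an explicit Markov chain on $\nwithoutzero$ (the ideal gap chain of \Cref{sec:IGC}), and the (approximate) strong Markov property at these fixed levels makes successive process steps genuinely independent. The number of visits to state~$1$ during an inner step has an explicit compound-geometric law (\Cref{lem:IGC-Inner-Contacts}), and a Bernstein-type bound on the doubly-indexed sum (\Cref{lem:SC}) then gives~\eqref{eq:CofC}. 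The missing ingredient in your proposal is precisely this regeneration structure: without returns to a fixed state there is no independence on which to base a concentration inequality at the required scale.
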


\subsection{The ideal gap chain}
\label{sec:IGC}

The \emph{ideal gap chain} $Z\colon\nwithzero \to \nwithoutzero$ is a
time-homogeneous Markov chain that offers an idealized description of
the vertical distance between two strands. In defining $Z$ we make use
of the hard-core model on $\Z+1/2$ as introduced in
\Cref{sec:HC-CD}. The shift by $1/2$ is made with
Figure~\ref{fig:Dual} and \Cref{rem:HC-Perm} in mind: each hard-core
configuration defines a corresponding arrow configuration $\psi$ on~$\Z$. The trajectories determined by a sequence of these arrow
configurations will be our idealization of strands, and $Z$ the
distance between two such strands. We now make this precise.

\begin{definition}
  \label{def:IGC}
  Let $\sigma$ denote the hard-core model on $\Z + 1/2$ with activity
  $\lambda$ given by~\eqref{eq:HCactivity}.  Let
  $p=\P\cb{\sigma_{1/2}=1}\in (0,1/2)$ denote the probability of any
  particular vertex being occupied.  Let
  $q_i(a) = \P \cb{\sigma_{i+1/2}=1 \vert \sigma_{a}=1}$ for
  $a \in \{-1/2,1/2 \}$, and
  $q_i(0) = \P \cb{\sigma_{i+1/2}=1 \vert \sigma_{-1/2}=\sigma_{1/2}=0}$.
  The \emph{ideal gap chain $Z\colon \N\to\nwithoutzero$} is the
  time-homogeneous Markov chain with transition probabilities
  ($i\geq 2$)
\begin{equation*}
        \P \cb{ Z_{1} - Z_{0} = j \big\vert Z_{0} = i } 
=
   \begin{cases}
    \, \, p q_i(-1/2)   & j=2 \\
    \,    \, (1-2p) q_i(0)  + p \big( 1 - q_i(-1/2) - q_{i-1}(-1/2) \big) & j=1 \\
      \,  \, (1-2p) \big(1 - q_i(0) - q_{i-1}(0)\big) +
      p\big(q_i(1/2)+ q_{i-1}(-1/2)\big)  & j=0 \\ 
      \,  \,  (1-2p) q_{i-1}(0)  + p \big( 1 - q_i(1/2) - q_{i-1}(1/2) \big) & j=-1 \\
      \,  \, p q_{i-1}(1/2)  & j=-2
    \end{cases}.
\end{equation*}
The last possibility does not occur if $i=2$ since $q_{1}(1/2)=0$. If
$i=1$ the transition probabilities are 
\begin{equation*}
        \P \cb{ Z_{1} - Z_{0} = j \big\vert Z_{0} = 1}
= 
   \begin{cases}
    \, \, p q_1(-1/2)   & j=2 \\
    \,    \, (1-2p) q_1(0)  + p \big( 1 - q_1(-1/2) - q_{0}(-1/2) \big) & j=1 \\
    \,  \, (1-2p) \big(1 - q_1(0)\big) + p  & j=0 
    \end{cases}.
\end{equation*}
\end{definition}

Let $\psi=(\psi_{i})_{i\in\Z}$ denote the arrow configuration on $\Z$
associated to the hard-core model on $\Z+\frac12$, and let
$\Psi = (\psi^{j})_{j\in\N}$ denote a sequence of independent copies
of $\psi$. Thus $\psi^{j}_{i}\in \{-1,0,1\}$ is the arrow at $i\in\Z$
in the $j$\textsuperscript{th} copy of $\psi$, and for
$(i,j)\in \Z\times\N$, $\Psi((i,j))=\psi^{j}_{i}$.  Set
$\Psi^{0}(i)=i$, and for $j\geq 0$ set
$\Psi^{j+1}(i)=\Psi^{j}(i) + \psi^{j}_{\Psi^{j}(i)}$.  Thus
$(\Psi^{j}(i))_{j\in\nwithzero}$ is the analogue of a strand.  More
precisely, this is the analogue of $\sti{\V{x}}$ (recall
\eqref{eq:increment-explicit}), but the omission of the deterministic
$+1$ will be irrelevant since we will be considering the distance
between two strands.

\begin{lemma}
  \label{lem:idealgap}
  Let $i \in \nwithoutzero$. Let
  $X\colon \nwithzero \to \nwithoutzero$ be given by
  $X_{j}=\big\vert \Psi^{j}(i)-\Psi^{j}(0) \big\vert$.  Then
  $X$ is equal in law to the ideal gap chain~$Z$ with $Z_{0}=i$.
\end{lemma}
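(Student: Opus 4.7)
The plan is to derive $X$'s Markov property from the IID-in-time structure of $\Psi$ combined with the spatial symmetries of the hard-core law, and then verify the transition probabilities by explicit case analysis using the dictionary of \Cref{rem:HC-Perm}. Since the $\psi^j$ are IID in $j$, the pair $(\Psi^{j+1}(0), \Psi^{j+1}(i))$ is a function of $(\Psi^j(0), \Psi^j(i))$ together with the independent configuration $\psi^j$; this pair is thus a time-homogeneous Markov chain on $\Z^2$. Translation and reflection invariance of the hard-core law on $\Z + 1/2$ then imply that the conditional distribution of $X_{j+1}$ given the past depends only on $X_j$, so $X$ is a time-homogeneous Markov chain. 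Moreover, two strands with gap $\geq 2$ can close by at most $2$ in one step, and closing by exactly $2$ requires $\sigma_{1/2} = \sigma_{i - 1/2} = 1$, which is forbidden when $i = 2$ (adjacent sites); combined with the $i=1$ case discussed below, this shows $X$ takes values in $\nwithoutzero$.

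\textbf{Transition probabilities.} Fix $X_j = i$ and translate so $(\Psi^j(0), \Psi^j(i)) = (0, i)$. The arrows $\psi_0$ and $\psi_i$ are determined by the occupancies at $\{-1/2, 1/2\}$ and $\{i - 1/2, i + 1/2\}$ via $\psi_x = 1 \Leftrightarrow \sigma_{x + 1/2} = 1$, $\psi_x = -1 \Leftrightarrow \sigma_{x - 1/2} = 1$, and $\psi_x = 0$ otherwise. For each possible gap change $\Delta \in \{-2, -1, 0, 1, 2\}$, I would enumerate the pairs $(\psi_0, \psi_i)$ that produce $\Delta$ and compute their joint probabilities. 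These reduce to two templates: $\P\cb{\sigma_a = 1, \sigma_b = 1} = p \cdot q_\cdot(\cdot)$ with the index determined by the positions of $a \in \{-1/2, 1/2\}$ and $b \in \{i - 1/2, i + 1/2\}$; and $\P\cb{\sigma_a = 1, \sigma_{i-1/2} = 0, \sigma_{i+1/2} = 0} = p(1 - q_i(\cdot) - q_{i-1}(\cdot))$, where conditional on $\sigma_a = 1$ the events $\{\sigma_{i-1/2} = 1\}$ and $\{\sigma_{i+1/2} = 1\}$ are disjoint by the hard-core constraint. The cases with $\psi_0 = 0$ introduce the factor $(1-2p) = \P\cb{\sigma_{-1/2} = \sigma_{1/2} = 0}$ and the conditional probabilities $q_\cdot(0)$ through analogous computations. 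Summing these sub-cases line by line reproduces the stated transition kernel.

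\textbf{Special cases.} Two small points explain the kernel at small $i$. When $i = 2$, sites $1/2$ and $3/2 = i - 1/2$ are adjacent in the hard-core graph, so $q_1(1/2) = 0$; this kills the $\Delta = -2$ transition, which would have required $(\psi_0, \psi_i) = (1, -1)$ and hence $\sigma_{1/2} = \sigma_{3/2} = 1$. When $i = 1$, sites $1/2$ and $i - 1/2 = 1/2$ coincide; if $\sigma_{1/2} = 1$ (probability $p$) then $\psi_0 = 1$ and $\psi_1 = -1$, so the two strands swap positions rather than colliding, and the gap remains $1$. This swap event contributes the $+p$ term in the $\Delta = 0$ transition from $Z_0 = 1$, and the remaining transitions are computed from the three relevant sites $\{-1/2, 1/2, 3/2\}$ by the same templates. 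The main obstacle is bookkeeping rather than anything conceptual; distinguishing the swap from a collision at $i = 1$ is the one place requiring real care.
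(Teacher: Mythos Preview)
Your proposal is correct and takes essentially the same approach as the paper: both reduce to a one-step case analysis of the hard-core occupancies near positions $0$ and $i$, with the paper organizing the computation by first conditioning on the arrow at $0$ (a three-sided coin for the state of $\sigma_{\pm 1/2}$) and then reading off the conditional arrow at $i$, while you organize by the increment $\Delta$ and enumerate $(\psi_0,\psi_i)$ pairs. You are more explicit than the paper about deriving the Markov property from translation and reflection invariance of the hard-core law, and about the $i=1$ swap, both of which the paper either leaves implicit or omits.
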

\begin{proof}
  We consider $i\geq 2$; the case $i=1$ is similar and we omit the
  details. Since there is an independent hard core model $\sigma$ for
  each $j\in \nwithzero$, it is enough to verify that $X_{1}-X_{0}$ is
  distributed as $Z_{1}-Z_{0}$.

  Flip a three-sided coin with outcomes of probability $p$, $p$ and
  $1-2p$ called heads, tails and blank. If it is heads, condition
  $\sigma$ to contain $1/2$. Let $q_j(1/2)$ denote the conditional
  probability that $j + 1/2$ is in $\sigma$.  Then the conditional
  distribution of $Z_{1}-Z_{0}$ equals $0$ with probability
  $q_i(1/2)$; $-1$ with probability $1 - q_i(1/2) - q_{i-1}(1/2)$; and
  $-2$ with probability $q_{i-1}(1/2)$.

  If the outcome is tails, condition $\sigma$ to contain $-1/2$. Let
  $q_j(-1/2)$ denote the conditional probability that $j + 1/2$ is in
  $\sigma$.  Then the conditional distribution of $Z_{1}-Z_{0}$ equals
  $2$ with probability $q_i(-1/2)$; $1$ with probability
  $1 - q_i(-1/2) - q_{i-1}(-1/2)$; and $0$ with probability
  $q_{i-1}(-1/2)$.

  If the outcome is blank, condition $\sigma$ to contain neither $1/2$
  nor $-1/2$. Let $q_j(0)$ denote the conditional probability that
  $j + 1/2$ is in $\sigma$.  Then the conditional distribution of
  $Z_{1}-Z_{0}$ equals $1$ with probability $q_i(0)$; $0$ with
  probability $1 - q_i(0) - q_{i-1}(0)$; and $-1$ with probability
  $q_{i-1}(0)$.
\end{proof}

\begin{lemma}
  \label{lem:gapchainrep}
 There are independent events $(S_{k})_{k\in\nwithzero}$ such that the
 ideal gap chain satisfies
 \begin{equation}
   \label{eq:gapchainrep}
  Z_{k+1} - Z_k
 = ( \tilde X_k +  \tilde X_k') {\bf 1}_{S_k} + E_k {\bf 1}_{S_k^c} \, , 
\end{equation}
 where
 \begin{enumerate}
 \item $\tilde X_k, \tilde X_k'$, $k \in \nwithzero$, are independent random
   variables that share a symmetric and non-degenerate
   distribution~$\mu$ taking values in $\{-1,0,1\}$;
 \item $E_k$, $k \in \nwithzero$, are independent random variables
   taking values in $\llbracket -2,2 \rrbracket$; and
 \item there are constants $c,C>0$ such that the events $S_j$ satisfy,
   for $k,k',j \in \nwithzero$,
 \begin{equation*}
 \P \cb{S_k^c \big\vert Z_{k}
 = j }
 =
 \P \cb{ S_{k'}^c \big\vert Z_{k'}
 = j } \leq C e^{-cj} \, \, \, \textrm{for $j \in \nwithzero$} \,.
\end{equation*}
\end{enumerate}
\end{lemma}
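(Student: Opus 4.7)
By \Cref{lem:idealgap} we may view the ideal gap chain as $Z_k = \big\vert \Psi^k(i) - \Psi^k(0) \big\vert$, driven by an IID sequence $\Psi = (\psi^k)_{k \in \nwithzero}$ of arrow configurations arising from independent hard-core models on $\Z+\tfrac12$. The increment $Z_{k+1} - Z_k$ is determined by the pair of arrows
\[
A_k = \psi^k\bigl(\Psi^k(0)\bigr), \qquad B_k = \psi^k\bigl(\Psi^k(i)\bigr),
\]
both read off the \emph{single} configuration $\psi^k$, at positions separated by distance $Z_k$. Let $\mu$ denote the marginal law of a single arrow: $\mu(\pm 1)=p$ and $\mu(0)=1-2p$, which is symmetric and non-degenerate since $p \in (0,1/2)$.

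The analytical heart of the argument is correlation decay. Translating arrow values to hard-core occupations via \Cref{rem:HC-Perm} and applying \Cref{lem:HC-CD} (extended to $\Z$ as in \Cref{sec:HC-CD}) to the two two-vertex windows $\{\Psi^k(0)\pm\tfrac12\}$ and $\{\Psi^k(i)\pm\tfrac12\}$, which lie at distance $Z_k-1$ apart, yields
\[
\mathrm{TV}\bigl(\mathrm{Law}(A_k,B_k \mid Z_k=j),\;\mu\otimes\mu\bigr) \;\leq\; C e^{-cj}
\]
for constants $C,c>0$ depending only on the activity $\lambda$. A standard maximal coupling then supplies, on an enlarged probability space using an independent auxiliary uniform $U_k$, independent $\mu$-distributed random variables $\tilde A_k$ and $\tilde B_k$ such that on the event $S_k := \{(A_k,B_k)=(\tilde A_k,\tilde B_k)\}$ we have the announced agreement, with $\P\bigl[S_k^c \mid Z_k=j\bigr] \leq C e^{-cj}$.

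Set $\tilde X_k := \tilde B_k$ and $\tilde X_k' := -\tilde A_k$; symmetry of $\mu$ preserves both the marginals and the independence, giving~(a). For $Z_k \geq 3$ the two strands cannot cross in a single step, so $Z_{k+1}-Z_k = B_k - A_k$, which on $S_k$ equals $\tilde B_k - \tilde A_k = \tilde X_k + \tilde X_k'$. Defining $E_k := Z_{k+1}-Z_k$ on $S_k^c$ (and arbitrarily on $S_k$, where it is erased by the indicator), the bound $|A_k|,|B_k| \leq 1$ forces $E_k \in \llbracket -2,2 \rrbracket$, giving~(b). The boundary cases $Z_k \in \{1,2\}$ are handled by declaring $S_k$ to fail whenever $Z_k \leq 2$, which only enlarges the constant $C$ in~(c) since $e^{-cj}$ is bounded below on $\{1,2\}$.

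Cross-$k$ independence in~(c) follows because each step $k$ uses only the fresh randomness $(\psi^k, U_k)$, and the pairs $(\psi^k,U_k)$ are IID in $k$; time-homogeneity of $Z$ then ensures that $\P[S_k^c \mid Z_k=j]$ does not depend on $k$. The principal technical obstacle I anticipate is a careful realisation of the maximal coupling as a measurable function of $(\psi^k, U_k)$ and $Z_k$, so that the $\tilde X_k, \tilde X_k'$ have the required product marginal across all $k$ and both indices, irrespective of the path-dependent values $(Z_k)$. This bookkeeping is routine but must be done explicitly to substantiate the strong independence claims in (a)--(c).
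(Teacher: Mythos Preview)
Your proposal is correct and takes essentially the same approach as the paper: both rest on exponential correlation decay in the one-dimensional hard-core model (\Cref{lem:HC-CD}) to show that the two arrows governing the increment are nearly independent when $Z_k$ is large, with the discrepancy absorbed into $S_k^c$. The paper's proof merely records $|q_i(a)-p|\leq Ce^{-ci}$ and says the rest ``follows readily'' from \Cref{lem:idealgap}, whereas you have spelled out the maximal-coupling construction and correctly flagged the remaining bookkeeping needed to secure genuine independence of the $\tilde X_k,\tilde X_k'$ across~$k$.
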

\begin{proof}
  \Cref{lem:HC-CD} implies that there exist positive constants $C$ and
  $c$ such that $\vert q_i(a) - p \vert \leq C e^{-ci}$ for
  $a \in \{-1/2,0,1/2 \}$. Items (1)--(3) of \Cref{lem:gapchainrep} then
  follow readily from \Cref{lem:idealgap}. 
\end{proof}

For $j \in \nwithoutzero$ let $\tau_j = \min \{ k \in \nwithzero \mid
Z_{k} = j \}$ denote the hitting time of $j$ by~$Z$, and $\tau_{j}^{+}
= \min \{ k\in\nwithzero \mid Z_{k}\geq j\}$. The next
proposition says that the ideal gap chain behaves like a
simple random walk. 

\begin{proposition}
  \label{prop:idealgapestimates}
  For $j_1,j_2 \in \nwithoutzero$ with $j_2 \geq j_1>1$, 
  \begin{equation}
    \label{eq:ideal1}
    \P\cb{ \tau^{+}_{j_2} < \tau_1 \, \big\vert \, Z_{0} = j_1 } = \Theta \big( j_1 j_2^{-1} \big).
  \end{equation}
\end{proposition}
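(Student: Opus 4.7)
The plan is to prove the estimate by optional stopping applied to the near-martingale $f(x)=x$. From \Cref{lem:gapchainrep}, the drift $D(j):=\mathbb{E}[Z_{k+1}-Z_k\mid Z_k=j]$ satisfies $|D(j)|\leq 2Ce^{-cj}$: the symmetric part $\tilde X_k+\tilde X_k'$ contributes zero, while the exceptional event $S_k^c$ contributes at most $2\,\P[S_k^c\mid Z_k=j]\leq 2Ce^{-cj}$ in absolute value. Let $\tau:=\tau_1\wedge\tau_{j_2}^+$; a comparison with a lazy simple symmetric random walk on $\llbracket 1,j_2\rrbracket$ shows $\mathbb{E}_{j_1}[\tau]<\infty$, so optional stopping yields
\begin{equation*}
\mathbb{E}_{j_1}[Z_\tau]=j_1+\mathbb{E}_{j_1}\Big[\sum_{k=0}^{\tau-1}D(Z_k)\Big].
\end{equation*}

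Writing $N_j$ for the number of visits of $Z$ to $j$ before $\tau$, I would prove a Green's-function bound $\mathbb{E}_{j_1}[N_j]\leq C'\min(j,j_1)$ by comparison with a symmetric walk on $\llbracket 1,j_2\rrbracket$ with absorbing boundary. Given this bound, the drift correction is absolutely bounded: $\big|\mathbb{E}_{j_1}[\sum_{k<\tau}D(Z_k)]\big|\leq 2CC'\sum_{j\geq 1}j\,e^{-cj}=:K$. Since jumps have size at most two and $Z\geq 1$, we have $Z_\tau\in\{1\}\cup\{j_2,j_2+1\}$. With $h(j_1):=\P_{j_1}[\tau_{j_2}^+<\tau_1]$ and some $\theta\in[0,1]$, combining gives $h(j_1)(j_2+\theta-1)=j_1-1+O(1)$, whence $h(j_1)=(j_1+O(1))/(j_2+O(1))$ with an \emph{absolute} $O(1)$ error.

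For $j_1$ exceeding some threshold $K_0$ determined by this constant, the formula directly yields $h(j_1)=\Theta(j_1/j_2)$. For $j_1\in\llbracket 2,K_0\rrbracket$, I would apply the strong Markov property at the first exit of $\llbracket 1,K_0\rrbracket$: the finite-state problem on $\llbracket 1,K_0\rrbracket$ depends only on the fixed transition kernel of $Z$ near $1$, so the probability from $j_1$ to reach $K_0$ before $1$ lies between positive absolute constants, and combining with the estimate for $h(K_0)$ gives $h(j_1)=\Theta(1/j_2)=\Theta(j_1/j_2)$ since $j_1$ is bounded.

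The main obstacle is the Green's-function bound $\mathbb{E}_{j_1}[N_j]\leq C'\min(j,j_1)$. This is where the perturbation from a true symmetric random walk is a genuine concern: one must rule out that the exceptional increments $E_k\mathbf{1}_{S_k^c}$ accumulate enough drift near $1$ to distort the Green's function. I expect this is handled either by a direct coupling with a symmetric nearest-neighbor walk (reflected at $1$) that exploits the exponential smallness of $\P[S_k^c\mid Z_k=j]$ on $S_k^c$, or by promoting $f(x)=x$ to $f(x)=x+\psi(x)$ where $\psi$ is a bounded solution of the Poisson equation $\psi-L\psi=D$ (existence of which is ensured by the summability of $D$ along trajectories), so that $f(Z_k)$ is an honest martingale and the above computation proceeds without drift correction.
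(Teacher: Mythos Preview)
Your approach is a genuinely different route from the paper's. The paper sandwiches $Z$ between birth--death chains $Y^\pm$ (obtained by forcing the exceptional increment $E_k$ to be $+2$ or $-2$), and then uses the explicit product formula for the scale function of a birth--death chain: with $\kappa_\ell$ defined by $\kappa_{a-1}=0$, $\kappa_a=1$, and $(\kappa_{\ell+1}-\kappa_\ell)/(\kappa_\ell-\kappa_{\ell-1})=(\mu(1)+\nu_1(\ell))/(\mu(1)+\nu_3(\ell))$, one has $\P^{Y^\pm}[\tau_{j_2}^+<\tau_a\mid Y^\pm_0=j_1]=\kappa_{j_1}/\kappa_{j_2}$. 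The exponential decay of $\nu_i(\ell)$ then gives $\kappa_\ell=\chi\ell+O(1)$, and hence the $\Theta(j_1/j_2)$ estimate. Your optional-stopping framework is more elegant in presentation, and once the drift correction is bounded your computation is cleaner than the sandwiching.

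However, the gap you flag is real, and both remedies you propose lead back to essentially the same technical core. The coupling suggestion (``comparison with a symmetric walk on $\llbracket 1,j_2\rrbracket$'') is not directly available: the perturbation $E_k\mathbf{1}_{S_k^c}$ takes values in $\llbracket -2,2\rrbracket$, so $Z$ is neither dominated by nor dominates a symmetric walk, and there is no one-sided monotone coupling giving the Green's-function bound. More seriously, the Green's-function estimate $\E_{j_1}[N_j]\le C'\min(j,j_1)$ requires knowing that the escape probability from $j$ to $1$ is $\Theta(1/j)$, which is exactly the proposition you are proving; cruder bounds such as $\E_{j_1}[N_j]\le \E_{j_1}[\tau]=O(j_2^2)$ do not suffice to control $\sum_j N_j e^{-cj}$ uniformly in $j_2$. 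The Poisson-equation route---finding bounded $\psi$ with $j+\psi(j)$ harmonic---is equivalent to constructing the scale function of $Z$ and proving it is asymptotically linear. That is precisely what the paper does, except that by passing to nearest-neighbour chains $Y^\pm$ it gains access to the explicit product formula, from which asymptotic linearity follows by a one-line computation. For $Z$ itself (with jumps of size two) there is no such formula, and establishing existence and boundedness of $\psi$ would require an argument of comparable weight to the paper's sandwiching.
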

\begin{proof}
  We begin by deriving the upper bound, i.e., showing there exists a constant $C > 0$ for which
  \begin{equation}\label{e.ubtau}
    \P \cb{ \tau^{+}_{j_2} < \tau_1 \, \big\vert \, Z_{0} = j_1 }
    \, \leq \, C j_1 j_2^{-1} \, . 
  \end{equation}
  We will do this in three steps. First, we will construct a process
  $(Z'_{k})_{k\geq 0}$ that stochastically dominates
  $(Z_{k})_{k\geq 0}$. Informally, $Z'$ is the process that results
  from forcing $E_{k}=2$ in~\eqref{eq:gapchainrep}. Secondly, we will
  bound $Z'$ above by a Markov chain that can be analyzed by standard
  methods. Third, we will use this Markov chain to push $Z'$ down,
  which will imply that $Z$ visits $1$.

  For the first step, define
  \begin{equation*}
    Z'_{k+1}-Z'_{k} = (X_{k}+X_{k}'){\bf 1}_{S'_{k}} + 2\cdot {\bf 1}_{(S'_{k})^{c}},
  \end{equation*}
  where the $S'_{k}$ are independent events, with
  $\P \cb{ (S'_{k})^{c} \mid Z'_{k}=j} = \max_{i\in\ccb{0,3}} \P \cb{
    S_{k}^{c} \mid Z_{k}=j-i}$; in this definition we set
  $\P\cb{S_{k}^{c} \mid Z_{k}=i}=0$ for $i\leq 0$. Set $Z'_{0}=Z_{0}$,
  and suppose that $Z'_{k}\geq Z_{k}$. If $Z'_{k}-Z_{k}\geq 4$, then
  $Z'_{k+1}\geq Z_{k+1}$ since each process jumps by at most two. On
  the other hand, if $Z'_{k}-Z_{k}\in \ccb{0,3}$, then we may couple
  $Z'_{k+1}$ with $Z_{k+1}$ such that $S_{k}^{c}\subset (S'_{k})^{c}$,
  and hence $Z'_{k+1}\geq Z_{k+1}$.
  
  For the second step, note that $(Z'_{k})_{k\geq 0}$ with
  $Z'_{0}=j_{1}$ is equal in law to the following auxiliary process
  $(Y_{k})_{k\in \N}$ with $Y_{0}=j_{1}$, observed only at even times
  $k\in 2\nwithzero$.  Let $S(k,j)$ be events such that
  $\P\cb{S(k,j)} = \P\cb{S_{k}|Z_{k}=j}$, independent for different
  $k$. Suppose we are given the trajectory $(Y_i)_{i=0}^{2k}$.  If
  $S(2k,Y_{2k})$ occurs, set $Y_{2k+1} = Y_{2k} + X$ and
  $Y_{2k+2} = Y_{2k+1} + X'$, where $X$ and $X'$ are independent
  random variables with the symmetric law $\mu$ on $\{-1,0,1\}$
  from \Cref{lem:gapchainrep}.  If $S(2k,Y_{2k})^c$ occurs, set
  $Y_{2k+1} = Y_{2k}+1$ and $Y_{2k+2} = Y_{2k+1}+1$.

  The process $Y$ is not Markov, but we may bound it above
  stochastically by a Markov process
  $\yplus\colon \nwithzero \to \nwithoutzero$, $\yplus_0 =j_{1}$.  We
  first explain this informally. The process $Y$ flips a coin that
  lands heads with probability $\P\cb{S(2k,Y_{2k})}$ at even times,
  takes two steps $X,X'$ if the coin lands heads, and takes two steps
  $+1$ otherwise.  $\yplus$ will instead flip a coin at each time,
  taking an independent step distributed as $X$ for heads, and $+1$
  for tails. To obtain the desired stochastic domination it suffices
  to show that we can arrange that if $Y$ gets a tail at time $2k$,
  then $\yplus$ gets a tail at times $2k$ and $2k+1$, as if $Y$ gets a
  head the increments of $Y$ are dominated by those of $\yplus$.

  To achieve this, define
  $p^{+}_{k} =\max_{j\in \ccb{0,2}}
  \sqrt{\P\cb{S(k,\yplus_{k}-j)^{c}}}$. In this formula,
  $\P\cb{S(k,i)^{c}}=0$ if $i\leq 0$ by convention.  We make the
  $\yplus$ coins land heads with probabilities $1-p^{+}_{k}$, and
  tails with the complementary probability $p^{+}_{k}$. With this
  definition $\yplus$ is Markov as its transition probabilities depend
  only on $\yplus_k$, and we can couple the coins as desired.

  We proceed to the third step. Note that \Cref{lem:gapchainrep} implies
  \begin{equation*}
    \P \cb{ \yplus_{k+1} - \yplus_k = j \big\vert \yplus_k = \ell }
    =  
    \begin{cases}
    \, \, \mu(1) + \nu_1(\ell) & \textrm{for $j=1$} \\ 
     \, \, \mu(0) + \nu_2(\ell) & \textrm{for $j=0$} \\  
     \, \, \mu(1) + \nu_3(\ell) & \textrm{for $j=-1$}          
   \end{cases} \, ,
 \end{equation*}
 where $\vert \nu_i(\ell) \vert \leq C \exp \{ - c \ell \}$ for
 $i \in \intint{3}$. Choose $a$ such that $\vert \nu_i(a') \vert <1$
 for all $a'\geq a$ and $i\in\ccb{1,3}$. When $\yplus\geq a$, $\yplus$
 has the transitions of a birth-and-death chain (see,
 e.g.~\cite[Example~1.3.4]{Norris}). A standard computation, which
 we give below in \Cref{lem:vestimate}, shows the hitting times of
 $\yplus$ satisfy
 \begin{equation}
   \label{eq:yplusa}
   \P^{\yplus} \cb{\tau_{j_2}^{+} < \tau_a \Big\vert \yplus_0 =
     j_1  } \, \leq \, C j_1 j_2^{-1} \, 
 \end{equation}
 for some $C>0$. We now deduce~\eqref{e.ubtau}
 from~\eqref{eq:yplusa}. Since $Y\leq \yplus$ almost surely,
 $Y_{0}=\yplus_{0}=j_{1}$, and $\yplus$ takes jumps of size at most
 one, the equality in law of $Y$ and $Z'$ at even times implies
 \eqref{eq:yplusa} also holds for the process $Z'$ if $\tau_{a}$ is
 replaced by the hitting time of $\{a,a+1\}$. Since $Z\leq Z'$ almost
 surely, this implies the same statement for $Z$. Finally, the ideal
 gap chain has a positive probability of hitting $1$ before $j_{2}$ if
 started at $a$ or $a+1$, as can be seen by constructing an explicit
 trajectory. This completes the proof of~\eqref{e.ubtau}.

   To complete the proof of Proposition~\ref{prop:idealgapestimates}
   we must prove the complementary lower bound, that there exists
   $c > 0$ such that
   $\P \cb{ \tau_{j_2}^{+} < \tau_1 \, \big\vert \, Z(0) = j_1 } \geq
   c j_1 j_2^{-1}$. The argument is very similar to the one above, and
   so we only outline the steps. First, we repeat the above construction,
   but define $Z'$ by setting $E_{n}=-2$ and taking the maximum of
   probabilities when $Z_{k}=Z_{k}'+i$ for $i\in \ccb{0,3}$ (if such a
   jump would result in $Z'_{k}<1$, we set $Z'_{k}=1$). Second,
   introduce $\yminus\colon \nwithzero \to \nwithoutzero$ such that
   $Y \geq \yminus$ by following the specification of $\yplus$, but
   now replacing the $+1$ steps on tails with $-1$ steps, and again
   modifying the maximum. Third, the construction of an explicit
   trajectory to have $Z$ hit $1$ is replaced by the construction of
   an explicit trajectory to get $Z$ to a state $a$ above which the process
   $\yminus$ is a birth-and-death process.  The analogue of
   \Cref{lem:vestimate} for $\yminus$ then yields the sought bound.
\end{proof}

\begin{lemma}
  \label{lem:vestimate}
  In the context of the proof of \Cref{prop:idealgapestimates}, for
  $a<j_{1}<j_{2}$ we have that
  \begin{equation*}
    \P^{\yplus} \cb{ \tau_{j_2}^{+} < \tau_a \Big\vert \yplus_0 = j_1
    } = \Theta( j_1 j_2^{-1}) \, . 
  \end{equation*}
\end{lemma}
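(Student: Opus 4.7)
The plan is to exploit that $\yplus$ is essentially a symmetric nearest-neighbour random walk perturbed by exponentially small drifts, and compute its scale function (harmonic function) explicitly. Write $p_\ell, r_\ell, q_\ell$ for the probabilities of $\yplus$ jumping $+1, 0, -1$ from state $\ell$. From the explicit form of $\yplus$ given in the proof of \Cref{prop:idealgapestimates}, both $p_\ell - \mu(1)$ and $q_\ell - \mu(1)$ are of order $p_\ell^{+} \leq C e^{-c\ell}$, so that
\[
\frac{q_\ell}{p_\ell} \, = \, 1 + O\bigl( e^{-c\ell} \bigr).
\]

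I would then build a harmonic function $h \colon \ccb{a,\infty}\to [0,\infty)$ for $\yplus$ by the standard birth--death recipe: set $h(a)=0$, $h(a+1)=1$, and inductively
\[
h(\ell+1) - h(\ell) \, = \, \prod_{k=a+1}^{\ell} \frac{q_k}{p_k},
\]
which is the unique (up to affine transform) solution of $p_\ell h(\ell+1) + r_\ell h(\ell) + q_\ell h(\ell-1) = h(\ell)$. The bound on $q_k/p_k$ ensures $\sum_{k\geq a+1} \abs{\log(q_k/p_k)} < \infty$, so the infinite product converges to a strictly positive limit and the partial products are bounded above and below by positive constants uniformly in $\ell$. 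Consequently each increment $h(\ell+1)-h(\ell)$ is $\Theta(1)$ and $h(\ell) = \Theta(\ell - a)$ for $\ell \geq a$.

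Now apply optional stopping to the martingale $h(\yplus_k)$ at the stopping time $\tau = \tau_{j_2}^{+} \wedge \tau_a$. The chain restricted to $\ccb{a,j_2}$ is irreducible and finite, so $\tau$ has finite expectation and $h(\yplus_{\cdot\wedge\tau})$ is a bounded martingale. Since $\yplus$ has jumps of size at most $1$, the overshoot vanishes and $\yplus_{\tau_{j_2}^+}=j_2$ on $\{\tau_{j_2}^+<\tau_a\}$. This gives
\[
h(j_1) \, = \, h(a)\,\P\cb{\tau_a < \tau_{j_2}^+} + h(j_2)\,\P\cb{\tau_{j_2}^+ < \tau_a},
\]
so that
\[
\P^{\yplus}\cb{\tau_{j_2}^+ < \tau_a \,\big\vert\, \yplus_0 = j_1} \, = \, \frac{h(j_1) - h(a)}{h(j_2) - h(a)} \, = \, \Theta\!\left( \frac{j_1 - a}{j_2 - a} \right) \, = \, \Theta\!\left( \frac{j_1}{j_2} \right),
\]
where the last equality uses that $a$ is a fixed constant and $j_1\geq a+1$, so $j_1 - a \asymp j_1$ and $j_2 - a \asymp j_2$.

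I expect no substantive obstacle: the only real work is confirming that the product $\prod q_k/p_k$ is bounded away from both $0$ and $\infty$, which is immediate from the exponentially summable perturbation, and that optional stopping applies, which follows because $\tau$ is the exit time from a finite interval for an irreducible chain. The choice of $a$ in \Cref{prop:idealgapestimates} precisely ensures $\yplus$ is a genuine birth--death chain on $\ccb{a,\infty}$, so no boundary issues arise.
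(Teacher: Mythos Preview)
Your proposal is correct and follows essentially the same approach as the paper. The paper defines the scale function $\kappa_\ell$ by the recursion $(\kappa_{\ell+1}-\kappa_\ell)/(\kappa_\ell-\kappa_{\ell-1}) = (\mu(1)+\nu_1(\ell))/(\mu(1)+\nu_3(\ell))$ with $\kappa_{a-1}=0$, $\kappa_a=1$, invokes the standard birth--death gambler's ruin formula $\P^{\yplus}[\tau_{j_2}^+<\tau_a\mid\yplus_0=j_1]=\kappa_{j_1}/\kappa_{j_2}$, and then shows $\kappa_\ell = \ell\chi + K + \Theta(e^{-c\ell})$ from the exponentially decaying perturbation; your $h$ is the same scale function and your optional-stopping derivation is exactly how the textbook formula is proved.
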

\begin{proof}
Let $\big\{ \kappa_i : i \in \nwithzero \big\}$ be the increasing
sequence given by $\kappa_{a-1} = 0$, $\kappa_a = 1$, and
\begin{equation}
  \label{e:kappa}
 \frac{\kappa_{\ell +1} - \kappa_\ell}{\kappa_\ell - \kappa_{\ell - 1}} \, = \, \frac{\mu(1) + \nu_1(\ell)}{\mu(1) + \nu_3(\ell)} \, \, \, \textrm{for $\ell \geq a$} \, .
\end{equation}
Then (as $\tau_{j_{2}}^{+}=\tau_{j_{2}}$ and $Y^{+}$ is a standard birth-death chain, see, e.g.~\cite[Example~1.3.4]{Norris})
\begin{equation}
  \label{e:oneoverkappa}
 \P^{\yplus} \cb{ \tau_{j_2}^{+}< \tau_{a} \Big\vert \yplus_0 = j_1 } \, = \, \frac{\kappa_{j_1}}{\kappa_{j_2}} \, .
\end{equation}
Note that 
\begin{equation*}
  \frac{\kappa_{\ell +1} - \kappa_\ell}{\kappa_\ell - \kappa_{\ell -1}}
  = \big( 1 + \nu_1(\ell)\mu(1)^{-1} \big) \big( 1 + \nu_3(\ell)\mu(1)^{-1} \big)^{-1}
  = 1 + \Theta(e^{-c \ell}) \, . 
\end{equation*}
Hence, there exist constants $K \in (0,\infty)$ and  $\chi \in (0,\infty)$ such that 
\begin{equation}
  \label{e:kappalocations}
 \kappa_\ell = \ell \chi + K + \Theta(e^{-c \ell}) \, ,
\end{equation}
and inserting this into~\eqref{e:oneoverkappa} yields the lemma.
\end{proof}

Let $a\in\nwithzero$ be the state introduced above \eqref{eq:yplusa},
i.e., the state such that $Y^{\pm}$ is a birth-and-death chain when at
or above state $a$. The next lemma provides couplings of the bounding
Markov chains $Y^{\pm}$ with Brownian motions when $Y^{\pm}\geq a$;
these couplings will be useful for analysing hitting times of $Z$ in
the next section. More precisely, to circumvent the restriction
$Y^{\pm}\geq a$, let $\yypm$ denote $Y^{\pm}$ with jumps below level
$a$ suppressed (i.e., nothing happens if such a jump is attempted),
and similarly, jumps of size $0$ suppressed.  The couplings require some
notation. Write $\{\kappa^{\pm}_{i}\}_{i\geq 1}$ for the
$\kappa^{\pm}_{i}$ given by \eqref{e:kappa} with
$\kappa^{\pm}_{a-1}=0$, $\kappa^{\pm}_{a}=1$. The distinction between
$\pm$ arises from the meaning of $\mu$ and $\nu$ in
\eqref{e:kappa}. Set
\begin{equation}
  \label{eq:kappaN}
  \bar\kappa^{\pm}_{a-i}=2\kappa^{\pm}_{a}-\kappa^{\pm}_{a+i}, \quad i\geq 2,
  \qquad \text{and}\quad  \mathcal{K}^{\pm}= \big\{\kappa^{\pm}_{i}\big\}_{i\geq a}\bigcup
\big\{\bar\kappa^{\pm}_{a-i}\big\}_{i\geq 1}. 
\end{equation}
\begin{lemma}
  \label{lem:BCouple}
  There is a coupling of $\yyplus$ 
  with a Brownian motion $B\colon [0,\infty)\to \R$ such that
  \begin{enumerate}
  \item There is an increasing sequence of stopping times
    $\rho^{+}_{i}$ with $\rho^{+}_{0}=0$ such that
    $B(\rho^{+}_{i})\in \mathcal{K}^{+}$.
  \item For each $i$, $\arg B(\rho^{+}_{i})=\yyplus_{i}$,
    where $\arg
    x=a+i$ if $x=\kappa^{+}_{a+i}\in \mathcal{K}^{+}$ or $x=\bar\kappa^+_{a-i}$.
  \item There are $\beta,c>0$ such that if $b>0$ and $E^{+}(i,b)$ is
    the event that $\rho^{+}_{i}-\beta i>bi$, then
    \begin{equation}
      \label{eq:BCouple1}
      \P\cb{E^{+}(i,b)}\leq e^{-cbi}.
    \end{equation}
  \end{enumerate}
  An analogous coupling of $\yyminus$ 
  with a Brownian motion exists,
  with $\mathcal{K}^{+}$ replaced with $\mathcal{K}^{-}$.
\end{lemma}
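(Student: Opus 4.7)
The construction is a Skorokhod-style embedding of the birth–death chain $\yyplus$ into Brownian motion, in which $\kappa^{+}$ plays the role of the scale function of $\yyplus$. Indeed, the recursion~\eqref{e:kappa} was chosen exactly so that $\bigl(\kappa^{+}_{\yyplus_k}\bigr)_{k\geq 0}$ is a martingale on $\{\yyplus\geq a\}$, so Brownian hitting probabilities between consecutive $\kappa^{+}$-levels reproduce the transition probabilities of $\yyplus$ in the interior. The $\bar\kappa^{+}$-points arise naturally from folding the Brownian motion about the reflecting level $\kappa^{+}_a = 1$, which is how we encode the reflection of $\yyplus$ at $a$.

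To implement this, let $B$ be a standard Brownian motion with $B(0)\in\mathcal{K}^{+}$ chosen so that $\arg B(0) = \yyplus_0$, and put $\tilde B_t := 1 + |B_t - 1|$, a Brownian motion reflected at $1$. Set $\rho^{+}_0 := 0$; given $\rho^{+}_i$ with $\tilde B(\rho^{+}_i) = \kappa^{+}_\ell$, define $\rho^{+}_{i+1}$ as the first time after $\rho^{+}_i$ at which $\tilde B$ exits $(\kappa^{+}_{\ell-1},\kappa^{+}_{\ell+1})$ when $\ell>a$, and as the first hitting time of $\kappa^{+}_{a+1}$ by $\tilde B$ when $\ell=a$. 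Writing $p_\ell,q_\ell$ for the up and down transition probabilities of $\yyplus$ at state $\ell>a$, relation~\eqref{e:kappa} reads $(\kappa^{+}_{\ell+1}-\kappa^{+}_\ell)/(\kappa^{+}_\ell-\kappa^{+}_{\ell-1}) = q_\ell/p_\ell$, so Brownian optional stopping gives $\P\cb{\tilde B\text{ exits at }\kappa^{+}_{\ell+1}\mid \tilde B(\rho^{+}_i)=\kappa^{+}_\ell} = p_\ell$; meanwhile reflected Brownian motion from $\kappa^{+}_a = 1$ hits $\kappa^{+}_{a+1}$ almost surely, matching the forced upward move of $\yyplus$ at $a$. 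The strong Markov property then lets us couple $(\tilde B(\rho^{+}_i))_{i\geq 0}$ to equal $(\kappa^{+}_{\yyplus_i})_{i\geq 0}$, which is property (2); since $B(\rho^{+}_i)\in\{\kappa^{+}_\ell,\,2\kappa^{+}_a-\kappa^{+}_\ell\}\subseteq\mathcal{K}^{+}$ (extending the definition $\bar\kappa^{+}_{a-1}:=2-\kappa^{+}_{a+1}$), property (1) follows.

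For (3), set $\Delta_k:=\rho^{+}_k - \rho^{+}_{k-1}$. On $\{\yyplus_{k-1}=\ell\}$ with $\ell>a$, $\Delta_k$ is a Brownian exit time from an interval of length $\kappa^{+}_{\ell+1}-\kappa^{+}_{\ell-1} = 2\chi + O(e^{-c\ell})$ by~\eqref{e:kappalocations}; when $\ell=a$ it is the hitting time of $\kappa^{+}_{a+1} - 1 = O(1)$ by reflected Brownian motion. Standard estimates then give a uniform bound $\bar\mu:=\sup_\ell\E\cb{\Delta_k\mid \yyplus_{k-1}=\ell}<\infty$ and a uniformly bounded conditional moment generating function $\tilde M(\lambda):=\sup_\ell\E\cb{e^{\lambda\Delta_k}\mid \yyplus_{k-1}=\ell}<\infty$ for $\lambda\in[0,\lambda_0)$, since the MGFs of Brownian exit and hitting times from bounded intervals are finite near $0$. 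Choosing $\beta>\bar\mu$ and using conditional independence of the $\Delta_k$ given $(\yyplus_j)_j$ (strong Markov), a Chernoff bound yields
\begin{equation*}
  \P\cb{\rho^{+}_i-\beta i>bi}\;\leq\; e^{-\lambda(\beta+b)i}\,\tilde M(\lambda)^i,
\end{equation*}
and picking $\lambda\in(0,\lambda_0)$ small enough that $\lambda\beta>\log\tilde M(\lambda)$ (possible because $\log\tilde M(\lambda) = \lambda\bar\mu + O(\lambda^2)$ and $\beta>\bar\mu$) gives~\eqref{eq:BCouple1} with $c:=\lambda$. The $\yyminus$ case is identical with $\kappa^{-},\mathcal{K}^{-}$ in place of $\kappa^{+},\mathcal{K}^{+}$.

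The main obstacle is conceptual rather than computational: faithfully encoding the reflection of $\yyplus$ at $a$ inside an unconstrained Brownian motion $B$. Working with the folded process $\tilde B = 1 + |B-1|$ resolves this, and the two-to-one nature of the folding is precisely what explains why the natural image set in $B$-coordinates is $\mathcal{K}^{+}$ (the $\kappa^{+}$-levels together with their reflections about $1$) rather than just the $\kappa^{+}$-levels themselves.
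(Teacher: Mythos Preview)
Your proof is correct and is essentially the same Skorokhod-type embedding as the paper's: the paper also defines $\rho^{+}_{i}$ as the successive hitting times of adjacent points of $\mathcal{K}^{+}$, verifies that the Brownian exit probabilities match the $\yyplus$ transition probabilities via~\eqref{e:kappa}, and obtains~\eqref{eq:BCouple1} from uniform sub-exponential tails of the increments (invoking Bernstein's inequality where you use an equivalent Chernoff/MGF bound). Your explicit use of the folded process $\tilde B = 1 + |B-1|$ is a clean way to encode what the paper handles by ``small changes in indexing'' when $B$ crosses into the $\bar\kappa^{+}$ region, but the resulting stopping times and coupling are identical.
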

\begin{proof}
  Suppose $\yyplus_{0}=j\in\nwithoutzero$, $j\geq a$, and set
  $B(0)=\kappa_{j}$. We will construct the couplings for $\yyplus$ and
  $\yyminus$ in parallel (i.e., one should consistently choose the sign
  $+$ or the sign $-$). We first give the construction up to the first
  time $\tau$ that $\yypm$ 
  hits $1$, and afterward we extend the
  construction to all times. To this end, iteratively define an
  increasing sequence of times
  $\big\{ \rho^\pm_i: i \in \nwithzero \big\}$ such that
  $\P\cb{\yypm_{i+1}=k+1 \mid \yypm_{i}=k}$ is equal to
  $\P\cb{B(\rho^{\pm}_{i+1})=\kappa^{\pm}_{k+1} \mid
    B(\rho^{\pm}_{i})=\kappa^{\pm}_{k}}$. Recall $\rho^{\pm}_0 = 0$,
  and suppose an initial sequence $\rho^{\pm}_j$,
  $j \in \intinta{0}{i-1}$ has been specified such that for
  $i \in \nwithoutzero$, $B(\rho^{\pm}_j) \in \mathcal{K}^{\pm}$ for
  $j \in \intinta{0}{i-1}$. We now define $\rho^{\pm}_i$ to be the
  infimum of $t \geq \rho^{\pm}_{i-1}$ such that
  $B(t) \in \mathcal{K}^{\pm}$ with
  $B(t) \not= B\big(\rho^{\pm}_{i-1} \big)$.  With these definitions,
  since the $\kappa_{i}^{\pm}$ form an increasing sequence,
  \begin{equation}
    \label{eq:coupling}
      \frac{\P\cb{B(\rho^{\pm}_{i+1})=\kappa_{j+1}| B(\rho^{\pm}_{i})=\kappa^{\pm}_{j}}}
      {\P\cb{B(\rho^{\pm}_{i+1})=\kappa^{\pm}_{j-1}| B(\rho^{\pm}_{i})=\kappa^{\pm}_{j}}}
      = \frac{\P\cb{\yypm_{i+1}=j+1 | \yypm_{i}=j}}
      {\P\cb{\yypm_{i+1}=j-1 | \yypm_{i}=j}}
      =
      \frac{\kappa^{\pm}_{j+1}-\kappa^{\pm}_{j}}{\kappa^{\pm}_{j}-\kappa^{\pm}_{j-1}}.
    \end{equation}
    This gives our coupling up until the hitting time $\tau$ of $a$ by
    $\yypm$: $(\yypm_{i})_{i\in\nwithzero}$ is equal in law to
    $(\arg B(\rho^{\pm}_{i}))_{i\in\nwithzero}$.

    To extend the coupling to all times, note that
    $B(\rho^{\pm}_{\tau+1})$ is equally likely to be
    $\bar\kappa^\pm_{a-1}$ and $\kappa_{a+1}$. The distances between
    the $\bar\kappa^\pm_{a-i}$ are exactly as for the
    $\kappa^+_{a+i}$, and hence the coupling can be continued by
    making some small changes in indexing (the Brownian motion going
    to the left corresponds to $\yypm$ going right when at the
    $\bar\kappa$).  This establishes (1) and (2).

    To establish (3), note that the law of
    $\rho^\pm_{i+1} - \rho^\pm_i$ is a mixture of the laws of hitting
    times of Brownian motion begun at one point in $\mathcal{K}^{\pm}$
    on the set of adjacent points in $\mathcal{K}^{\pm}$.  By
    \eqref{e:kappalocations} and \eqref{eq:kappaN} the distances
    between adjacent values of $\kappa^{\pm}$ lie in a compact subset
    of $(0,\infty)$.  Hence by using the reflection principle to
    compute $\P\cb{H<x}$, each such hitting time $H$ satisfies
    $\P\cb{H > x} \leq C e^{-cx}$ for suitable $c,C>0$, and these
    constants are independent of the value of $i\in\nwithzero$. Hence
    $\P\cb{\rho^\pm_{i+1} - \rho^\pm_{i}>x}\leq Ce^{-cx}$. This
    implies that there is a $\beta\in (0,\infty)$ such that
  \begin{equation}
    \label{e:Eprobs}
    \P \cb{ \rho^\pm_{i} - \beta i > b i } 
    \leq C \exp \big\{ - c b i \big\}
  \end{equation}
  for $i \in \nwithoutzero$ and $b > 0$; the values of $c,C>0$ may
  have changed. The deduction of this bound is by writing
  $\rho^{\pm}_{i+1}= \sum_{j=0}^{i}(\rho^{\pm}_{j+1}-\rho^{\pm}_{j})$,
  dominating the increments by a sequence of IID sub-exponential
  random variables, and applying Bernstein's inequality for
  sub-exponential random variables~\cite[Theorem~2.8.1]{Vershynin}.
\end{proof}

\subsection{Concentration of Contacts}
\label{sec:tbd}

This section completes the proof of \Cref{thm:CofC}. The strategy is
to use the estimates of \Cref{prop:idealgapestimates}, which show that
the ideal gap chain behaves like simple random walk, to implement the
heuristic sketched in \Cref{sec:outline}. There are three
steps. \Cref{sec:expl} introduces an exploration procedure that
measures the distance between two global traversals. \Cref{sec:exc}
estimates how long it takes this exploration procedure to find
contacts, and \Cref{sec:cons} uses these estimates and concentration
inequalities to establish \Cref{thm:CofC}.

To lighten notation in this section we often omit rounding to nearest
integer values from the notation, e.g., writing $\sqrt{m}$ in place of
$\floor{\sqrt{m}}$.

\subsubsection{Exploration Procedure}
\label{sec:expl}
To analyse contacts between two global traversals we introduce an
exploration procedure.  Set $\V{x}=(0,x)$, $\V{y}=(0,y)$, $x\neq y$.
The choice of column $0$ entails no loss of generality, and similarly
we will assume $x<y$, i.e., $0<y-x\leq m-(y-x)$. Define
$\Phi^{k}_{2}(\V{u}) = \pi_{2}(\Phi^{k}(\V{u}))$, and (abusing
notation) write $\Phi^{k}_{2}(x)$ in place of $\Phi^{k}_{2}(\V{x})$
when $\V{x}=(0,x)$. Set
\begin{equation}
  \label{eq:ExplPre}
  \hat Z_{s,\ell} = \Phi^{s}_{2}(y) - \Phi^{\ell}_{2}(x).
\end{equation}
Note that if $s=\ell \!\!\!\mod n$, $\hat Z_{s,\ell}$ measures the vertical distance
between two strands of $\gt{\V{x}}$ and $\gt{\V{y}}$. 
Our exploration procedure uses $\hat Z_{s,\ell}$, with $s=\ell=0$ initially. We
first describe the procedure informally. Let $\eta'=\frac{D}{2}$, and
$\eta(m)=\eta'\sqrt{m\log m}$; $\eta(m)$ is a scale at which
global traversals are (relatively) close. When the global traversals
are close, the exploration procedure will examine two strands
simultaneously, via $(\hat Z_{s+r,\ell+r})_{r\geq 0}$.  When the
global traversals are not close the procedure will instead examine a
single strand from the ``lower'' global traversal, i.e.,
$(\hat Z_{s,\ell+r})_{0<r\leq n}$ if $\hat Z_{s,\ell}$ is positive,
and $(\hat Z_{s+r,\ell})_{0<r\leq n}$ if $\hat Z_{s,\ell}$ is
negative.

Fix $M\in\nwithoutzero$. To formally define the \emph{exploration
  procedure} $(\tilde Z_{k})_{k=0}^{Mn}$, set $\sigma_{0}=0$,
$\tilde Z_{0}=\hat Z_{0,0}$, and suppose that
$(\tilde Z_{k})_{0\leq k\leq \sigma_{j}}$ and
$\sigma_{0}<\dots< \sigma_{j}<Mn$ are given, with
$\tilde Z_{k}=\hat Z_{s(k),\ell(k)}$ for specified $s(k)$,
$\ell(k)$. In what follows we will write
$\tilde Z_{k}=\hat Z_{s,\ell}$, omitting that $s=s(k)$ and
$\ell=\ell(k)$. Then the exploration process continues as follows
while $\sigma_{j}<Mn$:
\begin{enumerate}
\item If $|\tilde Z_{\sigma_{j}}|<\eta(m)-1$ and $\tilde
  Z_{\sigma_{j}}=\hat Z_{s,\ell}$, then $\tilde
  Z_{\sigma_{j}+1}=\hat Z_{s,\ell+1}$ and $\tilde
  Z_{\sigma_{j}+2}=\hat Z_{s+1,\ell+1}$. Repeat this until
  \begin{equation}
    \label{eq:InnerExp}
    \sigma_{j+1} = \inf_{r>0}\Big\{\tilde Z_{\sigma_{j}+2r}\in
    \{\sqrt{m}-1,\sqrt{m},\eta(m)-1,\eta(m)\} \Big\}
  \end{equation}
  if this is at most $Mn$; otherwise set $\sigma_{j+1}=Mn$.
\item For $k\geq 0$, if $|\tilde Z_{\sigma_{j}+kn}| \geq \eta(m)-1$, then
  \begin{enumerate}
  \item if $\tilde Z_{\sigma_{j}+kn} = \hat Z_{s,\ell}>0$, then $\tilde
    Z_{\sigma_{j}+kn+r}=\hat Z_{s,\ell+r}$ for $0<r\leq n$.
  \item If $\tilde Z_{\sigma_{j}+kn} = \hat Z_{s,\ell}\leq 0$, then
    $\tilde Z_{\sigma_{j}+kn+r}=\hat Z_{s+r,\ell}$ for $0<r\leq n$.
  \end{enumerate}
  Let $k'$ be the minimal $k$ such that
  $|\tilde Z_{\sigma_{j}+kn}|<\eta(m)-1$. Set
  $\sigma_{j+1}=\sigma_{j}+k'n$ if this is at most $Mn$, and otherwise
  set $\sigma_{j+1}=Mn$.
\end{enumerate}
Thus $\tilde Z_{k}$ parametrizes $\hat Z_{s,\ell}$ in terms of the
weakly increasing sequences $s(k)$ and $\ell(k)$. Since $s$ and $\ell$
increase by at most one, this parametrization considers all
vertices in the evolution of $\V{x}$, $\V{y}$ under $\Phi$.

\begin{remark}
  \label{rem:cor}
  Strands of $\gt{\V{x}}$ are typically between two strands of
  $\gt{\V{y}}$. As $\gt{\V{x}}$ evolves, it may initially be closer to
  the lower strand of $\gt{\V{y}}$, and later be closer to the upper
  strand (and vice versa). The type (2) evolution accounts for this,
  and realizes the ``corridor'' structure present in \Cref{fig:displ}.
\end{remark}

To analyze the exploration process it will be convenient to subdivide
it into two types of \emph{process steps}, defined by a subsequence $t_{j}$
of the stopping times $\sigma_{j}$. Let $t_{0}=\sigma_{0}=0$,
$t_{1}=\sigma_{1}$, and, for $j\geq 1$,
\begin{equation*}
  t_{j+1} = Mn \wedge
  \begin{cases}
    \mathrm{argmin} \{ \sigma_{k}>t_{j} \mid |\tilde Z_{\sigma_{k}}|\in
    \{\eta(m)-1,\eta(m)\} & |\tilde Z_{t_{j}}| \leq \sqrt{m}
    \\
    \mathrm{argmin} \{ \sigma_{k}>t_{j} \mid |\tilde
    Z_{\sigma_{k}}|\leq \sqrt{m} 
    & |\tilde Z_{t_{j}}|\in
    \{\eta(m)-1,\eta(m)\}
  \end{cases}.
\end{equation*}
We think of $\ccb{t_{0},t_{1}-1}$ as an \emph{initial process step};
if $t_{1}=0$ this interval is the empty and there is no initial
process step. Subsequent intervals $\ccb{t_{i},t_{i+1}-1}$ are called
\emph{outer process steps} if
$|\tilde Z_{t_{i}}|\in \{\eta(m)-1,\eta(m)\}$, and \emph{inner process
  steps} otherwise.

We will primarily consider $\tilde Z_{k}$ up to the first time
$T=\inf \{2k \mid s(2k)\geq \Gamma n \text{ or } \ell(2k)\geq \Gamma
n\}$ that either $\gt{\V{x}}$ or $\gt{\V{y}}$ has been explored. Note
$T\leq 2\Gamma n$. Our first lemma rules out contacts during outer
process steps. Write $\gt{\V{x}}_{\ell}$ for the
$\ell$\textsuperscript{th} vertex in $\gt{\V{x}}$.
\begin{lemma}
  \label{lem:OutNoTouch}
  Suppose $\ccb{t_{i},t_{i+1}-1}$ is an outer process step of
  $(\tilde Z_{k})_{k\leq T}$. Then
  $d(\gt{\V{x}}_{s(t)},\gt{\V{y}}) \geq \sqrt{m}-1$ for all
  $t\in \ccb{t_{i},t_{i+1}-1}$ on $S(D)$.
\end{lemma}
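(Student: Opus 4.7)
The plan is to combine the defining property of the outer $t$-interval---which forces $|\tilde Z_{\sigma_k}|>\sqrt m$ at every exploration checkpoint $\sigma_k\in\ccb{t_i,t_{i+1}-1}$---with the strand-separation estimates on $S(D)$, in particular \Cref{lem:safezone}, to obtain column-wise separation of the tracked pair, and then upgrade via the corridor structure of $\gt{\V{y}}$ to separation of the tracked $\gt{\V{x}}$-vertex from \emph{all} of $\gt{\V{y}}$.

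First, I would reduce to times between checkpoints and classify the $\sigma$-sub-intervals making up $\ccb{t_i,t_{i+1}-1}$ by exploration mode. An inner $\sigma$-sub-interval changes $\tilde Z$ by at most $\pm 2$ per pair of steps, and its stopping rule keeps $|\tilde Z|$ in the range $\ccb{\sqrt m - 1,\eta(m)}$ throughout, so the tracked pair is column-wise separated by at least $\sqrt m - 1$ at every intermediate step. An outer $\sigma$-sub-interval traces one full strand of $\gt{\V{x}}$ or $\gt{\V{y}}$ while the other index is frozen, with checkpoints every $n$ steps; while the sub-interval continues, consecutive checkpoints have $|\tilde Z|\geq\eta(m)-1$, essentially $\eta'\sqrt{m\log m}$ with $\eta'=D/2$, so \Cref{lem:safezone} yields column-wise separation $\Theta(D\sqrt{m\log m})\gg\sqrt m$ between the traced strand and the strand of the other global traversal containing the frozen vertex.

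Next, I would upgrade column-wise separation of the tracked pair into separation of the tracked $\gt{\V{x}}$-vertex from the whole of $\gt{\V{y}}$. On $S(D)$ the consecutive strands of $\gt{\V{y}}$ are at column-wise vertical separation $\geq D\sqrt{m\log m}$, so the tracked $\gt{\V{x}}$-vertex lies in a single corridor of $\gt{\V{y}}$; its separation to the bounding strand of this corridor that carries the frozen $\gt{\V{y}}$-vertex is $\geq\sqrt m - 1$ by the preceding step, while the opposite bounding strand is at corridor-width distance $\Theta(D\sqrt{m\log m})\gg\sqrt m$, and any further strand of $\gt{\V{y}}$ is farther still.

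The main obstacle is the final strand of an outer $\sigma$-sub-interval that terminates the outer $t$-interval, where the ending checkpoint has $|\tilde Z|\leq\sqrt m$ and \Cref{lem:safezone}'s two-sided endpoint hypothesis is unavailable. For this strand one instead uses that within a single outer-mode advancement the per-step increments of $\tilde Z$ are of the form $-(\phi+1)$ modulo the torus and hence lie in a sign-definite set, so $|\tilde Z|$ evolves nearly monotonically and cannot dip below $\sqrt m - 1$ between a starting checkpoint with $|\tilde Z|\geq\eta(m)-1$ and an ending checkpoint with $|\tilde Z|\leq\sqrt m$.
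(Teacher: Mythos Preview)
Your overall decomposition into type~(1) and type~(2) sub-intervals matches the paper's proof, and your treatment of type~(1) via the stopping rule and of the generic type~(2) strand via \Cref{lem:safezone} is correct.

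However, your handling of what you call the ``main obstacle'' --- the final strand of a type~(2) sub-interval --- has a genuine error. Your monotonicity argument claims that since the per-step increments of $\tilde Z$ lie in $\{-2,-1,0\}$, the quantity $|\tilde Z|$ cannot dip below $\sqrt{m}-1$ between a checkpoint with $|\tilde Z|\geq\eta(m)-1$ and one with $|\tilde Z|\leq\sqrt{m}$. This is false on two counts. First, during a single type~(2) strand the net change in $\tilde Z$ is of order $C(m)=(2+5D/4)\sqrt{m\log m}$, which exceeds the starting value $\eta(m)\approx(D/2)\sqrt{m\log m}$; hence $\tilde Z$ generically crosses zero, and $|\tilde Z|$ passes through values $\leq 1$. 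Second, and more fundamentally, during type~(2) the indices $s(t)$ and $\ell(t)$ are in \emph{different} columns, so $|\tilde Z_t|$ is the vertical coordinate difference between vertices in different columns and does not equal $d(\gt{\V{x}}_{s(t)},\gt{\V{y}})$. The quantity you are trying to control and the quantity you are controlling are simply not the same during type~(2).

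The paper's proof does not treat the final type~(2) strand separately; it applies \Cref{lem:safezone} uniformly to every type~(2) strand. The point you missed is that the corridor bound underlying \Cref{lem:safezone} (each $\sti{\V{x}}$ has fluctuation at most $\tfrac{D}{5}\sqrt{m\log m}$ on $S(D)$) needs only the \emph{starting} vertex of the revealed strand to be at distance $\geq\eta(m)-1$ from the frozen vertex. Since both the revealed strand and the strand of $\gt{\V{y}}$ through the frozen vertex stay within $\tfrac{D}{5}\sqrt{m\log m}$ of their respective drift lines, their column-wise separation remains at least $(\tfrac{D}{2}-\tfrac{2D}{5})\sqrt{m\log m}=\Theta(D\sqrt{m\log m})$ throughout the strand, regardless of the endpoint value. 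No monotonicity argument is needed.
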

\begin{proof}
  During an outer process step, both types of evolution (1) and (2) in
  the definition of the exploration process occur. The claim follows
  by the definition of an outer process step during an evolution of type
  (1). During evolution of type (2), strands are revealed one by one, and
  \Cref{lem:safezone} implies
  $d(\gt{\V{x}}_{s(t)},\gt{\V{y}})= \Theta(\sqrt{m\log m})$ for the
  $t$ considered while revealing a single strand.
\end{proof}

Combined with \Cref{lem:OutNoTouch}, the next lemma indicates that
$(\tilde Z_{k})_{k\leq T}$ is sufficient for detecting all contacts
between $\gt{\V{x}}$ and $\gt{\V{y}}$.

\begin{lemma}
  \label{lem:Close}
  Suppose $\tilde Z_{k} = \hat Z_{s,\ell}$, and that $S(D)$
  occurs. If $(\tilde Z_{k})_{k\leq T}$ evolves by {(1)}, then
  $|\tilde Z_{2k}|$ is the vertical distance
  $d(\gt{\V{x}}_{\ell},\gt{\V{y}})$. 
\end{lemma}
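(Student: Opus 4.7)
My plan is to verify the claim in three steps: first check that the exploration indices keep the two tracked vertices in a common column, then establish that within a single global traversal all vertices in any fixed column are vertically well separated, and finally combine these via the triangle inequality.

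First, I would show by induction on $k$ that $s(2k) \equiv \ell(2k) \pmod{n}$ throughout the exploration. Initially $s(0) = \ell(0) = 0$. A type (1) iteration increments $s$ and $\ell$ each by one over a double step, preserving the congruence at even indices, while type (2a) increments $\ell$ by $n$ and type (2b) increments $s$ by $n$. Thus at every even index $2k$, the vertices $\gt{\V{y}}_{s(2k)}$ and $\gt{\V{x}}_{\ell(2k)}$ lie in a common column of $\T_{n,m}$, and $|\tilde Z_{2k}| = |\hat Z_{s(2k),\ell(2k)}|$ is exactly the vertical distance between them.

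Second, I would show that on $S(D)$, any two distinct vertices of $\gt{\V{y}}$ lying in the same column are at vertical distance at least $D\sqrt{m\log m}$. Fix a column and write $h_j$ for the vertical coordinate of the $j$\textsuperscript{th} strand of $\gt{\V{y}}$ restricted to that column, $j \in \ccb{0, \Gamma-1}$. On $S^0(D)$, consecutive strand pairs satisfy $d^- \geq D\sqrt{m\log m}$ and $d^+ \leq (4 + 3D/2)\sqrt{m\log m}$. The choice $C' = 2 + 5D/4$ from \Cref{cor:All-Separated} makes $C(m)$ exceed the maximal fluctuation permitted by $S_U(4 + 3D/2)$, so the unfolded increments $h_{j+1} - h_j$ are positive and of size $\Theta(C(m))$. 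The truncation by $2m^{1/4}$ in the definition of $\Gamma$ then prevents the cumulative unfolded displacement from wrapping around the torus, so the $(h_j)$ are cyclically sorted with minimum consecutive gap at least $D\sqrt{m\log m}$, bounding every pairwise cyclic distance.

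Third, combining the previous two observations, I conclude. During a type (1) evolution the stopping rule in \eqref{eq:InnerExp} forces $|\tilde Z_{2k}| \leq \eta(m) = (D/2)\sqrt{m\log m}$. For any vertex $\V{v} \neq \gt{\V{y}}_{s(2k)}$ of $\gt{\V{y}}$ in the column of $\gt{\V{x}}_{\ell(2k)}$, the triangle inequality and the second step give $d(\V{v}, \gt{\V{x}}_{\ell(2k)}) \geq d(\V{v}, \gt{\V{y}}_{s(2k)}) - |\tilde Z_{2k}| \geq D\sqrt{m\log m} - \eta(m) = \eta(m) \geq |\tilde Z_{2k}|$. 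Hence $\gt{\V{y}}_{s(2k)}$ realizes the vertical distance from $\gt{\V{x}}_{\ell(2k)}$ to $\gt{\V{y}}$ in this column, so $|\tilde Z_{2k}| = d(\gt{\V{x}}_{\ell(2k)}, \gt{\V{y}})$. The main obstacle is the second step; one must rule out a configuration in which non-consecutive strands of $\gt{\V{y}}$ wrap around the torus and place two vertices close together in a single column. The quantitative relation $C' = 2 + 5D/4$ is tailored precisely so that the deterministic strand drift $C(m)$ dominates the fluctuations allowed by $S_U(4+3D/2)$, ruling out such folding on $S(D)$.
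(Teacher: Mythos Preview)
Your proof takes essentially the same route as the paper's, which is a terse three-line argument: on $S(D)$ the strands of $\gt{\V{y}}$ are at pairwise distance at least $D\sqrt{m\log m}$; since $\eta(m)=(D/2)\sqrt{m\log m}$, the tracked vertex of $\gt{\V{y}}$ must realise the minimum. You fill in the column-alignment check (step~1) and the explicit triangle inequality (step~3) that the paper leaves implicit, and you correctly identify the pairwise strand separation (your step~2) as the substantive point.

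There is one slip in step~2: the inequality $C' = 2+5D/4 > 4+3D/2$ that you appeal to is false for every $D>0$, so $S_U(4+3D/2)$ alone does not force the unfolded increments $h_{j+1}-h_j$ to be positive. The event you actually want is the one underlying the proof of \Cref{cor:All-Separated}, namely that each strand's random-walk fluctuation satisfies $\lvert\sum\phi\rvert\leq A\sqrt{m\log m}$ with $A=2+D/4$; since $A<C'$, each unfolded increment then lies in $[D,4+3D/2]\sqrt{m\log m}$ and is positive. The no-wrap conclusion (total unfolded displacement $<m$) then follows from the analysis in the proof of \Cref{lem:GTnD}, whose conclusion is part of $S(D)$. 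With this correction your argument is complete and matches the paper's.
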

\begin{proof}
  On $S(D)$, there is a distance at least $D\sqrt{m\log m}$ between
  strands of $\gt{\V{y}}$. The lemma follows as
  $\eta(m)=\eta'\sqrt{m\log m}$, $\eta'=D/2$, and at even times during
  evolution by {(1)} $\tilde Z_{2k}$ is the vertical distance
  between two strands of $\gt{\V{x}}$ and $\gt{\V{y}}$.
\end{proof}

The next lemma reduces analyzing inner process steps to analyzing the ideal gap chain $Z$.
\begin{lemma}
  \label{lem:RIGC}
  There exists a constant $c>0$ such that the following holds on
  $S(D)$. Suppose that $\ccb{t_{j},t_{j+1}-1}$ is an inner process
  step of $(\tilde Z_{k})_{k\leq T}$ and $\tilde Z_{t_{j}}\neq
  0$. There is a coupling of $\tilde Z$ and the ideal gap chain
  during this process step such that
  $(|\tilde Z_{2k}|)_{2k\in \ccb{t_{j},t_{j+1}-1}}=(Z_{k})_{2k\in
    \ccb{t_{j},t_{j+1}-1}}$ with probability $1-e^{-c m^{1/2}}$.
\end{lemma}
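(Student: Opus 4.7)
The plan is to realize $\tilde Z$ via the column-wise hard-core configurations underlying $\Phi$ (through \Cref{rem:HC-Perm}) and to couple those, column by column, with independent copies of the infinite-volume hard-core measure $\P_\Z$ on $\Z + \tfrac12$ that underlies the ideal gap chain (\Cref{lem:idealgap}). During an inner process step the evolution is of type (1), so at each pair of $\tilde Z$-steps both strand coordinates advance by one in a single column, and the increment $|\tilde Z_{2(k+1)}| - |\tilde Z_{2k}|$ is determined by the arrows at the two current strand positions --- equivalently, by the hard-core occupations at the (at most four) adjacent sites of the column's dual cycle $C_m^\star$. By \Cref{lem:Close} the cyclic distance between those two positions equals $|\tilde Z_{2k}|$ and lies in $\ccb{1,\eta(m)}$ throughout.

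First I would set up the per-step coupling via correlation decay. On $S(D)$ consecutive strands of $\gt{\V{x}}$ and of $\gt{\V{y}}$ are vertically separated by at least $D\sqrt{m\log m}$ in every column, while the two currently-explored strands are within $\eta(m) = (D/2)\sqrt{m\log m}$ of each other; hence the four new sites to be revealed lie at column-distance at least $(D/2)\sqrt{m\log m}$ from every site revealed by earlier strand pairs within the present process step. Combining the one-dimensional Markov property of the hard-core model on $C_m^\star$ with the exponential correlation decay of \Cref{lem:HC-CD}, the conditional joint law of the four new sites given all previously revealed sites in that column is within total-variation distance $O(e^{-c\sqrt{m\log m}})$ of the restriction of $\P_\Z$ to the corresponding four positions of $\Z + \tfrac12$; the cyclic wrap-around of $C_m^\star$ contributes only an additional $O(e^{-cm})$ error since $|\tilde Z_{2k}| \leq \eta(m) \ll m$. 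A maximal coupling of this joint law with the ideal independent hard-core sample therefore agrees with probability $1 - O(e^{-c\sqrt{m\log m}})$ per step, and when the four sites agree, \Cref{lem:idealgap} identifies the resulting transition of $|\tilde Z|$ with one step of the ideal gap chain, producing pathwise agreement at the even times.

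Finally I would union-bound the failure probability over the duration of the process step. Since the inner process step is a sub-interval of $\ccb{0,T}$ with $T \leq 2\Gamma n = O(m^{3/2}/\sqrt{\log m})$, there are at most $\Gamma n$ steps of $Z$ to couple, so the total failure probability is bounded by $O(\Gamma n) \cdot e^{-c\sqrt{m\log m}} \leq e^{-c' m^{1/2}}$ for a possibly smaller $c'>0$, which is the claimed bound. The main obstacle I expect is implementing the coupling at the level of the joint distribution of the four relevant hard-core sites rather than only their marginals, since both $\tilde Z$'s and $Z$'s increments are functions of this joint configuration. The 1D Markov structure of hard-core combined with \Cref{lem:HC-CD} handles this cleanly, but it must be applied to the full conditional joint law (after conditioning on the nearest previously revealed sites on either side of the active region). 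The hypothesis $\tilde Z_{t_j} \neq 0$ is used so that the ideal chain can be initialized at $Z_0 = |\tilde Z_{t_j}| \geq 1$, inside its state space $\nwithoutzero$.
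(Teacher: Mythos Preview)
Your proposal is correct and follows essentially the same approach as the paper: use the hard-core correlation decay of \Cref{lem:HC-CD} to show that, since on $S(D)$ all previously revealed strands lie at column-distance $\Theta(\sqrt{m\log m})$ from the currently active pair, the per-step transition law of $|\tilde Z|$ is within $e^{-c\sqrt{m\log m}}$ of that of the ideal gap chain on $\Z$, and then union-bound over the at most $T\leq 2\Gamma n$ steps. One small refinement: your phrase ``earlier strand pairs within the present process step'' should read ``all previously explored strands'' (including those from earlier process steps and, via \Cref{lem:GTnD}, the initial strand of the global traversal), though your distance argument already covers this since the $D\sqrt{m\log m}$ separation on $S(D)$ applies to every pair of consecutive strands.
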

\begin{proof}
  On $S(D)$, during an inner process step the previously explored
  strands are at distance at least $\Theta(\sqrt{m\log m})$ away. This
  includes the initial strand of $\gt{\V{x}}$ by \Cref{lem:GTnD}; this
  lemma applies since we consider $\tilde Z_{k}$ only up until $T$.
  Thus \Cref{lem:HC-CD} implies that computing transition
  probabilities of $\tilde Z$ by neglecting correlations from previous
  strands only introduces an exponentially small (in $\sqrt{m\log m}$)
  error. \Cref{lem:HC-CD} also implies that the error in computing
  these transition probabilities on $\mathbb{Z}$ instead of $C_{m}$ is
  also exponentially small (in $m$). The transition probabilities on
  $\mathbb{Z}$ are those that define the ideal gap chain, so the claim
  follows by a union bound, as $T\leq 2\Gamma n\leq m^{2}$.
\end{proof}

The following two remarks concern the disjointness of $\gt{\V{x}}$
and $\gt{\V{y}}$.
\begin{remark}
  \label{rem:success1}
  \Cref{lem:Close} implies that if $y-x<D\sqrt{m\log m}$, then
  $\gt{\V{x}}$ and $\gt{\V{y}}$ are disjoint on $S(D)$:
  $\V{y}\notin \gt{\V{x}}$ since consecutive
  strands of $\gt{\V{x}}$ are at least $D\sqrt{m\log m}$ apart on
  $S(D)$, and equation \eqref{eq:GTnD1} of \Cref{lem:GTnD} implies
  that subsequent strands of $\gt{\V{x}}$ do not get close to
  $\V{y}$. Similarly, \eqref{eq:GTnD1} implies that $\V{x}$ cannot be
  in $\gt{\V{y}}$. Thus $\gt{\V{x}}$ and $\gt{\V{y}}$ are disjoint, as
  every other vertex of $\gt{\V{y}}$ has an incoming arrow from a
  vertex not in $\gt{\V{x}}$. 
\end{remark}

\begin{remark}
  \label{rem:success2} 
  If $y-x>D\sqrt{m\log m}$, then $\V{y}\in \gt{\V{x}}$ is
  possible: this occurs if the exploration process starts its
  evolution according to (2) and hits zero. Note that this is
  consistent with \Cref{lem:OutNoTouch}, as the exploration process
  does not measure the vertical distance between strands when evolving
  according to (2). If $\V{y}\in\gt{\V{x}}$ we say the
  exploration process \emph{fails}, as the process was designed for
  exploring disjoint global traversals (continuing the exploration
  process re-uses already revealed arrows). Otherwise the exploration
  process \emph{succeeds}, and in this case each step of the
  exploration process reveals new arrows of~$\Phi$. Note that it
  cannot be that a subsequently revealed part of $\gt{\V{x}}$ is in
  $\gt{\V{y}}$ if $\V{y}\notin \gt{\V{x}}$.

  In what follows we only consider inner process steps when
  the exploration process succeeds, as in the case of failure the
  inner process degenerates to being identically zero.
\end{remark}

At this stage we can indicate how the proof of \Cref{thm:CofC}
will proceed. We wish to show concentration for the number of contacts
that occur during the inner process steps of a successful exploration
procedure. By \Cref{lem:RIGC} these inner process steps can simulated
by independent ideal gap chains. Concentration will follow provided
there are many inner process steps, and that during each of them the
number of contacts is reasonably concentrated. The first of these is
addressed in \Cref{sec:exc}, and the second in \Cref{sec:cons}.

\subsubsection{Analysis of Exploration Procedure Steps}
\label{sec:exc}

This subsection gives estimates for the duration of initial, inner,
and outer process steps. We also estimate the total number of process
steps during an exploration procedure. First we analyse the duration
of inner process steps.
\begin{lemma}
  \label{lem:innertime}
  Let $\tau$ denote the duration of an inner process step. There is a
  $c>0$ such that for $0<j\leq \sqrt{m}$ and $x>0$
  \begin{equation}
    \label{eq:IGC-Inner-Duration}
    \P\cb{\tau>x (m\log m) \,\Big|\, |\tilde Z_{0}|=j}\leq \exp(-cx).
  \end{equation}
  Moreover, $\E \tau = \Theta(m\log m)$.
\end{lemma}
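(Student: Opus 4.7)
The plan is to reduce the analysis to hitting-time estimates for the ideal gap chain $Z$, and then compare $Z$ to a Brownian motion via the coupling of \Cref{lem:BCouple}.

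First, by \Cref{lem:RIGC}, during the inner process step I can couple $(|\tilde Z_{2k}|)_k$ with the ideal gap chain $(Z_k)_k$ up to a failure event of probability at most $e^{-c m^{1/2}}$. Unpacking the definitions, an inner process step terminates precisely when $|\tilde Z|$ first hits $\{\eta(m)-1, \eta(m)\}$: intermediate visits to $\{\sqrt{m}-1, \sqrt{m}\}$ merely initiate a new $\sigma$-substep within the same inner process step, since the inner regime continues until the exploration transitions to the outer regime. Hence, modulo the coupling error, $\tau$ equals $2 \tau^{+}_{\eta(m)-1}$, the hitting time of $\{\eta(m)-1, \eta(m)\}$ by $Z$ started from $Z_0 = j \leq \sqrt{m}$.

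Next I would apply \Cref{lem:BCouple} to sandwich $Z$ between the bounding chains $Y^\pm$ and couple each with a Brownian motion $B^\pm$ via stopping times $\rho^{\pm}_i$ concentrated around $\beta i$ with sub-exponential fluctuations. Since adjacent spacings in $\mathcal{K}^{\pm}$ lie in a compact subset of $(0,\infty)$ by \eqref{e:kappalocations}, the hitting of $\{\eta(m)-1, \eta(m)\}$ by $\mathcal{Y}^{\pm}$ corresponds to $B^\pm$ exiting a region of width $L = \Theta(\sqrt{m\log m})$. Standard reflection-principle estimates then give that the Brownian exit time has mean $\Theta(L^2) = \Theta(m\log m)$ and tails of the form $\P[\text{exit time} > t] \leq C \exp(-ct/L^2)$. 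Combining this with the sub-exponential control of $\rho^{\pm}_i - \beta i$ from part (3) of \Cref{lem:BCouple} yields $\P[\tau > x(m\log m) \mid |\tilde Z_0|=j] \leq \exp(-cx)$, absorbing the coupling error from \Cref{lem:RIGC}.

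The upper bound $\E\tau = O(m\log m)$ follows by integration of this tail estimate. For the matching lower bound $\E\tau = \Omega(m\log m)$, starting from any $j \leq \sqrt{m}$ the walk must traverse a distance $\eta(m) - \sqrt{m} = \Theta(\sqrt{m\log m})$ to reach the upper barrier, and by diffusive scaling this has expected duration $\Theta(m\log m)$; this can be made rigorous via the lower Brownian coupling $B^-$. The main obstacle I anticipate is the careful bookkeeping across three time scales ($\tilde Z$-time in which $\tau$ is measured, $Z$-time of the ideal gap chain, and Brownian $\rho^{\pm}$-time), together with the behaviour of $Z$ near the reflecting boundary at $1$, where $Z$ is not directly a birth--death chain. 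Since we only need an upward hitting time and the lower boundary only speeds the walk up (as captured by the extension $\mathcal{Y}^{\pm}$), this should be a technical rather than conceptual difficulty.
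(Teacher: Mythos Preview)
Your approach is essentially the paper's: reduce to the ideal gap chain via \Cref{lem:RIGC}, compare with the bounding chains $Y^\pm$, and use the Brownian coupling of \Cref{lem:BCouple} to transfer to exit-time estimates on a strip of width $\Theta(\sqrt{m\log m})$. One direction is swapped in your write-up: the upper tail bound on $\tau$ uses $Y^-$ (since $Z\geq Y^-$ gives $\tau^Z\leq\tau^{Y^-}$), while the lower bound $\E\tau=\Omega(m\log m)$ comes from $Y^+$, not $B^-$ as you write. The paper handles the near-boundary issue you correctly flag by an auxiliary event ensuring $Z$ spends a positive fraction of time above the threshold $a$, so that $\mathcal{Y}^-$ (which only evolves when $Z\geq a$) accumulates enough steps; your heuristic that reflection ``only speeds the walk up'' is morally right, and this is how it is made precise.
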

\begin{proof}
  Note that $\tau$ is the hitting time of $\{\eta(m)-1,\eta(m)\}$ by
  $|\tilde Z|$. By \Cref{lem:RIGC} we can analyse $|\tilde Z|$
  during inner steps using the ideal gap chain $Z$.  To do this we
  will use the processes $Y^{\pm}$ introduced in the proof of
  \Cref{prop:idealgapestimates}; recall also the value $a$ that was
  introduced so that $Y^{\pm}$ have non-degenerate jump distributions
  when $Y^{\pm}\geq a$. We will first show
  \eqref{eq:IGC-Inner-Duration} using $\yminus$.

  Recall that $Z_{k}\geq \yminus_{2k}$. Let $\mathcal{E}_{t}$ be the
  event that $Z_{k}$ takes at least $c_{1}t$ non-zero jumps while
  $Z_{k}\geq a$ and $0\leq k\leq t$. Then there are $c_{1},c_{2}>0$
  such that $\P\cb{\mathcal{E}_{t}^{c}}\leq e^{-c_{2} t}$, as if
  $Z_{k}<a$, there is a uniformly positive probability that
  $Z_{k+a}>a$ by constructing an explicit path for the ideal gap
  chain.

  To establish \eqref{eq:IGC-Inner-Duration}, recall the modification
  $\yyminus$ of $\yminus$ that was introduced prior to
  \Cref{lem:BCouple}.  Then, given the above estimate on
  $\P\cb{\mathcal{E}_{t}}$, we have
  \begin{equation}
    \label{eq:innerupper}
    \P\cb{\tau^{Z}\geq x \,\Big|\, Z_{0}=j} \leq
    \P\cb{\tau^{\yminus}\geq 2c_{1}x \,\Big|\, \yyminus_{0}=a} + e^{-c_{2} x}
  \end{equation}
  by having $\yyminus$ evolve only when $Z\geq a$. The factor of $2$
  accounts for that $\yyminus$ takes two steps for every single step of
  $Z$. We have written, e.g., $\tau^{Z}$ for the hitting time of $Z$.

  To estimate $\P\cb{\tau^{\yyminus}\geq 2c_{1}x \,\Big|\,
    \yyminus_{0}=a}$, we use the Brownian coupling of
  Lemma~\ref{lem:BCouple}. Set $i=ym\log m$, $q=\beta$. Then
  \begin{align*}
   \P\cb{\tau^{\yyminus}>ym\log m \,\Big|\, \yyminus_{0}=j}
    &\leq
    \P\cb{ \Big\{\tau^{\yyminus}>ym\log m\Big\} \cap (E^{-}(i,q))^{c}
      \,\Big|\, \yyminus_{0}=j} +  \P\cb{E^{-}(i,q)}
    \\
    &\leq \P\cb{ \sup_{t\in [0,2\beta y m\log m]}|B(t)|\leq
      \kappa_{\eta(m)-1} \,\Big|\, B(0)=\kappa_{\sqrt{m}}} +
      Ce^{-c ym\log m} \\
    &\leq e^{-cy},
  \end{align*}
  where the second inequality is by~\eqref{e:Eprobs}, and the third
  has used that $\kappa_{\eta(m)-1} = \Theta( \sqrt{m\log m})$. Using
  this with $y=2c_{1}x$ and \eqref{eq:innerupper} with $x$ replaced by
  $xm\log m$ establishes \eqref{eq:IGC-Inner-Duration}.
  
  The estimate for $\E\tau$ follows by an analogous computation using
  $\yyplus$ to show that the probability $\P\cb{\tau > xm\log m \mid Z_{0}=j}$ is of
  order $\Theta(1)$
  if $x$ is small enough.
\end{proof}

We next consider the duration of outer process steps.
\begin{lemma}
  \label{lem:outertime}
  Let $\tau$ denote the duration of an outer process step. There is a
  $c>0$ such that
  \begin{equation}
    \label{eq:outertime}
    \P\cb{\tau>xm\log m\,\Big|\, S(D)} \leq e^{-cx}, \qquad 0\leq
    x\leq \sqrt{m}.
  \end{equation}
  Moreover, $\E \tau = \Theta(m\log m)$.
\end{lemma}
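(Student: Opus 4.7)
The plan is to decompose an outer process step into alternating type~(2) and type~(1) sub-phases (as introduced in Section~\ref{sec:expl}), writing $\tau = \tau_1 + \tau_2$ for the total time in the two kinds of sub-phases. The lemma then reduces to sub-exponential tail bounds on each piece together with a matching lower bound $\E\tau_1 = \Omega(m\log m)$.

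First I would analyse each type~(2) sub-phase. It starts with $|\tilde Z| \in \{\eta(m)-1,\eta(m)\}$ and ends as soon as $|\tilde Z|<\eta(m)-1$, revealing one strand at a time. Because $C'-\eta' = 2+3D/4 > \eta'$, a single strand flips the sign of $\tilde Z$ without terminating the sub-phase, and after a pair of strands $\tilde Z = \eta(m)+(W_2-W_1)$ where, by \Cref{lem:increment} and \Cref{lem:Corridor}, the $W_i$ are independent sub-Gaussian strand fluctuations of standard deviation $\Theta(\sqrt{m})$ on $S(D)$. Symmetry of the $W_i$ gives a uniformly positive probability that $W_2-W_1<-1$, so the number of strands in a type~(2) sub-phase is stochastically dominated by a geometric random variable, making each sub-phase contribute $O(n) = O(m)$ time with a sub-exponential tail on scale $m$. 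Conditional on termination, $|\tilde Z|$ ends at distance $\Theta(\sqrt{m})$ from the outer boundary, a fact I will use in the next step.

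Next I would control the type~(1) sub-phases via the coupling with the ideal gap chain $Z$ from \Cref{lem:RIGC}. Concatenated, the type~(1) sub-phases evolve as $Z$, started just below the outer boundary, killed at the inner boundary $\{\sqrt{m}-1,\sqrt{m}\}$, and ``reflected'' near the outer boundary by the type~(2) mechanism. Proceeding as in the proof of \Cref{lem:innertime}, I would sandwich this evolution between $\yyminus$ and $\yyplus$ and then apply the Brownian coupling of \Cref{lem:BCouple}. The resulting hitting times are governed by Brownian excursions on the scale $\kappa^{\pm}_{\eta(m)}-\kappa^{\pm}_{\sqrt{m}} = \Theta(\sqrt{m\log m})$, so standard concentration for Brownian hitting times yields $\P\cb{\tau_1 > xm\log m \mid S(D)} \leq e^{-cx}$ with $\E\tau_1 = \Theta(m\log m)$.

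Finally, gambler's ruin for the coupled walk, started at distance $\Theta(\sqrt{m})$ from the outer barrier with an absorbing barrier at vertical distance $\Theta(\eta(m))$, shows that the number of returns to $\{\eta(m)-1,\eta(m)\}$ before absorption at the inner boundary is stochastically dominated by a geometric random variable of mean $\Theta(\sqrt{\log m})$. Combined with the per-sub-phase estimate from step one, this gives $\tau_2 = O(m\sqrt{\log m})$ with sub-exponential tails, subdominant to $\tau_1$ on the scale $m\log m$. The hardest part will be keeping the interleaving of the two kinds of sub-phases compatible with the random-walk coupling, in particular verifying that the post-type~(2) starting positions of successive type~(1) excursions are suitably concentrated on the $\sqrt{m}$-scale so that the gambler's ruin heuristic can be applied uniformly; this is also why the tail bound is only claimed for $x \leq \sqrt{m}$, since the underlying coupling of \Cref{lem:RIGC} itself fails with probability $e^{-c\sqrt{m}}$.
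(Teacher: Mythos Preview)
Your approach is workable but takes a materially different and harder route than the paper. The paper does not follow the exploration procedure literally. Instead, it observes that during an outer step the two global traversals are always far apart (\Cref{lem:OutNoTouch}), so one may first reveal all of $\gt{\V{y}}$ and then track the distance $Z'_j$ from $\gt{\V{x}}_j$ to the fixed strand structure of $\gt{\V{y}}$. This $Z'$ is (up to an $e^{-\Theta(\sqrt{m})}$ decoupling error from \Cref{lem:RW-CD}) a simple random walk on an interval $\ccb{0,W}$ with $W = \Theta(\sqrt{m\log m})$, and the outer step ends when $Z'$ comes within $\sqrt{m}-1$ of either boundary. A single Brownian-coupling estimate then gives both the tail bound and $\E\tau = \Theta(m\log m)$. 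The type~(1)/type~(2) alternation never appears; it is absorbed into the single interval picture via the ``corridor'' interpretation of \Cref{rem:cor}.

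Your decomposition $\tau = \tau_1 + \tau_2$ with the reflected-walk heuristic does get the right answer, but carries overhead the paper avoids: you must (i) justify invoking the ideal-gap coupling during outer steps, since \Cref{lem:RIGC} is stated only for inner steps (the same proof goes through, but it needs saying); (ii) control the post-type-(2) re-entry distribution well enough to make the gambler's-ruin bound on the number of returns uniform, which you correctly flag as the delicate point; and (iii) sum a geometric number of sub-exponential excursion durations rather than invoking one hitting-time estimate. None of this is wrong, and your identification of the $x\leq\sqrt{m}$ restriction as coming from the $e^{-c\sqrt{m}}$ coupling error is exactly right. The paper's re-revealing trick simply collapses your three-step argument into one.
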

\begin{proof}
  We begin by showing \eqref{eq:outertime}. Note that on $S(D)$ an
  outer process step starts at distance at most $K\sqrt{m\log m}$,
  $K=4+\frac{3D}{2}$, as this is the largest distance between two
  consecutive strands. Hence it is enough to prove that there is a
  $c_{1}>0$ such that
  \begin{equation}
    \label{eq:outerphase}
    \P\cb{\tau> x m\log m \,\Big|\, |\tilde Z_{0}|=j} \leq \exp \{-c_{1}x\}, \qquad
    j\leq K\sqrt{m\log m}.
  \end{equation}
  During an outer process step we can replace the random variable
  $\phi_{\V{x}}$ of $\Phi$ by independent simple random walk
  increments with distribution $\phi$ at a cost of an error in
  probabilities of size at most $\exp\{-\Theta(m^{1/2})\}$ by
  \Cref{lem:OutNoTouch} and \Cref{lem:RW-CD}. We do so in what
  follows; this is the source of the upper bound on $x$ in
  \eqref{eq:outertime}. 

  Instead of determining $|\tilde Z_{j}|$ via the exploration
  procedure, we instead determine it by revealing $\gt{\V{y}}$, and
  then revealing $\gt{\V{x}}$.  Let $Z'_{j}$ be the distance from
  $\gt{\V{x}}_{j}$ to $\gt{\V{y}}$. By the preceding paragraph, $Z'$
  is a simple random walk with step distribution $\phi$ on an interval
  $\ccb{0,W}$, where $W\in \ccb{D\sqrt{m\log m},K\sqrt{m\log m}}$ is
  random. Let $\tau'$ be the first time at which $Z'$ is within
  distance $\sqrt{m}-1$ of the boundary of $\ccb{0,W}$. Then $\tau'$
  is stochastically dominated by the first time $Z'$ is within
  distance $\sqrt{m}-1$ of the boundary of $\ccb{0,K\sqrt{m\log m}}$.
  By a modification of the Brownian coupling from \Cref{lem:BCouple}
  (i.e., carrying out the same construction for simple random walk)
  and arguing as in the proof of \Cref{lem:innertime}, this implies
  $\P\cb{\tau'>x m\log m | Z'_{0}=j}\leq \exp\{-c_{1}x\}$ for all
  $j\in \ccb{0,K\sqrt{m\log m}}$. To conclude the proof of
  \eqref{eq:outertime}, observe that $\tau'$ can differ from $\tau$ by
  at most a multiplicative constant, as outer process steps must
  evolve both $\gt{V{x}}$ and $\gt{\V{y}}$ a positive proportion of
  the time.

  To obtain the order of $\E \tau$, it suffices to prove
  $\P\cb{\tau > xm\log m \mid S(D)} = \Theta(1)$ for some $x>0$. This
  follows from the discussion above as $Z'$ begins at distance
  $\Theta(\sqrt{m\log m})$ from the boundary of $\ccb{0,W}$ and
  $W\geq D\sqrt{m\log m}$.
\end{proof}

The next lemma gives a similar estimate for the duration of the
initial process step.
\begin{lemma}
  \label{lem:pretime}
  Let $\tau$ denote the duration of the initial process step started
  from $|\tilde Z_{0}|=j$, $K=4+\frac{3D}{2}$, and
  $k = (j/D\sqrt{m\log m} - K)_{+}$.
  There is a $c>0$ such that
  \begin{equation}
    \label{eq:pretime}
    \P\cb{\tau>nk+x m\log m \mid S(D)} \leq \exp(-cx), \qquad 0\leq x\leq \sqrt{m}.
  \end{equation}
\end{lemma}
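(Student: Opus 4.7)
The plan is to split the initial process step into a deterministic shrinking phase followed by a random phase that essentially reduces to an outer process step, allowing a direct reuse of \Cref{lem:outertime}.

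\textbf{Deterministic shrinking.} On $S(D)$, \Cref{cor:All-Separated} with $C' = 2 + 5D/4$ and $A = 2 + D/4$ gives $C' - A = D$ and $C' + A = K$, so every strand's vertical displacement $\Delta = C(m) + \sti{\V{x}}_{n-1}$ lies in $[D\sqrt{m\log m}, K\sqrt{m\log m}]$ deterministically. A case analysis on $\mathrm{sgn}(\tilde Z)$, which selects rule (2)(a) or (2)(b), will establish two facts: (i) while $|\tilde Z| > K\sqrt{m\log m}$, one strand decreases $|\tilde Z|$ by at least $D\sqrt{m\log m}$ without a sign change; (ii) once $|\tilde Z| \leq K\sqrt{m\log m}$, the band $[-K\sqrt{m\log m}, K\sqrt{m\log m}]$ is forward-invariant under further strands. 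Together these show that after at most $k = (j/(D\sqrt{m\log m}) - K)_+$ strands, either the process step has terminated (contributing at most $nk$ to $\tau$), or $|\tilde Z| \leq KD\sqrt{m\log m}$.

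\textbf{Random phase.} If the step has not terminated after $k$ strands, the starting position $|\tilde Z| \leq KD\sqrt{m\log m} = \Theta(\sqrt{m\log m})$ matches the setting of \Cref{lem:outertime}. The plan is to apply its argument essentially verbatim: use \Cref{lem:RW-CD} to substitute IID simple random walk increments for the $\phi_{\V{x}}$'s at an additive probability cost of $\exp(-\Theta(\sqrt{m}))$, reveal $\gt{\V{y}}$ first, and treat the per-strand column-zero vertical distance $Z'_j$ between strands of $\gt{\V{x}}$ and $\gt{\V{y}}$ as a simple random walk on an interval of length $\Theta(\sqrt{m\log m})$. The simple random walk analogue of the Brownian coupling from \Cref{lem:BCouple}, as used in the proof of \Cref{lem:outertime}, then yields that $Z'$ enters $[0, \eta(m)-1)$ within at most $xm\log m$ time steps with probability at least $1 - e^{-cx}$, valid for $0 \leq x \leq \sqrt{m}$ (the upper cutoff inherited from the substitution error).

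Adding the two contributions gives $\tau \leq nk + xm\log m$ with the stated probability. \emph{The main obstacle} will be the deterministic claim (ii) and its interaction with (i): since the exploration alternates between rules (2)(a) and (2)(b) as $\tilde Z$ crosses zero, a careful sign-based case analysis is needed to verify that the deterministic bounds on $\Delta$ on $S(D)$ force $|\tilde Z|$ to decrease monotonically while above $K\sqrt{m\log m}$ and to remain in the band once it has entered. Once this is in place, the random phase is a short reprise of \Cref{lem:outertime}.
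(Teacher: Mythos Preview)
Your approach is essentially the same as the paper's: a deterministic shrinking phase (using that on $S(D)$ consecutive strands are separated by at least $D\sqrt{m\log m}$, so each type-(2) strand decreases $|\tilde Z|$ by at least that much) followed by reuse of the existing duration bounds once $|\tilde Z|$ is $O(\sqrt{m\log m})$. Your claims (i)--(ii) are a correct elaboration of what the paper states in one sentence.

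One minor omission: in the random phase you only invoke the argument of \Cref{lem:outertime}, which is appropriate when the initial step begins with type-(2) evolution, i.e.\ when $|\tilde Z_0|\geq \eta(m)-1$. If instead $|\tilde Z_0|<\eta(m)-1$ (so $k=0$ automatically), the initial step runs under type-(1) evolution, the two strands are too close to be decoupled by revealing $\gt{\V{y}}$ first, and you need the ideal-gap-chain route of \Cref{lem:innertime} rather than the simple-random-walk-on-an-interval route of \Cref{lem:outertime}. The paper handles this by citing both lemmas for the $k=0$ case; adding the one-line appeal to \Cref{lem:innertime} closes this gap. Also note that your claim (ii) on forward invariance, while true, is not actually needed: once $|\tilde Z|$ is $O(\sqrt{m\log m})$ the random-phase bound applies directly and does not require the process to remain in the band.
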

\begin{proof}
  If $|\tilde Z_{0}|\leq K\sqrt{m\log m}$, then $k=0$ and this follows
  exactly from the arguments given for
  \Cref{lem:outertime,lem:innertime}. Now observe that since
  consecutive strands are separated by at least
  $D \sqrt{m\log m}$ on $S(D)$, if $j>K\sqrt{m\log m}$ then
  after (at most) $k$ strands, $|\tilde Z_{kn}|$ will be at most
  $K\sqrt{m\log m}$. We can then apply the argument for $k=0$.
\end{proof}

The exploration procedure alternates, after the initial process step,
between inner and outer process steps. We block together pairs of
inner and outer process steps into \emph{full process steps}. In the
next lemma the choices of $m^{3/8}$ and $m^{1/4}$ have been chosen for
convenience. 

\begin{lemma}
  \label{lem:ProcessSteps}
  Let $N$ denote the number of full process steps for a successful
  exploration process $(\tilde Z_{k})_{k\leq T}$.
  Suppose $|\tilde Z_{0}| =O(\sqrt{m^{3/2}\log m})$. There are
  $\alpha,c_{1}>0$ and $\mu=\mu(m)$ such that
  \begin{equation}
    \label{eq:fps}
    \P\cb{ |N - \mu| > \alpha^{-1}m^{3/8} \mid S(D)}\leq e^{-c_{1} m^{1/4}},
    \quad
    \mu = \Theta\ob{\frac{m^{3/2}}{(\log m)^{3/2}}}.
  \end{equation}
\end{lemma}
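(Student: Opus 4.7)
The plan is to write the $\tilde Z$-time consumed by the exploration as a sum of full-process-step durations and apply sub-exponential concentration. Let $t_0$ denote the duration of the initial process step and, for $i \geq 1$, let $\tau_i$ denote the duration of the $i$\textsuperscript{th} full process step (one inner plus one outer). The exploration terminates at $\tilde Z$-time $T = \Theta(\Gamma n) = \Theta(m^{3/2}/\sqrt{\log m})$ on $S(D)$: the upper bound $T \leq 2 \Gamma n$ is immediate from the definitions, and the matching lower bound follows because each update advances at most one of $s,\ell$ by one. Therefore $\sigma_N := t_0 + \sum_{i=1}^N \tau_i$ lies in $[T - \max_i \tau_i, \, T]$, so it suffices to prove that $\sigma_{N_0}$ concentrates tightly around its mean for the deterministic choice $N_0 := \mu := \floor{T/\mu_0}$, where $\mu_0 := \E \tau_1$.

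Single-step control is already available from the previous section: \Cref{lem:innertime} and \Cref{lem:outertime} give that $\tau_i$ is sub-exponential at scale $m \log m$ with $\mu_0 = \Theta(m \log m)$. \Cref{lem:pretime} applied with the hypothesis $|\tilde Z_0| = O(\sqrt{m^{3/2} \log m})$ gives $t_0 = O(m^{5/4})$ up to sub-exponentially small fluctuations, negligible compared with $T$.

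The crucial step is to replace $(\tau_i)_{i \leq N}$ by an independent sequence $(\tau_i^*)$. The arrows consulted during the $i$\textsuperscript{th} full step lie in the contiguous ``fresh'' region traced out by the exploration, and by \Cref{lem:safezone} together with the corridor structure of outer process steps this region is vertically separated from previously revealed arrows by $\Theta(\sqrt{m \log m})$ in each column. The correlation decay in \Cref{lem:RW-CD} then allows one to couple $(\tau_i)$ with an i.i.d.\ sequence drawn from the unconditional law of $\tau_1$, with per-step coupling error $\exp(-\Theta(\sqrt{m \log m}))$; a union bound over the at most $O(m^2)$ potential step indices leaves a cumulative error that is super-polynomially small. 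Bernstein's inequality for sub-exponential variables (\cite[Thm.~2.8.1]{Vershynin}) then yields
\begin{equation*}
  \P\cb{\, \Big|\textstyle\sum_{i=1}^{N_0}(\tau_i^* - \mu_0)\Big| > s \,}
  \, \leq \, 2 \exp\ob{-c \, \min\ob{\tfrac{s^2}{N_0 \mu_0^2}, \, \tfrac{s}{\mu_0}}} \, .
\end{equation*}
Choosing $s = \alpha^{-1} m^{3/8} \mu_0$ for small enough $\alpha$ and substituting the scalings $\mu_0 = \Theta(m \log m)$ and $N_0 = \mu$ from the statement, a routine calculation shows the right-hand side is at most $\exp(-c_1 m^{1/4})$. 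The sandwich $N \mu_0 \leq \sigma_N \leq (N+1) \mu_0$ together with $\sigma_N \in [T - \max_i \tau_i,\, T]$ converts $|\sigma_{N_0} - N_0 \mu_0| \leq \alpha^{-1} m^{3/8} \mu_0$ into $|N - \mu| \leq \alpha^{-1} m^{3/8}$ after a harmless readjustment of $\alpha$.

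The principal obstacle I anticipate is the i.i.d.\ coupling of the $\tau_i$: one must specify precisely which arrows are consulted during each full process step and verify uniformly over histories that these sets are separated enough for \Cref{lem:RW-CD} to apply with the stated $\exp(-\Theta(\sqrt{m \log m}))$ gain per step. The exploration is time-inhomogeneous (transitions depend on the current $|\tilde Z|$), so the conditional law of $\tau_i$ given the history is not exactly the unconditional one; correlation decay ensures these laws are exponentially close in total variation, which is precisely what the coupling exploits.
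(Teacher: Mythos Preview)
Your approach is essentially the paper's: write the terminal time as the initial-step duration plus a sum of full-step durations $W_i$, invoke the sub-exponential tail bounds from \Cref{lem:innertime,lem:outertime,lem:pretime} to get $\E W_i=\Theta(m\log m)$ and sub-exponential tails at that scale, then apply Bernstein and convert back to a statement about $N$.

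The one place where you do more work than the paper is the i.i.d.\ coupling via correlation decay. The paper bypasses this entirely. The point is that \Cref{lem:innertime,lem:outertime} already give tail and mean bounds that are \emph{uniform in the history}: inner steps always begin from $\{\sqrt m-1,\sqrt m\}$, outer steps from $\{\eta(m)-1,\eta(m)\}$, and those lemmas (whose proofs already absorb the correlation decay via \Cref{lem:RIGC}) bound the conditional duration given any such start. Hence, conditional on $\tilde Z_{\tau_i}$, the next $\tau_{i+1}$ is sub-exponential with parameters that do not depend on $i$ or on the past, and one can sandwich $\sum_{i\le k}W_i$ between two genuinely i.i.d.\ sub-exponential sums without any further coupling. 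Your proposed separate coupling of the $\tau_i$ to an i.i.d.\ sequence via \Cref{lem:RW-CD} is not wrong, but it re-derives something already contained in the cited lemmas; the obstacle you flag at the end is therefore not actually an obstacle.
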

\begin{proof}
  Let $\tau_{i}$ denote the duration of the $i$\textsuperscript{th}
  process step, where the initial process step is indexed by $0$. Let
  $W_{i}=\tau_{2i-1}+\tau_{2i}$ be the duration of a full process
  step.

  We first consider what happens for an exploration process run up to
  time $Mn$. We assume $m^{7/16}\leq M\leq 2\Gamma$, and aim to show
  \eqref{eq:fps} with $\mu = Mn/(\alpha m \log m)$. Note that
  $N=k$ is equivalent to $Mn$ being in the interval
  $\ccb{\tau_{0}+\sum_{i=1}^{k}W_{i},
    \tau_{0}+\sum_{i=1}^{k+1}W_{i}-1}$. We can bound
  $\sum_{i=1}^{k}W_{i}$ above and below by an IID sum of
  sub-exponential random variables, as conditional on
  $\tilde Z_{\tau_{i}}$, $\tau_{i+1}$ is a sub-exponential random
  variable by \Cref{lem:outertime,lem:innertime,lem:pretime}. In the
  following we abuse notation slightly and continue to write $W_{i}$
  for these bounding variables.

  The rest of the proof consists of estimating the probability that
  $N=k$, i.e., that $W_{k+1}$ is the term that makes the sequence of
  partial sums $\sum_{i=1}^{r}W_{i}$ larger than
  $Mn-\tau_{0}\approx Mn$. More precisely, by
  \Cref{lem:outertime,lem:innertime,lem:pretime}, we have that
  \begin{equation}
    \label{eq:N1p}
    \P\cb{\tau_{0}\geq m^{11/8}\log m}\leq e^{-c m^{3/8}}, \quad
    \P\cb{W_{1}\geq m^{11/8}\log m}\leq e^{-c m^{3/8}}.
  \end{equation}
  In applying \Cref{lem:pretime} we have used the hypothesis $|\tilde
  Z_{0}|= O(\sqrt{m^{3/2}\log m})$. By \eqref{eq:N1p} it suffices to
  show that there is an $\alpha>0$ such that
  \begin{align}
    \label{eq:N2p}
    \P\cb{\sum_{i=1}^{k}W_{i}>Mn-2m^{11/8}\log m}
    &\leq e^{-cm^{1/4}}, \qquad k<\mu-\alpha^{-1}m^{3/8},
    \\
    \label{eq:N3P}
    \P\cb{\sum_{i=1}^{k}W_{i}<Mn}
    &\leq e^{-cm^{1/4}}, \qquad  k>\mu+\alpha^{-1}m^{3/8}, 
  \end{align}
  as the lemma then follows by a union bound over $k$ since
  $N\leq m^{2}$. Define $\alpha$ by $\alpha m\log m = \E W_{1}$;
  $\alpha = \Theta(1)$ by \Cref{lem:outertime,lem:innertime}. By
  Bernstein's inequality for subexponential random
  variables~\cite[Theorem~2.8.2]{Vershynin},
  \begin{equation*}
    \label{eq:N4p}
    \P\cb{ \Big| \sum_{i=1}^{k} W_{i} - k\alpha m\log m\Big| \geq t m\log m} \leq 2
    \max \{ e^{-c_{2}\frac{t^{2}}{k}}, e^{-c_{3}t}\}.
  \end{equation*}
  The claim follows by re-arranging \eqref{eq:N2p} and \eqref{eq:N3P}
  and applying this estimate. This uses
  $M\leq 2\Gamma \leq 2\sqrt{m}$, and for \eqref{eq:N2p} that
  $M\geq m^{7/16}$ so $m^{11/8}\log m = o(Mn)$. This completes the
  proof of the claim for an exploration procedure of length $Mn$.

  To finish the proof of the lemma, write $T=M_{T}n$. $T$ differs from
  $2\Gamma n$ only due to the phase of the initial process step that
  evolves a single global traversal. This phase takes time
  $O(m^{5/4})$ by the assumption
  $|\tilde Z_{0}|=O(\sqrt{m^{3/2}\log m})$; see the proof of
  \Cref{lem:pretime}. This alters $M_{T}$ by at most $m^{1/4}$, and
  hence $\mu$ by at most $o(m^{3/8})$. This completes the proof.
\end{proof}

\subsubsection{Consequences}
\label{sec:cons}

Write $L(t) = |\{ 0\leq s\leq t \mid Z_{s}=1\}|$ for the local time of the
ideal gap chain at state $1$ up to time $t$. Note that the law of
$L(t)$ depends on $Z_{0}$.
\begin{lemma}
  \label{lem:IGC-Inner-Contacts}
  Let $\tau$ be the hitting time of $\{\eta(m)-1,\eta(m)\}$ by the
  ideal gap chain with $Z_{0}=1$. Then 
  \begin{equation}
    \label{eq:IGC-IC}
    L(\tau) = \sum_{i=1}^{R} V_{i}
  \end{equation}
  where $R$ is a geometric random variable with success probability
  $\Theta( (\log m)^{-1/2})$ and the $V_{i}$ are IID geometric random
  variables with success probability $\Theta(m^{-1/2})$. The variables
  $R$ and $V_{i}$ are independent.
\end{lemma}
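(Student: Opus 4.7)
The plan is to decompose the trajectory of $Z$ between times $0$ and $\tau$ using the two scales $1 \ll \sqrt{m} \ll \eta(m)$, exhibiting $L(\tau)$ as a geometric number $R$ of i.i.d.\ geometric contributions $V_i$. The two geometric parameters will come from two applications of \Cref{prop:idealgapestimates}, at the scale pairs $(1,\sqrt{m})$ and $(\sqrt{m},\eta(m))$.

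\textbf{Defining the blocks.} Set $T_0 = 0$, and for $i \geq 1$ define $S_i = \inf\{k > T_{i-1} : Z_k \geq \sqrt{m}\}$ and $T_i = \inf\{k > S_i : Z_k = 1 \text{ or } Z_k \geq \eta(m) - 1\}$. Let $R$ be the first index with $Z_{T_i} \geq \eta(m) - 1$, so that $T_R = \tau$ (using that jumps of $Z$ are bounded by $2$, so the set $\{\eta(m)-1, \eta(m)\}$ cannot be skipped over). Set $V_i$ equal to the number of visits to state $1$ in $[T_{i-1}, S_i - 1]$. Since $Z$ does not visit $1$ during $[S_R, T_R]$, we have $L(\tau) = \sum_{i=1}^R V_i$.

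\textbf{Geometric parameters.} By the strong Markov property at each visit to $1$, the successive excursions from $1$ within a block are i.i.d.\ Bernoulli trials, each either returning to $1$ (failure) or reaching $\sqrt{m}$ (success); $V_i$ is the index of the first success. To compute the per-excursion success probability, condition on $Z_1$, which by \Cref{def:IGC} lies in $\{2,3\}$ with probability $\Theta(1)$; on that event, \Cref{prop:idealgapestimates} with $j_1 \in \{2,3\}$ and $j_2 = \sqrt{m}$ gives success probability $\Theta(m^{-1/2})$. Hence $V_i$ is geometric with parameter $\Theta(m^{-1/2})$. Similarly, by the strong Markov property at each $S_i$, and noting $Z_{S_i} \in \{\sqrt{m}, \sqrt{m}+1\}$, \Cref{prop:idealgapestimates} with $j_1 = Z_{S_i}$ and $j_2 = \eta(m) = \Theta(\sqrt{m\log m})$ gives that each block is terminal with probability $\Theta((\log m)^{-1/2})$, so that $R$ is geometric of that order.

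\textbf{Independence and main obstacle.} Independence of $R$ from $(V_i)$ follows from the strong Markov property: each $V_i$ is measurable with respect to $\sigma(Z_k : k \leq S_i)$, whereas the event that block $i$ is terminal depends only on $(Z_k)_{k \geq S_i}$ (with the conditional distribution given $Z_{S_i}$ the same in each block by strong Markov at $T_{i-1}$). The main technical obstacle is that \Cref{prop:idealgapestimates} requires $j_1 > 1$, which rules out applying it directly from state~$1$; this is handled by the one-step conditioning $Z_1 \in \{2,3\}$ described above, which costs only a $\Theta(1)$ prefactor in the parameter of $V_i$.
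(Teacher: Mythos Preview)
Your proof is correct and follows essentially the same approach as the paper: decompose the trajectory at the intermediate scale $\sqrt{m}$, use the strong Markov property to get i.i.d.\ blocks, and apply \Cref{prop:idealgapestimates} at the two scale pairs to identify the geometric parameters. The paper uses the hitting set $\{\sqrt{m}-1,\sqrt{m}\}$ rather than $\{Z_k\geq\sqrt{m}\}$, but this is immaterial; your treatment is in fact slightly more explicit than the paper's on the one-step conditioning needed to invoke \Cref{prop:idealgapestimates} (which requires $j_1>1$) and on the independence of $R$ from the $V_i$. One small remark: in your independence paragraph, the key fact is not just that the post-$S_i$ evolution is Markov given $Z_{S_i}$, but that the overshoot $Z_{S_i}$ itself is independent of $V_i$ (since it is determined by the final, successful excursion from $1$, whose law is the same regardless of how many failed excursions preceded it); you implicitly use this, and it is worth stating.
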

\begin{proof}
  Let $\tau_{1}<\infty$ be the first hitting time of
  $\{\sqrt{m}-1,\sqrt{m}\}$. For $r\geq 1$ let
  $h_{r} = \inf \{ j>\tau_{r} \mid Z_{j}=1\}$ and
  $\tau_{r+1}=\inf \{j>h_{r}\mid Z_{j}\in \{\sqrt{m}-1,\sqrt{m}\}$.
  The number $R-1$ of excursions to $1$ from $\{\sqrt{m}-1,\sqrt{m}\}$
  is a geometric random variable with success probability
  $\Theta( \frac{1}{\sqrt{\log m}})$ by
  \Cref{prop:idealgapestimates}. This is by applying the strong
  Markov property at time $\tau_{r}$, which yields an IID Bernoulli
  sequence of trials to hit $1$ before $\{\eta(m)-1,\eta(m)\}$; the
  variables are identically distributed as the hitting distribution on
  $\{\sqrt{m}-1,\sqrt{m}\}$ is IID after a visit to $1$. 

  If $h_{r}$ occurs, the number of visits $V_{r+1}$ to $1$ before
  returning to $\{\sqrt{m}-1,\sqrt{m}\}$ is a geometric random
  variable with success probability $\Theta(m^{-1/2})$ by
  \Cref{prop:idealgapestimates}. This proves the claim, as there are
  also $V_{1}$ visits to $1$ before hitting $\{\sqrt{m}-1,\sqrt{m}\}$.
\end{proof}

The next technical lemma will give control of the visits of the
exploration process to $1$.
\begin{lemma}
    \label{lem:SC}
    Let $N\in\nwithoutzero$, $\tilde N = \sum_{i=1}^{N}R_{i}$, and
    $\tilde W = \sum_{i=1}^{\tilde N}V_{i}$. Suppose the $V_{i}$ an
    IID sequence of sub-exponential random variables and the $R_{i}$
    an IID sequence of $\nwithzero$-valued sub-exponential random
    variables. There is a $c>0$ such that
    \begin{equation*}
      \label{eq:SC}
      \P\cb{ |\tilde W-\E\tilde W|>3N^{2/3}}\leq 4e^{-c N^{1/3}}.
    \end{equation*}
  \end{lemma}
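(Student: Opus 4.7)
The plan is to apply Bernstein's inequality for sub-exponential random variables (\cite[Theorem~2.8.2]{Vershynin}) twice: once to the outer sum $\tilde N = \sum_{i=1}^{N} R_i$, and once, conditionally on $\tilde N$, to the inner sum $\tilde W = \sum_{i=1}^{\tilde N} V_i$. Let $\mu_R = \E R_1$ and $\mu_V = \E V_1$, and note that by Wald's identity $\E \tilde W = N \mu_R \mu_V$. The case $\mu_R = 0$ (so $R_i \equiv 0$ a.s.) is trivial, so we assume $\mu_R > 0$.

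First I would bound $|\tilde N - N \mu_R|$. Since the $R_i$ are IID sub-exponential, Bernstein yields
\begin{equation*}
\P\cb{ \bigl| \tilde N - N \mu_R \bigr| > N^{2/3} } \leq 2 \exp \bigl( - c_1 \min(N^{1/3}, N^{2/3}) \bigr) \leq 2 \exp(-c_1 N^{1/3})
\end{equation*}
for a constant $c_1 > 0$ depending only on the sub-exponential norm of $R_1$. Call this event $A_1$ (the complement of the event on the left).

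Second, I would condition on the $\sigma$-algebra $\mathcal{F}_R$ generated by $(R_i)_{i=1}^N$. On the good event $A_1$, the value $\tilde N$ lies in $[0, 2 N \mu_R]$ for $N$ large. Because the $V_i$ are independent of $\mathcal{F}_R$, another application of Bernstein gives, uniformly over deterministic $k \leq 2 N \mu_R$,
\begin{equation*}
\P\cb{ \Bigl| \sum_{i=1}^{k} V_i - k \mu_V \Bigr| > N^{2/3} } \leq 2 \exp \bigl( - c_2 \min(N^{4/3}/k, N^{2/3}) \bigr) \leq 2 \exp(-c_2' N^{1/3}),
\end{equation*}
with $c_2' = c_2 / (2 \mu_R) \wedge c_2$. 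Integrating over $\tilde N$ on the event $A_1$ yields
\begin{equation*}
\P\cb{ \bigl| \tilde W - \tilde N \mu_V \bigr| > N^{2/3}, \, A_1 } \leq 2 \exp(-c_2' N^{1/3}).
\end{equation*}

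Finally I would combine via the triangle inequality: on the intersection of the two good events,
\begin{equation*}
\bigl| \tilde W - N \mu_R \mu_V \bigr| \leq \bigl| \tilde W - \tilde N \mu_V \bigr| + \mu_V \bigl| \tilde N - N \mu_R \bigr| \leq (1 + \mu_V) N^{2/3}.
\end{equation*}
To obtain the constant $3$, replace the thresholds $N^{2/3}$ in the two Bernstein steps by $N^{2/3}$ and $(1 \vee \mu_V)^{-1} N^{2/3}$ respectively (this only changes the constants $c_1, c_2'$). The union bound over the two failure events then gives the stated $4 e^{-c N^{1/3}}$ with $c = \min(c_1, c_2')$ adjusted accordingly. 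The only delicate point, and thus the main thing to verify with care, is that $\tilde N$ is random: one must condition on $\mathcal{F}_R$ before invoking Bernstein for the inner sum, and then restrict to the event $A_1$ so that the effective length of the inner sum is controlled by $2 N \mu_R$, giving the $N^{1/3}$ rate in the exponent.
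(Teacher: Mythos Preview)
Your proposal is correct and follows the same two-Bernstein strategy as the paper. The organization differs slightly: the paper decomposes $\sum_{i=1}^{\tilde N}(V_i-\mu_V)$ into a deterministic-length main piece $\sum_{i=1}^{\E\tilde N}(V_i-\mu_V)$ plus short correction sums of length $|\tilde N-\E\tilde N|\le N^{2/3}$, and bounds each of the three pieces by Bernstein separately (taking a maximum over the possible lengths of the correction sums). You instead condition on $\tilde N$, apply Bernstein once to the full random-length sum on the event $\{\tilde N\le 2N\mu_R\}$, and then pick up the remaining $\mu_V|\tilde N-N\mu_R|$ via the triangle inequality. Your route is a touch more direct and makes the role of $\mu_V$ explicit; in fact the paper's display tacitly writes $\tilde W-\E\tilde W=\sum_{i\le\tilde N}(V_i-\mu_V)$, which is off by exactly the $\mu_V(\tilde N-\E\tilde N)$ term that your triangle-inequality step supplies. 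One small slip in your write-up: to land under $3N^{2/3}$ you should shrink the threshold in the \emph{first} Bernstein step (for $\tilde N$) to $(1\vee|\mu_V|)^{-1}N^{2/3}$, not the second, so that $|\mu_V|\,|\tilde N-N\mu_R|\le N^{2/3}$; as written the two contributions would sum to $(1\vee\mu_V)^{-1}N^{2/3}+\mu_V N^{2/3}$, which need not be bounded by $3N^{2/3}$.
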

  \begin{proof}
    By Bernstein's inequality~\cite[Theorem~2.8.1]{Vershynin}, there is a $c_{1}>0$
    such that
    \begin{equation}
      \label{eq:SC1}
      \P\cb{ \big|\tilde N - \E \tilde N\big|>N^{2/3}} \leq e^{-c_{1} N^{1/3}},
    \end{equation}
    so it suffices to estimate $|\tilde W - \E\tilde W|>3N^{2/3}$ on the
    event $\mathcal{E} = \{ |\tilde N-\E\tilde N|\leq
    N^{2/3}\}$. Write $\tilde W - \E\tilde W$ as
    \begin{equation}
      \label{eq:SC2}
      \sum_{i=1}^{\tilde N} V_{i}- \E V_{i} = \sum_{i=1}^{\E\tilde N}
      \ob{V_{i}- \E V_{i}} + {\bf 1}_{\tilde N>\E\tilde N}\sum_{i=\E \tilde
        N+1}^{\tilde N} \ob{V_{i} - \E V_{i}} - {\bf 1}_{\tilde N<\E\tilde N}
      \sum_{i=\tilde N+1}^{\E\tilde N}\ob{ V_{i}- \E V_{i}}.
    \end{equation}
    Bernstein's inequality implies there is a $c_{2}>0$ such that
    \begin{equation*}
      \label{eq:SC3}
      \P\cb{|\sum_{i=1}^{\E\tilde N}
      \ob{V_{i}- \E V_{i}}|>N^{2/3}}\leq e^{-c_{2}N^{1/3}},
  \end{equation*}
  and that (interpreting $\sum_{i=k+r}^{k}a_{i}=0$ if $r>0$)
  \begin{align*}
    \P\cb{\Bigg\{ \Big|\sum_{i=\E \tilde N+1}^{\tilde N} \ob{V_{i} - \E
    V_{i}}\Big| >N^{2/3}\Bigg\} \cap \mathcal{E}}
    &=
      \sum_{j=1}^{N^{2/3}} \P\cb{\tilde N = \E\tilde N + j,
      \Big|\sum_{i=1}^{j}\ob{V_{i} - \E V_{i}} \Big|>N^{2/3}} \\
    &\leq \max_{j\in\ccb{1,N^{2/3}}} \P\cb{\Big|\sum_{i=1}^{j}\ob{V_{i} -
      \E V_{i}} \Big|>N^{2/3}} \\
    &\leq e^{-c_{2}N^{1/3}}.
  \end{align*}
  A similar argument bounds the probability that
  $\sum_{i=\tilde N+1}^{\E \tilde N}(V_{i}-\E V_{i})>N^{2/3}$ on
  $\mathcal{E}$. The lemma follows by applying the triangle inequality
  to \eqref{eq:SC2}, using the above estimates, \eqref{eq:SC1}, and a union bound.
\end{proof}

\begin{proposition}
  \label{prop:CofT}
  Let $W$ be the number of visits of a successful exploration process
  $(\tilde Z_{k})_{k\leq T}$ to $1$. Suppose $|\tilde
  Z_{0}|=O(\sqrt{m^{3/2}\log m})$.  There is a $c>0$ such that
  \begin{equation}
    \label{eq:CofT}
    \P\cb{ \big|W-\nu \big|\geq m^{1/3} \mid S(D)}\leq 5e^{-cm^{1/7}}, \quad \nu =
    \Theta\ob{\frac{m}{\log m}}.
  \end{equation}
\end{proposition}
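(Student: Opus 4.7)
The plan is to decompose $W$ over the inner process steps of the exploration procedure and then invoke \Cref{lem:SC} after coupling each step to an independent copy of the ideal gap chain. Because $|\tilde Z_{k}| \geq \eta(m) - 1 \gg 1$ throughout every outer process step (by the definition of the stopping times $t_{j}$), outer steps contribute nothing to $W$. Writing $N$ for the number of full process steps of $(\tilde Z_{k})_{k \leq T}$, which \Cref{lem:ProcessSteps} concentrates around $\mu$, and $W_{j}$ for the number of visits to $1$ during the $j$th inner process step, we have $W = W_{0} + \sum_{j=1}^{N} W_{j}$ with $W_{0}$ absorbing the contribution of the initial process step.

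Next I would invoke \Cref{lem:RIGC} to couple each inner segment of $|\tilde Z_{k}|$ to an independent ideal gap chain, started from the prescribed entry value $z_{j} \in \{\sqrt{m}-1,\sqrt{m}\}$ and run until it first hits $\{\eta(m)-1,\eta(m)\}$. A union bound over the at most $m^{2}$ inner steps upgrades the per-step error $e^{-c m^{1/2}}$ of \Cref{lem:RIGC} into a simultaneous coupling error $e^{-\Omega(m^{1/2})}$. On the coupling event, \Cref{prop:idealgapestimates} (with $j_{1} = \sqrt{m}$, $j_{2} = \eta(m)$) identifies the probability that the $j$th ideal chain reaches $\{\eta(m)-1,\eta(m)\}$ without visiting $1$ as $\Theta(1/\sqrt{\log m})$; whenever the chain does visit $1$, the strong Markov property applied at the first hit, combined with \Cref{lem:IGC-Inner-Contacts}, assigns to $W_{j}$ the conditional law $\sum_{i=1}^{R_{j}} V_{j,i}$, where $R_{j}$ is geometric with success probability $\Theta(1/\sqrt{\log m})$ and the $V_{j,i}$ are IID geometric with success probability $\Theta(1/\sqrt{m})$, all mutually independent across $j$. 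Folding in the Bernoulli indicator $B_{j}$ of the hitting event, the $\N$-valued random variables $R'_{j} = B_{j} R_{j}$ so obtained are IID and sub-exponential (as are the $V_{j,i}$), so $\sum_{j=1}^{N} W_{j} = \sum_{k=1}^{\tilde N} V_{k}$ with $\tilde N = \sum_{j} R'_{j}$, matching the hypotheses of \Cref{lem:SC}.

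\Cref{lem:SC} then delivers the required concentration of $\sum_{j=1}^{N} W_{j}$ around its conditional mean with stretched-exponential tails, and inserting the order of $N$ from \Cref{lem:ProcessSteps} together with the per-step mean $\E[R_{j}] \E[V_{j,i}] = \Theta(\sqrt{m \log m})$ produces $\nu = \Theta(m/\log m)$ and delivers the claimed deviation $m^{1/3}$ and failure probability $e^{-c m^{1/7}}$. The main technical obstacle is the boundary contribution $W_{0}$: the initial process step may begin with $|\tilde Z_{0}|$ as large as $\sqrt{m^{3/2}\log m}$, so one must rerun a single-step version of the argument above (using \Cref{lem:pretime} in place of \Cref{lem:innertime} for the duration estimate, and again coupling to an ideal gap chain) to show $W_{0} = O(m^{1/3})$ with overwhelming probability. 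One then aggregates the failure probabilities arising from the $N$-concentration of \Cref{lem:ProcessSteps}, the simultaneous coupling, the hypothesis that the exploration succeeds, and the $W_{0}$ bound, verifying that each is $e^{-\Omega(m^{1/7})}$ or smaller so that the overall bound $5 e^{-c m^{1/7}}$ is preserved.
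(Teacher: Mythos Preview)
Your approach is essentially the paper's: decompose $W$ over inner process steps, couple each via \Cref{lem:RIGC} to an independent ideal gap chain, represent the visit counts via \Cref{lem:IGC-Inner-Contacts}, and apply \Cref{lem:SC} after concentrating $N$ via \Cref{lem:ProcessSteps}. Two small corrections are needed. First, your justification that outer steps contribute nothing is inaccurate: during an outer step $|\tilde Z_k|$ does descend below $\eta(m)-1$ (the type-(1) phase of an outer step runs until $|\tilde Z|$ hits $\{\sqrt m-1,\sqrt m\}$); the correct reason is \Cref{lem:OutNoTouch}, which the paper invokes. Second, the final inner step may be truncated at time $T$, so it cannot be coupled to a complete ideal-gap-chain excursion; the paper addresses this by sandwiching $W$ between sums over $N$ and $N+1$ full inner-step contributions and applying \Cref{lem:SC} to both endpoints, a point your sketch omits. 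Your treatment of $W_0$ (bound it separately) and the paper's (absorb it into the sum as an $R_0$ stochastically dominated by $Q$) are interchangeable.
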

\begin{proof}
  Let $N$ denote the number of full process steps of $(\tilde
  Z_{k})_{k\leq T}$. By \Cref{lem:ProcessSteps} there is a $c_{1}>0$
  and $\mu=\mu(m)$ such that
  \begin{equation*}
    \P\cb{ \big|N-\mu\big|> \alpha^{-1}m^{3/8}} \leq e^{-c_{1}m^{1/4}}, \qquad \mu =
    \Theta\ob{\frac{m^{1/2}}{\alpha (\log m)^{3/2}}}.
  \end{equation*}
  Hence it suffices to verify \eqref{eq:CofT} on the event
  $|N-\mu|\leq \alpha^{-1}m^{3/8}$.

  By \Cref{lem:OutNoTouch} visits to one only occur during inner
  process steps. To count these visits, couple each inner process step
  of $\tilde Z$ with an independent copy of the ideal gap chain by
  \Cref{lem:RIGC}. This coupling may fail during the final inner
  process step, as the process $\tilde Z$ may end during this process
  step. We obtain a lower bound on $W$ by ignoring the final ideal gap
  chain contribution, and an upper bound by including it. By
  \Cref{lem:IGC-Inner-Contacts}, we have obtained 
  \begin{equation}
    \label{eq:Wbound}
    W\in \Big\llbracket\sum_{i=0}^{N} \sum_{j=1}^{R_{i}-1}V_{i,j}, \sum_{i=0}^{N+1}
      \sum_{j=1}^{R_{i}-1}V_{i,j}\Big\rrbracket,
  \end{equation}
  where the notation means the following. Let $Q$ be a geometric
  random variable with success probability $\Theta( (\log m)^{-1/2})$.
  In \eqref{eq:Wbound}, the $R_{i}$, $i\geq 1$ are identically
  distributed random variables, and conditionally on $R_{i}\geq 1$,
  $R_{i}$ is distributed as $Q$. The variable $R_{0}$ is
  stochastically bounded above by $Q$.  The $V_{i,j}$ are identically
  distributed geometric random variables with success probability
  $\Theta( m^{-1/2})$. All variables are independent. The upper index
  $R_{i}-1$ in the sums accounts for the fact that inner process steps
  do not start at $1$, unlike the hypotheses of
  \Cref{lem:IGC-Inner-Contacts}. The $R_{i}$ are IID as the initial
  distribution of $\tilde Z$ for an inner process step is always the
  same. The variable $R_{0}$ is singled out as it accounts for visits
  to $1$ in the initial process step; the domination claim holds as
  conditional on visiting $1$ in the initial process step, $R_{0}$ has
  the same distribution as $R_{1}$.

  The lemma now follows by a union bound and by applying \Cref{lem:SC}
  to both endpoints of the interval in \eqref{eq:Wbound}, using that
  we can work on the event $|N-\mu|\leq m^{3/8}$. The power $m^{1/7}$
  in the conclusion is for convenience, i.e., to drop logarithmic
  factors.  The order of $\nu$ is found by computing $\E W=\E N \E R \E V = \Theta(\frac{m^{1/2}}{\alpha (\log m)^{3/2}}
  \sqrt{\log m}\sqrt{m}) = \Theta(\frac{m}{\log m})$. 
\end{proof}

\begin{proof}[Proof of \Cref{thm:CofC}]
  We begin by giving the proof when $0<y-x<D\sqrt{m\log m}$. In this
  case, by \Cref{lem:Close}, the number of times $\gt{\V{x}}$ and
  $\gt{\V{y}}$ are at distance one is equal to the number of visits
  $W$ of $(\tilde Z_{k})_{k\leq T}$ to one, where $\tilde
  Z_{0}=y-x$. Let $A$ be the set of edges in a given column at which
  two global traversals are at distance one. On $S(D)$, the edges in
  $A$ are at distance $\Theta(\sqrt{m\log m})$ from one another. The
  arrows at the endpoints of such an edge form a contact with positive
  probability, and up to an exponentially small error this is true for
  any such set $A$ of contacts by \Cref{lem:RW-CD}. As a result we obtain that
  $\P\cb{ |X-\E{X}|\geq k^{2/3} \mid W=k, S(D)}\leq e^{-ck^{1/3}}$ by
  Bernstein's inequality. The theorem follows by \Cref{prop:CofT}.

  Next we consider the general case. The additional ingredient to
  handle is that since global traversals do not quite vertically span
  the torus, it may not be possible to follow all of $\gt{\V{x}}$ and
  $\gt{\V{y}}$ starting from two vertices at distance less than
  $D\sqrt{m\log m}$. Instead, let $\V{y}'$ be the closest point in
  $\gt{\V{y}}$ to $\V{x}$; by \Cref{lem:GTnD} \eqref{eq:GTnD1},
  $\V{y}'$ at distance $O(m^{3/4}(\log m)^{1/2})$ from
  $\V{x}$. Similarly there is a $\V{x}'\in \gt{\V{x}}$ within distance
  $O(\sqrt{m\log m})$ of $\V{y}$. To determine the number $X$ of
  contacts between $\gt{\V{x}}$ and $\gt{\V{y}}$ we run an exploration
  process started from (i) $\V{x}$ and $\V{y}'$ and (ii) from $\V{x}'$
  and $\V{y}$. In case (i) we run the process until $\gt{\V{y}}$ ends,
  and in case (ii) until $\gt{\V{x}}$ ends. The total time run by
  these exploration processes is
  $2\Gamma n - \Theta(m^{3/4}(\log m)^{1/2})$. The same argument as in
  the first paragraph relates $\tilde Z_{k}=1$ to contacts, and hence
  the claim now follows by applying \Cref{prop:CofT} (formally this
  lemma applies to a single exploration procedure, but the additional
  initial/terminal steps due to having two procedures do not alter the conclusion).
\end{proof}

\section{Dynamics on cycles}
\label{sec:dynamics}

\Cref{sec:HC-GD} introduced Glauber dynamics that leave the
equilibrium distribution of the hard-core model invariant. Since
$\Phi$ is equivalent to a collection of hard-core models, running the
Glauber dynamics on each column of $\T_{n,m}$ preserves $\Phi$ and its
cycle structure. \Cref{sec:GD-Bijections} describes the resulting
effective dynamics on the cycle structure, which imitate the random
transposition dynamics. This will allow us to prove \Cref{thm:PD1} in
\Cref{sec:cycles} by making use of methods developed by
Schramm~\cite{Schramm2011}.

Schramm's dynamical ideas are at this point relatively standard, and
have been used (and exposited) in other works concerning random
permutations, e.g.,~\cite{AKM,BKLM}. As such we provide detailed
proofs when our setting necessitates adjustments, but we have omitted
proofs when (up to notation) they are as given by Schramm.

\subsection{Glauber dynamics and the induced split-merge dynamics for
  $\Phi$} 
\label{sec:GD-Bijections}

\subsubsection{Dynamics}

The Glauber dynamics from \Cref{sec:HC-GD} naturally extend to give a
dynamics on the $n$ (disjoint) cycles $C_{m}$ that comprise the torus
$\T^{\star}_{n,m}$. To be precise, we define the \emph{directed
  spatial permutation Glauber dynamics} by repeatedly
\begin{enumerate}
\item choosing a uniformly random column $j\in \ccb{n}$, and
\item performing a Glauber update on the hard-core configuration on
  the $j$\textsuperscript{th} column of $\T^{\star}_{n,m}$.
\end{enumerate}
Updating the hard-core configuration may change the
corresponding arrow configuration, and hence the associated bijection
of $\T_{n,m}$. We record two basic facts about these dynamics.
\begin{lemma}
  \label{lem:Phi-dyn}
  Suppose $(\Phi_{i})_{i\geq 0}$ is the directed spatial permutations
  Glauber dynamics started from equilibrium, i.e., $\Phi_{0}=\Phi$
  conditioned to contain no global shift. Then:
  \begin{enumerate}
  \item $\Phi_{i}$ is distributed as $\Phi_{0}$ for all $i$, i.e., $\Phi_{0}$
    is the invariant measure of the dynamics.
  \item There is a $p_{0}>0$  (independent of $m$) such that
    $\P\cb{\Phi_{i+1}\neq\Phi_{i}}\geq p_{0}$ for all $i\geq 0$.
  \end{enumerate}
\end{lemma}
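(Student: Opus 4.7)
The plan is to prove the two parts separately. For part~(1), by \Cref{prop:HC-Rep}, \Cref{rem:HC-Perm}, and \Cref{rem:NGS}, the law of $\Phi_0$ is in bijective correspondence with the product, over the $n$ columns of $\T^\star_{n,m}$, of the hard-core measure on $C_m^\star$ at activity $\lambda = a^{2}/(1-a)^{2}$. A single step of the directed spatial permutation Glauber dynamics picks a uniformly random column $j$ and performs one hard-core Glauber update there, leaving the other columns untouched. Since hard-core Glauber on $C_m^\star$ satisfies detailed balance with respect to the hard-core measure, and the columns are independent, the product measure is invariant. The only subtle point is verifying that no HC configuration can encode a global shift: under \Cref{rem:HC-Perm}, a global up shift in column $j$ would require $\sigma_{\V{x}+\frac{1}{2}}=1$ for every $\V{x}$ in that column, violating the hard-core constraint since such sites are pairwise adjacent in $C_m^\star$; a symmetric argument rules out global down shifts. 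Hence the HC dynamics never exit the ``no global shift'' region, so they preserve the conditional law of $\Phi$.

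For part~(2), I expand one step of the dynamics as picking a uniformly random vertex $v \in V(\T^\star_{n,m})$ (with probability $1/(nm)$) and resampling $\sigma_v$ from its equilibrium conditional distribution given $(\sigma_{v-1},\sigma_{v+1})$. By \Cref{rem:Glauber-SM}, the value of $\sigma_v$, and hence $\Phi$, can change only when $\sigma_{v-1}=\sigma_{v+1}=0$. I decompose
\begin{equation*}
\P\cb{\Phi_{i+1}\neq\Phi_i} \, = \, \frac{1}{nm}\sum_{v} \P\cb{\sigma_{v-1}=\sigma_{v+1}=0}\cdot \P\cb{\sigma_v^{\mathrm{new}}\neq\sigma_v \mid \sigma_{v-1}=\sigma_{v+1}=0}
\end{equation*}
and bound each factor below by an $m$-independent positive constant.

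The first factor in the summand is independent of $v$ by translation invariance of HC on $C_m^\star$ and independence across columns. By the correlation decay of \Cref{lem:HC-CD} and the weak convergence $\P_m \Rightarrow \P_\Z$ recorded at the end of \Cref{sec:HC-CD}, it converges as $m\to\infty$ to $\P_\Z\cb{\sigma_{-1/2}=\sigma_{3/2}=0}>0$, positivity being manifest since any independent set avoiding these two sites contributes positive weight. Hence it is bounded below by some $q(\lambda)>0$ uniformly in $m$. For the second factor, the Markov property of HC gives that $\sigma_v$ conditional on both neighbours being vacant is $\mathrm{Bernoulli}\bigl(\lambda/(1+\lambda)\bigr)$, and the resampled value is an independent draw from the same law, so the two disagree with probability exactly $2\lambda/(1+\lambda)^{2}>0$. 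Multiplying yields $p_0 \geq 2q(\lambda)\lambda/(1+\lambda)^{2}>0$, independent of $m$ and $i$. I do not anticipate any genuine obstacle; the only slightly delicate point is the uniform-in-$m$ positivity of $\P_m\cb{\sigma_{v-1}=\sigma_{v+1}=0}$, which follows routinely from the transfer-matrix recurrence mentioned in the proof of \Cref{lem:HC-CD}.
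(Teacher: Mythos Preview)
Your proof is correct and, for part~(1), essentially identical to the paper's (your extra remark that no hard-core configuration can encode a global shift is a useful sanity check that the paper leaves implicit in the bijection of \Cref{rem:HC-Perm}).

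For part~(2) the two arguments differ in flavor. You work in equilibrium: you compute $\P_m\cb{\sigma_{v-1}=\sigma_{v+1}=0}$ and argue it is uniformly positive via convergence to $\P_\Z$, then use the spatial Markov property to get the exact flip probability $2\lambda/(1+\lambda)^2$. The paper instead makes a purely combinatorial observation: in \emph{any} hard-core configuration on $C_m$, the set of vertices $v$ with $\sigma_{v-1}=\sigma_{v+1}=0$ (equivalently, the contacts) has density bounded below by an absolute constant, since each occupied vertex is itself such a $v$ and blocks at most two others. This deterministic bound makes the appeal to weak convergence unnecessary and yields a statement that does not rely on $\Phi_i$ being in equilibrium. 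Your route is slightly more quantitative (you get an explicit $p_0$), while the paper's is more robust; both are adequate here since part~(1) already puts you in equilibrium.
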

\begin{proof}
  \Cref{prop:HC-Rep} implies that $\Phi$ conditioned on having no
  global shift is equivalent to a product of hard-core models on the
  columns of $\T^{\star}_{n,m}$. The first claim follows as the
  Glauber dynamics leave the hard-core models on each column
  invariant. For {(2)}, it suffices to show that the directed spatial
  permutations Glauber dynamics have a uniformly positive probability
  to change the status of some vertex $\V{v}$ at which a contact
  occurs. In hard-core model language, these transitions (i) set
  $\V{v}$ to be unoccupied if it was previously occupied and (ii) set
  $\V{v}$ to be occupied if it and its neighbours were previously
  unoccupied. The claim follows as the density of vertices satisfying
  the conditions in (i) and (ii) is deterministically positive, and so
  are the transition probabilities.
\end{proof}

\subsubsection{Induced split-merge dynamics}
\label{sec:GD-SM}

This section describes the effective split-merge dynamics on cycles
that are induced by the directed spatial permutation Glauber
dynamics. Some results in this section make statements
about properties of global traversals without discussing the arbitrary
choices that must be made to decompose the cycles of $\Phi$ into
global traversals. The conclusions are well-defined as they hold for any such
decomposition; recall the discussion following \Cref{def:GTn}.

The next lemma constructs a good event on which we will be able to
analyse the effective dynamics. The error estimates are convenient
ones that suffice; we have not made an attempt to optimize
them. Recall that $C'$ is the constant in $C(m)=C'\sqrt{m\log m}$, and
recall that Theorem~\ref{thm:CofC} describes the concentration of the
number of contacts $X$ between two global traversals.
\begin{lemma}
  \label{lem:G}
  If $C'$ is sufficiently large, there is an event
  $\mathcal{G}\subset S(D)$ with $\P\cb{\mathcal{G}}=1-m^{-3}$ such
  that on the event $\mathcal{G}$:
  \begin{enumerate}
  \item \label{i:G1} There is a $c>0$ such that $\{\cycle\}_{\gtt} /
    \floor{\cycle}_{\gtt} < cm^{-1/8}$ for all cycles $\cycle$.
  \item \label{i:G2} For all cycles $\cycle_{i}$, $\cycle_{j}$, the number
      of contacts between them is
      \begin{equation*}
        \floor{\cycle_{i}}_{\gtt}(\floor{\cycle_{j}}_{\gtt}-1_{i=j})
        \E X (1+o(m^{-1/9})),
      \end{equation*}
      where $X$ denotes the number of contacts between two disjoint
      global traversals, and the implicit constant in $o(m^{-1/9})$
        is absolute.
  \item \label{i:G3} There is a $c_{2}>0$ such that each global traversal contains
    $c_{2}\Gamma n$ contacts.
  \end{enumerate}
\end{lemma}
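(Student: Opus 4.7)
The plan is to set $\mathcal{G} = S(D) \cap \mathcal{G}_{2} \cap \mathcal{G}_{3}$, where $\mathcal{G}_{k}$ is the event on which item $k$ holds; item (1) is already included in $S(D)$ via \Cref{lem:GTintegral} and \Cref{rem:Sdredef}. Each failure event $\mathcal{G}_{k}^{c}$ will be shown to have probability $o(m^{-3})$; combined with $\P\cb{S(D)^{c}} = o(m^{-D/2})$ and the freedom to take $C'$ (hence $D$) large, the union bound then gives $\P\cb{\mathcal{G}} \geq 1 - m^{-3}$ for $m$ large.

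Item (3) is almost immediate from \Cref{lem:Strand-Contacts}: applying it with $\gamma = 1/2$ to each of the at most $m$ strands in $\T_{n,m}$ and union-bounding shows that every strand contains at least $\alpha n / 2$ contacts except on an event of probability at most $m e^{-c n/4}$. Since a global traversal is a concatenation of $\Gamma$ distinct strands, it then carries at least $c_{2}\Gamma n$ contacts with $c_{2} = \alpha/2$, and $m e^{-c n/4} = o(m^{-3})$.

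For item (2), the plan is to apply \Cref{thm:CofC} to every pair $(\V{u},\V{v})$ of potential global-traversal starting vertices and union bound over the at most $m^{2}$ such pairs; this succeeds with probability at least $1 - m^{2}\exp(-\Theta(m^{1/7})) = 1 - o(m^{-3})$. On this event, every pair of disjoint global traversals satisfies $|X - \E X| \leq m^{1/3}$. I would then decompose contacts between $\cycle_{i}$ and $\cycle_{j}$ according to the pair of constituent objects---complete global traversal or fractional piece---that the two endpoints of the contact belong to. The contribution from pairs of complete global traversals equals $\floor{\cycle_{i}}_{\gtt}(\floor{\cycle_{j}}_{\gtt} - 1_{i=j})\E X$ up to an additive error at most $\floor{\cycle_{i}}_{\gtt}\floor{\cycle_{j}}_{\gtt} m^{1/3}$, giving a relative error $O(m^{1/3}/\E X) = O(m^{-2/3}\log m) = o(m^{-1/9})$.

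The main obstacle is bounding the contribution of contacts involving a fractional piece. Item (1) constrains each fractional piece to consist of at most $c m^{-1/8}\floor{\cycle}_{\gtt}\Gamma$ strands, and \Cref{lem:Strand-Contacts} bounds each strand by $O(n)$ contacts; the exponential correlation decay of \Cref{lem:RW-CD} should then control how these contacts partition among the other cycles, in the spirit of the arguments in \Cref{sec:tbd}, yielding a correction that is again $o(m^{-1/9})$ relative to the main term. Putting the three items together produces $\mathcal{G}$ with the required probability.
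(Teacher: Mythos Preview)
Your treatment of items (1) and (3), and of the main term in item (2) via \Cref{thm:CofC} and a union bound, is the same as the paper's. The difference is in how you propose to handle contacts involving a fractional global traversal. You suggest invoking the correlation decay of \Cref{lem:RW-CD} and the machinery of \Cref{sec:tbd} to control how contacts from a fractional piece distribute among the other cycles. This is an over-complication: the paper sidesteps any such partitioning argument with a crude deterministic bound. The \emph{total} number of contacts contained in the fractional piece of $\cycle_{i}$ is at most twice its length in vertices (each vertex borders at most two vertical edges of $\T^{\star}_{n,m}$), and by item (1) this length is $O(k_{i}m^{-1/8})\cdot n$ where $k_{i}=\floor{\cycle_{i}}_{\gtt}$. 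The paper then asserts this is $k_{i}\E X \cdot o(m^{-1/9})$. The point is that this bound holds regardless of which cycle the other endpoint of each contact lies in, so no distributional control is needed: even if every fractional contact of $\cycle_{i}$ went to $\cycle_{j}$, the contribution would still be $o(m^{-1/9})$ relative to the main term. The final step is the elementary inequality $k_{i}+k_{j}\leq 2k_{i}k_{j}$ (valid since $k_{i},k_{j}\geq 1$ on $S(D)$), which converts the additive error $O(k_{i}+k_{j})\E X\cdot o(m^{-1/9})$ into the multiplicative form $k_{i}k_{j}\E X\cdot o(m^{-1/9})$ required by the statement.
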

\begin{proof}
  Take $D>6$. Choose $C'$ to be at least as large as indicated in
  \Cref{cor:All-Separated}, so that $\P\cb{S(D)}\geq 1-o(m^{-3})$ by
  \Cref{rem:Sdredef}. Then item \eqref{i:G1} holds on $S(D)$ by
  \Cref{lem:GTintegral} and \Cref{rem:Sdredef}. Let $\mathcal{E}_{3}$
  be the event that item \eqref{i:G3} holds. Then
  $\P\cb{\mathcal{E}_{3}}\geq 1- e^{-cm}$ by
  \Cref{lem:Strand-Contacts} and a union bound. We will construct
  $\mathcal{G}$ as sub-event of $S(D)\cap \mathcal{E}_{3}$ by
  intersecting the latter with a further event $\mathcal{E}_{2}$.

  Let $\mathcal{E}_{2} = \bigcap_{i,j}\mathcal{E}_{2}(i,j)$, where
  $\mathcal{E}_{2}(i,j)$ indicates that item \eqref{i:G2} holds for
  $\cycle_{i}$ and $\cycle_{j}$. It suffices to prove
  $\P\cb{\mathcal{E}_{2}(i,j)}$ occurs with all but stretched
  exponentially small probability. To show this, suppose $\cycle_{i}$
  and~$\cycle_{j}$ are comprised of $k_{i}$ and $k_{j}$ global
  traversals, respectively. On $S(D)$, $k_{i},k_{j}\geq 1$. The number
  of contacts between the global traversals of $\cycle_{i}$ and
  $\cycle_{j}$ is a sum of $k_{i}(k_{j}-1_{i=j})$ copies of~$X$,~$X$
  the number of contacts between two disjoint global traversals.  By
  \Cref{thm:CofC} and a union bound the number of contacts between the
  global traversals of $\cycle_{i}$ and $\cycle_{j}$ satisfies the
  desired estimate.

  What remains is to show that the contacts that involve fractional
  global traversals of $\cycle_{i}$ and $\cycle_{j}$ are taken into
  account by the $o(m^{-1/9})$ error term.  The number of contacts
  contained in the fractional global traversal of a cycle is at most
  twice the length of the fractional global traversal. On $S(D)$, item
  \eqref{i:G1} thus bounds above the number of contacts in the
  fractional global traversal of $\cycle_{i}$ by
  $2ck_{i} m^{-1/8}n = 2k_{i}\E X\cdot o( m^{-1/9})$.  Thus the number of
  contacts between $\cycle_{i}$ and $\cycle_{j}$ that are contained in
  fractional global traversals is at most
  $2(k_{i}+k_{j}) \E X \cdot o(m^{-1/9})$. The claim follows since
  $k_{i}+k_{j}\leq 2k_{i}k_{j}$.
\end{proof}

\begin{lemma}
  \label{lem:GTact}
  Conditional on $\mathcal{G}$, the probability that a uniformly
  chosen contact is contained in a fractional global traversal of a cycle
  $\cycle$ is at most $O(m^{-1/8}\floor{\cycle}_{\gtt} /
  \sum_{j}\floor{\cycle_{j}}_{\gtt})$. As a consequence, the probability that a
  uniformly chosen contact is between two global traversals is at least
  $1-O(m^{-1/8})$.
\end{lemma}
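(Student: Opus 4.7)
The plan is to upper bound the number of contacts inside a fractional global traversal and lower bound the total number of contacts in the system, then combine the two.

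For the numerator, each strand has $n$ vertices, each incident to exactly two vertical edges (the only objects that can carry contacts), so any strand contains at most $2n$ contacts. Since the fractional global traversal of a cycle $\cycle$ is, by construction, a sequence of $\{\cycle\}_{\gtt}\Gamma$ consecutive strands, it contains at most $2\{\cycle\}_{\gtt}\Gamma n$ contacts. For the denominator, item \eqref{i:G3} of \Cref{lem:G} asserts that on $\mathcal{G}$ every complete global traversal contains at least $c_{2}\Gamma n$ contacts, so the total number of contacts in the system is at least $c_{2}\Gamma n\sum_{j}\floor{\cycle_{j}}_{\gtt}$.

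Taking the ratio and invoking item \eqref{i:G1} of \Cref{lem:G} (which gives $\{\cycle\}_{\gtt}\leq c\,m^{-1/8}\floor{\cycle}_{\gtt}$) then yields
\[
\frac{2\{\cycle\}_{\gtt}\Gamma n}{c_{2}\Gamma n\sum_{j}\floor{\cycle_{j}}_{\gtt}}
\;=\;O\!\left(\frac{m^{-1/8}\floor{\cycle}_{\gtt}}{\sum_{j}\floor{\cycle_{j}}_{\gtt}}\right),
\]
which is the first claim. The second then follows by a union bound over cycles: a contact fails to lie between two complete global traversals only if at least one of its endpoints belongs to the fractional global traversal of some $\cycle$, and summing the bound above over all cycles gives $O(m^{-1/8})$, since $\sum_{\cycle}\floor{\cycle}_{\gtt}/\sum_{j}\floor{\cycle_{j}}_{\gtt}=1$.

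There is no genuine obstacle; the estimate is essentially a reweighting of item \eqref{i:G1} by the uniform lower bound from item \eqref{i:G3}. The only point worth flagging is that the argument deliberately avoids using the sharp contact count of \Cref{thm:CofC} for the contribution from fractional traversals, which would in any case be awkward because fractional traversals are too short for the exploration procedure of \Cref{sec:Contacts} to produce concentration. The cruder deterministic upper bound of $2n$ contacts per strand is ample for the $O(m^{-1/8})$ loss that we can afford.
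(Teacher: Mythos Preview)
Your proof is correct and follows essentially the same route as the paper: bound the number of contacts in the fractional part of $\cycle$ by twice its length, lower bound the total number of contacts via item~\eqref{i:G3}, and convert the ratio using item~\eqref{i:G1}. The paper's presentation is slightly less explicit about the numerator/denominator split, but the argument is the same; your only minor slip is that summing the per-traversal contact counts can double-count contacts (each contact has two endpoints), but this costs at most a factor of $2$ and is absorbed into the $O(\cdot)$.
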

\begin{proof}
  Fix a decomposition of the cycles into global traversals. If there
  is a contact at $\V{v}\in\T^{\star}_{n,m}$, let
  $\{\V{v}_{1},\V{v}_{2}\} = \{ \V{v}\pm \frac12\}$. Let $\cycle_{1}$
  and $\cycle_{2}$ denote the cycles containing $v_{1}$ and $v_{2}$
  ($\cycle_{1}=\cycle_{2}$ may occur).

  Let $E_{i}$ be the event that $\V{v}_{i}$ is contained in the
  fractional part of $\cycle_{i}$. By \Cref{lem:G}\eqref{i:G1}, at
  most a $O(m^{-1/8})$ fraction of the strands of $\cycle_{i}$ are in
  the fractional part of $\cycle_{i}$, and the number of contacts in
  the fractional part of $\cycle_{i}$ is at most twice the length of
  the fractional part. Since the number of contacts in the global
  traversals of $\cycle_{i}$ is at least a constant fraction of the
  total length of the global traversals by \Cref{lem:G}\eqref{i:G3},
  the probability of $E_{i}$ is thus $O(m^{-1/8})$. The first
  conclusion follows. A union bound to estimate
  $\P\cb{E_{1}^{c}\cap E_{2}^{c}}$ then yields the second conclusion.
\end{proof}

\begin{lemma}
  \label{lem:CycleDynGT}
  Let $E_{i,j}$ be the event that a uniformly chosen contact is
  between $\cycle_{i}$ and $\cycle_{j}$. Let $\mathcal{E}_{\gtt}$ be
  the event that a uniformly selected contact is between two global
  traversals.  Then
  $\P\cb{E_{i,j}\mid \mathcal{E}_{\gtt}\cap \mathcal{G}}$ is
  proportional to
  $\floor{\cycle_{i}}_{\gtt} \floor{\cycle_{j}}_{\gtt}$ in the sense
  that
    \begin{equation*}
    \label{eq:CycleDynGT}
    \P\cb{E_{i,j}\mid \mathcal{E}_{\gtt}\cap \mathcal{G}} =
    \frac{\floor{\cycle_{i}}_{\gtt} (\floor{\cycle_{j}}_{\gtt}-1_{i=j})}
    {\sum_{k,\ell}\floor{\cycle_{k}}_{\gtt}   \floor{\cycle_{l}}_{\gtt}}(1+o(m^{-1/9})).
  \end{equation*}
  Moreover, in the case $i=j$ the contact is equally likely to be
  between any two distinct global traversals of $\cycle_{i}$, up to an
  $o(m^{-1/9})$ error. The implicit constants in $o(m^{-1/9})$
    are absolute.
\end{lemma}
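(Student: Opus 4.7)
The plan is to compute $\P\cb{E_{i,j}\mid \mathcal{E}_{\gtt}\cap \mathcal{G}}$ directly as a deterministic ratio of contact counts on each $\Phi \in \mathcal{G}$. Writing $T_{\gtt}(i,j)$ for the number of contacts whose endpoints lie in a global traversal of $\cycle_i$ and a global traversal of $\cycle_j$ respectively, and $T_{\gtt}=\sum_{k,\ell}T_{\gtt}(k,\ell)$, the only randomness given $\Phi$ is the uniform choice of contact, so
$$\P\cb{E_{i,j}\mid \mathcal{E}_{\gtt}\cap \mathcal{G}} \, = \, \frac{T_{\gtt}(i,j)}{T_{\gtt}}.$$

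By \Cref{lem:G}(2), on $\mathcal{G}$ every pair of cycles satisfies $T_{\gtt}(i,j) = \floor{\cycle_i}_{\gtt}(\floor{\cycle_j}_{\gtt}-1_{i=j})\,\E X\,(1+o(m^{-1/9}))$; indeed the proof of that lemma produces this estimate for $T_{\gtt}(i,j)$ directly by applying \Cref{thm:CofC} with a union bound to the $\floor{\cycle_i}_{\gtt}(\floor{\cycle_j}_{\gtt}-1_{i=j})$ pairs of distinct global traversals, one in $\cycle_i$ and one in $\cycle_j$. Summing over $(i,j)$ gives $T_{\gtt} = \sum_{k,\ell}\floor{\cycle_k}_{\gtt}(\floor{\cycle_\ell}_{\gtt}-1_{k=\ell})\,\E X\,(1+o(m^{-1/9}))$.

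The diagonal correction $\sum_k \floor{\cycle_k}_{\gtt}$ equals $\Theta(\sqrt{m\log m})$ by~\eqref{eq:SysSize} and $\Gamma = \Theta(\sqrt{m/\log m})$, whereas $\bigl(\sum_k \floor{\cycle_k}_{\gtt}\bigr)^2 = \Theta(m\log m)$. The correction is therefore an $O((m\log m)^{-1/2}) = o(m^{-1/9})$ multiplicative factor of the double sum, so $T_{\gtt} = \bigl(\sum_{k,\ell}\floor{\cycle_k}_{\gtt}\floor{\cycle_\ell}_{\gtt}\bigr)\E X\,(1+o(m^{-1/9}))$. The factor $\E X$ cancels in the ratio, yielding the claimed identity.

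The ``moreover'' statement for $i=j$ follows from the per-pair form of the same estimate: each of the $\floor{\cycle_i}_{\gtt}(\floor{\cycle_i}_{\gtt}-1)$ ordered pairs of distinct global traversals of $\cycle_i$ carries an equal contact count up to multiplicative $(1+o(m^{-1/9}))$, so conditionally on the contact lying between two global traversals of $\cycle_i$, the pair containing its endpoints is uniform up to this error. The main technical point is just ensuring the $(1+o(m^{-1/9}))$ control is uniform over all $O(m\log m)$ pairs of global traversals; since the tail in \Cref{thm:CofC} is $e^{-\Theta(m^{1/7})}$, the union bound costs essentially nothing and this uniformity is already encoded in $\mathcal{G}$, so no fundamentally new argument is required beyond the bookkeeping just described.
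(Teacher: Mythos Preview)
Your proof is correct and follows essentially the same approach as the paper: both arguments take the ratio of contact counts, invoke \Cref{lem:G}\eqref{i:G2} for the numerator and denominator, and absorb the diagonal correction $\sum_k \floor{\cycle_k}_{\gtt} = \Theta(\sqrt{m\log m})$ into the $o(m^{-1/9})$ error. You are slightly more careful than the paper in distinguishing $T_{\gtt}(i,j)$ from the total contact count (noting that the proof, rather than the statement, of \Cref{lem:G}\eqref{i:G2} gives what you need) and in spelling out the ``moreover'' clause via the per-pair concentration from \Cref{thm:CofC}.
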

\begin{proof}
  To estimate this probability we use that every contact is either
  between two cycles, or between a cycle and itself. The number of
  contacts between two cycles is given by
  \Cref{lem:G}\eqref{i:G2}. Thus the probability that a uniformly
  chosen contact that is contained in two global traversals is between
  $\cycle_{i}$ and $\cycle_{j}$ is (if $i\neq j$)
  $\frac{\floor{\cycle_{i}}_{\gtt} \floor{\cycle_{j}}_{\gtt}}
  {\sum_{k,\ell}\floor{\cycle_{k}}_{\gtt}
    \floor{\cycle_{\ell}}_{\gtt}}$ up to an error of size
  $o(m^{-1/9})$. If $i=j$ the numerator requires the subtraction of
  $1$ as no global traversal comes into contact with itself. (For the
  same reason, the denominator is in fact a slight overcount:
  $\floor{\cycle_{k}}_{\gtt}^{2}$ should be
  $\floor{\cycle_{k}}_{\gtt}(\floor{\cycle_{k}}_{\gtt}-1)$. This can
  be absorbed into the $o(m^{-1/9})$ error term since the total number
  of global traversals is of order $\sqrt{m\log m}$.)
\end{proof}

\subsection{Proof of Theorem~\ref{thm:PD1}}
\label{sec:cycles}

There are two steps in our proof, both using that the split-merge
dynamics induced by the Glauber dynamics is very similar to the random
transposition dynamics. The first step shows that most global
traversals in $\Phi$ are in fact contained in large cycles. The second
step, establishing the $\mathsf{PD}(1)$ limit, makes use of the fact
that we can perfectly couple the two dynamics for a (relatively) short
amount of time. This is sufficient as the dynamics acts very rapidly
on the large cycles of $\Phi$ --- this fact was one of the key
observations used in~\cite{Schramm2011} (see also~\cite{IoffeToth}).

\subsubsection{Existence of large cycles}
\label{sec:large}

The next two lemmas are analogues of Lemmas~2.3 and~2.4 of~\cite{Schramm2011}.

\begin{definition}
  The set of vertices in cycles $\cycle$ with
  $\floor{\cycle}_{\gtt}\geq j$ is denoted $\cycles(j)$.
\end{definition}
Note that on $\mathcal{G}$, $\cycles(1)$ is the set of all vertices,
as $\floor{\cycle}_{\gtt}\geq 1$. 

\begin{lemma}
  \label{lem:Sch2.3a}
  Let $\varepsilon\in\ob{0,\frac{1}{8}}$. Then
  \begin{equation}
    \label{eq:Sch2.3-1}
    \E \cb{ \abs{ \cycles(1)\setminus\cycles(\varepsilon C(m))}} \leq
    O(1) \varepsilon |\log \varepsilon| C(m).
  \end{equation}
\end{lemma}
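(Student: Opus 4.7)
I would adapt the strategy of Schramm's Lemma~2.3 in~\cite{Schramm2011}. The essential inputs are stationarity of $\Phi$ under the Glauber dynamics (\Cref{lem:Phi-dyn}) and the approximate size-biased split--merge description of the induced cycle dynamics in \Cref{lem:CycleDynGT}, both valid on the good event $\mathcal{G}$ of \Cref{lem:G} up to $o(m^{-1/9})$ errors. First I would restrict to $\mathcal{G}$: the complement has probability $O(m^{-3})$ and contributes $O(m^{-1})$ to the expectation, which is negligible against the target. On $\mathcal{G}$, item~\eqref{i:G1} of \Cref{lem:G} identifies $|\cycle|$ with $\floor{\cycle}_{\gtt}\,\Gamma n$ up to a $1+o(1)$ factor, so the task reduces to bounding the expected traversal count in small cycles,
\[
  G_{K} \, = \, \sum_{\cycle \,:\, 1 \leq \floor{\cycle}_{\gtt} < K} \floor{\cycle}_{\gtt}, \qquad K = \ceil{\varepsilon C(m)}.
\]

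Next I would apply a layer-cake decomposition. With $N_{s} = \#\{\cycle : \floor{\cycle}_{\gtt} \geq s\}$ and $T = \sum_{\cycle}\floor{\cycle}_{\gtt} = \Theta(C(m))$, one has $G_{K} \leq \sum_{s=1}^{K-1} N_{s}$, so it suffices to prove the cycle-count estimate $\E N_{s} \leq C\log(T/s)$ uniformly in $s \in \ccb{1,K}$. This is the analogue for our effective dynamics of the classical estimate $\E\,\#\{\text{cycles of length}\geq k\} \asymp \log(N/k)$ for a uniformly random permutation of $N$ elements, and I would derive it from the stationarity identity $\E[\Delta N_{s}]=0$ by enumerating the possible one-step jumps: a split of a cycle of size $k\geq 2s$ yields $\Delta N_{s}=+1$ with probability $\approx 1-2s/k$ and $0$ otherwise; a split of a cycle of size $k\in\ccb{s,2s-1}$ yields $\Delta N_{s}=-1$; a merger of two cycles both of size $\geq s$ yields $\Delta N_{s}=-1$; and a merger of two cycles of size $<s$ whose total is $\geq s$ yields $\Delta N_{s}=+1$. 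Substituting the size-biased rates of \Cref{lem:CycleDynGT} into the balance equation and solving gives $\E N_{s} \leq C\log(T/s)$. Summation then yields
\[
  \E G_{K} \, \leq \, \sum_{s=1}^{K-1} C\log(T/s) \, = \, O(K\log(T/K)) \, = \, O(\varepsilon|\log\varepsilon|)\,C(m),
\]
which implies the lemma after the traversal-to-vertex scaling of the first paragraph.

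The principal technical obstacle I anticipate is uniform-in-$s$ error control. The effective split--merge rates of \Cref{lem:CycleDynGT} are accurate only up to multiplicative errors $o(m^{-1/9})$, and the balance equation at each $s$ additionally receives corrections from fractional global traversals controlled by \Cref{lem:G}\eqref{i:G1}. Propagating these errors across the $K=\Theta(\sqrt{m\log m})$ summands without destroying the logarithmic accumulation in $s$ is the main book-keeping burden; conceptually the argument is exactly Schramm's.
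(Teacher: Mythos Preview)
Your plan has a genuine gap at the key step ``substituting the size-biased rates into the balance equation and solving gives $\E N_s\leq C\log(T/s)$''. If you actually write out $\E[\Delta N_s\mid\Phi]$ under (approximate) size-biased split--merge, the count $N_s$ does not appear: the split contributions are sums like $\sum_{i:k_i\geq 2s}k_i(k_i-2s+1)$ and $\sum_{i:s\leq k_i<2s}k_i(2s-k_i-1)$, while the merge contributions are quadratic terms $\sum_{i\neq j}k_ik_j\,{\bf 1}_{k_i,k_j\geq s}$ and analogues. Setting the expectation to zero yields a relation among first- and second-order functionals of the cycle sizes, from which $\E N_s$ cannot be isolated without additional input on the joint law. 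Appealing instead to the known formula $\E N_s\asymp\log(N/s)$ for a uniform permutation would be circular: the stationary law here is that of $\Phi$, not the uniform law, and establishing that the large-cycle statistics of $\Phi$ resemble those of a uniform permutation is precisely the goal of the section.

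The paper proceeds by a genuinely different mechanism: it runs the Glauber dynamics for $t\asymp C(m)\log C(m)$ non-lazy steps and uses a dyadic growth argument. Over intervals $[\tau_s,\tau_{s+1})$ of length $\mu_s\asymp (C(m)/2^s)\log(C(m)/2^s)$ one tracks the vertices whose cycle has not yet reached size $2^{s+1}$, bounding separately the expected mass lost to small splits (sets $F^k$) and to failure-to-merge (sets $B^s$). Stationarity is invoked only at the end, to transfer the time-$t$ conclusion back to time $0$. The inputs you list (\Cref{lem:G}, \Cref{lem:GTact}, \Cref{lem:CycleDynGT}) are exactly those used, but they feed into per-step merge-rate lower bounds and split-rate upper bounds accumulated over many steps, not into a one-step balance identity.
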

Our proof of \Cref{lem:Sch2.3a} follows the same strategy Schramm used
to prove~\cite[Lemma~2.3]{Schramm2011} in the context of random
transpositions. There are two main differences. First, Schramm's lemma
was a non-equilibrium estimate, and some of the considerations
required are not present in our equilibrium setting. Second, we have
to take some care due to the fact that our control over the
split-merge dynamics is based on global traversals. Cycles are not
entirely comprised of global traversals, and this leads to errors when
comparing the dynamics to random transposition split-merge
dynamics. The proof shows these errors are negligible. Our exposition
of the proof follows that presented in~\cite{Schramm2011}, both to
facilitate comparison, and to emphasize that the key ideas are from
this reference.
\begin{proof}[Proof of \Cref{lem:Sch2.3a}]
  Let $(\Phi_{k})_{k\geq 0}$ denote the directed spatial permutation
  Glauber dynamics started from equilibrium, i.e., $\Phi$ conditional
  to contain no global shifts. We recall this is equivalent to $\Phi$
  itself as $m\to\infty$ by \Cref{lem:No-Shift}. The chain
  $(\Phi_{k})_{k\geq 0}$ is a lazy Markov chain, but by
  \Cref{lem:Phi-dyn}(2) there will be $k$ non-lazy steps with
  probability $1-o(1)$ after $Ck$ total steps provided $C$ is large
  enough. For this reason we measure time in terms of non-lazy
  steps in what follows.

  The proof considers intervals of time on which a typical cycle will
  double in size. Set 
  \begin{equation*}
    \label{eq:s2.3interval}
    \mu_{s}=\lceil a_{s}\rceil, \qquad a_{s}=2\frac{C(m)}{2^{s}}\log_{2}
    \frac{C(m)}{2^{s}}, \qquad s\geq 0,
  \end{equation*}
  $K=\lceil \log_{2}(\varepsilon C(m))\rceil$, and
  $\tau_{s}=\sum_{i=0}^{s-1}\mu_{i}$ for $s\geq 1$. Our argument will
  end with $s=K$; after this interval a cycle that has successfully
  doubled in size at each step has size $2^{K}\geq \varepsilon C(m)$. Set
  $t=\tau_{K}$.  We will show that \eqref{eq:Sch2.3-1} holds for
  $\Phi_{t}$ with $t = \Theta(C(m) \log C(m)) = \Theta( \sqrt{m (\log m)^{3}})$.  By
  stationarity this suffices. We begin with a few preparatory steps.

  Since $\mathbb{P}\cb{\mathcal{G}} = 1 - m^{-3}$ and each
  $\Phi_{k}$ is an equilibrium sample, the probability that
  $\mathcal{G}$ holds for $\Theta(t)$ steps is $1-o(m^{-1})$ by a
  union bound.  Thus by \Cref{lem:GTact,lem:CycleDynGT} we may use the
  following description of the dynamics. With probability at least
  $1-O(m^{-1/8})$ the dynamics is split-merge on the cycles, with
  their size measured in units of global traversals up to an error of
  size $m^{-1/9}$. With the complementary $O(m^{-1/8})$ probability
  either a cycle is split or two cycles are merged, but the cycles are
  selected with an unknown law and (if relevant) the distribution of
  the split is unknown. We call the first situation a \emph{good
    transposition} and the second a \emph{bad transposition}. A bad
  transposition corresponds to the event that a chosen contact is
  contained in a fractional global traversal. For the rest of the
  proof let $\gamma=1/10$, and assume $m$ is large enough such that
  both of these errors are at most~$m^{-\gamma}$.

  An important observation is that when a good tranposition occurs,
  the probability of splitting off a cycle of size $s$ can be bounded
  above by essentially the same argument as for the random
  transposition model on $\ccb{N}$ for $N\in\nwithoutzero$. For random
  transpositions, this probability is at most $2s/(N-1)$, as a random
  transposition is a choice of two random indices, and there are most
  $2s$ indices within distance $s$ of the first index. To formulate an
  upper bound for directed spatial permutations, let
  $N_{k}= \sum_{\cycle(k)}\floor{\cycle(k)}_{\gtt}$ denote the number
  of global traversals contained in the cycles of $\Phi_{k}$, i.e.,
  the sum is over the set of cycles of $\Phi_{k}$. At each step of the
  dynamics a split or merge occurs, $N_{k}$ may increase or decrease
  by one due to the joining of fractional global traversals /
  splitting of global traversals. Let $\mathsf{good}$ be the event
  that a good split occurs at time $k$.  Then
  \begin{equation*}
    \label{eq:SB-Split-true}
    \P\cb{\mathsf{good}, \text{ split $\leq 2s$}}
    \leq \frac{2s}{N_{k}-1}(1+o(m^{-\gamma})),
  \end{equation*}
  where split $\leq 2s$ is shorthand for the event that a cycle
  containing at most $2s$ global traversals is created. This gives a
  uniform (in $k$) upper bound since $N_{k}\geq C(m)/2$ on
  $\mathcal{G}$ and $\floor{\cycle}_{\gtt}\geq 1$ for all cycles.
  
  We now move to the main analysis, which proceeds by determining the
  probability that typical cycles fail to grow. There are two
  mechanisms to consider.  The proof will consider
  $|V|/(n\Gamma)$ for $V\subset V(\T_{n,m})$; this is an estimate of
  the number of global traversals in $V$. 
  \medskip
  
  \noindent
  \textbf{Failing due to a split}: Let $s\geq 0$ and
  $k\in \{\tau_{s}+1,\tau_{s}+2,\dots, \tau_{s+1}\}$. Let $F^{k}$ be
  the set of vertices contained in a cycle of size at most $2^{s+1}$
  that is created by a split at time $k$.  At most two such cycles can
  be produced by a single step of the dynamics. Write $\textsf{good}$
  for the event of a good transposition occurring and $\textsf{bad}$
  for the complementary event. Then
  \begin{align*}
    \label{eq:Fexp}
    \E \frac{|F^{k}|}{n\Gamma} &\leq 2\cdot (2^{s+1}+1)\ob{\P\cb{\textsf{good}, \text{ split
                 $<2^{s+1}$}}+\P\cb{\textsf{bad}, \text{ split $<2^{s+1}$}}} \\
               &\leq 2^{s+3}\ob{\frac{2^{s+2}}{N_{k}-1}(1+o(m^{-\gamma})) + o(m^{-\gamma})} \\
               &\leq \frac{2^{2s+7}}{C(m)-2} + 2^{s+3}m^{-\gamma}, 
  \end{align*}
  where the $+1$ in the first line accounts for the at most $n\Gamma$
  vertices that can be in a cycle but not in a global traversal.  Let
  $\tilde F^{k}=\cup_{\tau=1}^{k}F^{\tau}$. Then 
  \begin{equation*}
    \label{eq:Fexp2}
    \E \frac{|\tilde F^{t}|}{n\Gamma} \leq
    \sum_{s=0}^{K-1}\mu_{s}(\frac{2^{2s+7}}{C(m)-2} + 2^{s+3}m^{-\gamma}) =
    O(\varepsilon |\log\varepsilon| C(m)).
  \end{equation*}
  We have obtained the final estimate by using that the sum of the
  first term is $O(\varepsilon |\log\varepsilon|C(m))$ while the second is
  of order $o(C(m))$, as $m>C(m)$.
  \medskip

  \noindent
  \textbf{Failing to grow}: Let
  $k\in \{\tau_{s},\tau_{s}+1,\dots, \tau_{s+1}-1\}$, $s\geq
  0$. Cycles will not be likely to grow if there are too few
  reasonably sized cycles to merge with. To track this, set $H^{k}$ to
  be $\emptyset$ if $|\cycles^{k}(2^{s})|\geq \frac{C(m)}{2}$, and
  otherwise $H^{k}=\cycles(1)$. Set
  $\tilde H^{k}=\bigcup_{r=0}^{k}H^{r}$.
  
  $H^{k}=\cycles(1)$ is a failure event, but if $H^{k}=\emptyset$ a
  cycle may still fail to grow. Write $\cycles^{k}(s)$ for the set of
  vertices in cycles of size at least $s$ at time $k$. Define
 \begin{equation*}
   \label{eq:FtG1}
   B^{s}=\cycles^{\tau_{s}}(2^{s})\setminus \left(\tilde F^{\tau_{s+1}} \cup
     \tilde H^{\tau_{s+1}-1}\cup \cycles^{\tau_{s+1}}(2^{s+1})\right),
 \end{equation*}
 and $\tilde B^{s}=\bigcup_{r=0}^{s}B^{r}$. Observe
 \begin{equation}
   \label{eq:FtG2}
   \cycles(1)=\cycles^{t}(\varepsilon C(m))\cup \tilde H^{t}\cup
   \tilde F^{t}\cup \tilde B^{K-1}.
 \end{equation}
 If $v\in B^{s}$, then for $k\in \{\tau_{s},\dots, \tau_{s+1}-1\}$ we
 have $|\cycles^{k}(2^{s})|\geq \frac{C(m)}{2}$ since $B^{s}$ and
 $\tilde H^{\tau_{s+1}-1}$ are disjoint by definition.  Conditionally
 on $v$ being in a cycle containing $r\in \ccb{2^{s},2^{s+1}-1}$
 global traversals at time $k$, and on
 $|\cycles^{k}(2^{s})|\geq \frac{C(m)}{2}$, the probability the next
 step of the dynamics merges the cycle containing $v$ with another
 cycle containing at least $2^{s}$ global traversals is at least
 \begin{equation*}
   \label{eq:FtG3}
   2^{s}(\frac{C(m)}{2}-2^{s+1}){N_{k}\choose 2}^{-1}(1+o(m^{-\gamma})).
 \end{equation*}
 Since $N_{k}\leq C(m)$ and $\varepsilon\leq\frac18$, for $s\leq K-1$
 this is bounded below by 
 \begin{equation*}
   \label{eq:FtG4}
   2^{s}(\frac{C(m)}{2}-2\varepsilon C(m))\frac{2}{C(m)(C(m)-1)}(1+o(m^{-\gamma}))
   \geq \frac{2^{s-1}}{C(m)}(1+o(m^{-\gamma})). 
 \end{equation*}
 As a result,
  \begin{align*}
    \label{eq:FtG5}
    \P\cb{v\in B^{s}} \leq
    (1-\frac{2^{s-1}}{C(m)}(1+o(m^{-\gamma})))^{\mu_{s}}&\leq \exp
    \left(-2^{s-1}\mu_{s}\frac{(1+o(m^{-\gamma}))}{C(m)}\right) \\ &\leq
    O(\frac{2^{s}}{C(m)}) \exp \left(-o(m^{-\gamma})\log
      \frac{C(m)}{2^{s}}\right) \\ &= O(\frac{2^{s}}{C(m)})
  \end{align*}
  where the penultimate inequality used that $e>2$. By summing over
  $v$ this implies
  \begin{equation*}
    \label{eq:FtG6}
    \E \frac{|\tilde B^{K-1}|}{n\Gamma} 
    =O(\varepsilon C(m)).
  \end{equation*}

  \noindent
  \textbf{Concluding}: If $H^{k}=\cycles(1)$, then $|\tilde F^{k}\cup\tilde
  B^{s-1}|\geq \frac{n\Gamma C(m)}{2}$, so by Markov's inequality,
  \begin{equation}
    \label{eq:C1}
    \E\frac{|\tilde H^{t}|}{n\Gamma}\leq C(m)\P\cb{ |\tilde F^{t}\cup \tilde
      B^{K-1}|\geq \frac{n\Gamma C(m)}{2}}\leq 2\E\frac{|\tilde
      F^{t}\cup \tilde B^{K-1}|}{n\Gamma},
  \end{equation}
  and this expectation is of order
  $O(\varepsilon C(m)) + O(\varepsilon|\log\varepsilon|
  C(m))=O(\varepsilon|\log\varepsilon|C(m))$ by the preceding two
  sections of the proof. Combined with~\eqref{eq:FtG2} we are done.
\end{proof}

\begin{lemma}
  \label{lem:Sch2.4}
  Let $0< \epsilon,\alpha<\frac{1}{8}$, and let $M$ be the minimal
  number of cycles that contain a $(1-\epsilon)$-fraction of the
  vertices of $\T_{n,m}$. Then there are $m_{1}(\epsilon)$ and a
  universal constant $K$ such that for all $m\geq m_{1}(\epsilon)$
  \begin{equation}
    \label{eq:Sch2.4}
    \P\cb{M>\alpha^{-1}|\log\alpha\epsilon|^{2}}\leq K \alpha.
  \end{equation}
\end{lemma}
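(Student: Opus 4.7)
\medskip
\noindent
The plan is to invoke \Cref{lem:Sch2.3a} together with Markov's inequality to show that, with probability at least $1-K\alpha$, a $(1-\epsilon)$-fraction of the vertices lies in cycles containing many global traversals; the number of such cycles is then bounded using the total global-traversal budget. I would work throughout on the event $\mathcal{G}$ of \Cref{lem:G}, on which every cycle contains at least one global traversal, $\{\cycle\}_{\gtt}/\floor{\cycle}_{\gtt}=O(m^{-1/8})$ uniformly, and the total global-traversal count is $(1+o(1))C(m)$ by~\eqref{eq:SysSize}.

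The core observation is that if we include in our covering family every cycle with $\floor{\cycle}_{\gtt}\geq r$, then the number of such cycles is at most $C(m)/r$, since distinct cycles have disjoint global traversals. The complementary vertices lie in $\cycles(1)\setminus\cycles(r)$. By \Cref{lem:Sch2.3a} the expected (normalized) size of this set is $O(\varepsilon|\log\varepsilon|)$ with $\varepsilon=r/C(m)$, so Markov's inequality gives
\[
\P\bigl[|\cycles(1)\setminus\cycles(r)|>\epsilon|V|\bigr]\leq C\varepsilon|\log\varepsilon|/\epsilon.
\]
For the single-scale choice $\varepsilon=\alpha\epsilon/|\log(\alpha\epsilon)|$ this is at most $K\alpha$ and yields $M\leq C(m)/r=|\log(\alpha\epsilon)|/(\alpha\epsilon)$, establishing the lemma whenever $\epsilon\geq c/|\log(\alpha\epsilon)|$ for a suitable constant~$c$.

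To handle very small $\epsilon$ and recover the stated bound $\alpha^{-1}|\log(\alpha\epsilon)|^{2}$, I would iterate the preceding argument across a dyadic collection of scales $r_{j}=2^{j}$ for $j=0,1,\dots,J$ with $J=\Theta(|\log(\alpha\epsilon)|)$. At each scale \Cref{lem:Sch2.3a} furnishes an expected-mass bound for cycles in the dyadic band $[r_{j},r_{j+1})$; after allocating failure probability $K\alpha/J$ and allowable mass $\epsilon/J$ to each scale, a union bound controls all scales simultaneously. The number of cycles needed to cover $(1-\epsilon)|V|$ is then the sum of the per-band contributions; the arithmetic produces one factor of $|\log(\alpha\epsilon)|$ from $\varepsilon|\log\varepsilon|$ in \Cref{lem:Sch2.3a} and a second from the number $J$ of scales in the union bound, totalling $\alpha^{-1}|\log(\alpha\epsilon)|^{2}$.

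The main obstacle I expect is choosing the per-scale truncation fractions and failure probabilities so that (i) the Markov bounds sum to $K\alpha$ cleanly, and (ii) the dyadic cycle counts aggregate to the claimed total without reintroducing an $\epsilon^{-1}$ factor; the delicate point is that cycles counted at different dyadic scales are disjoint, so the union is a genuine sum rather than a single worst-scale bound. The restriction $m\geq m_{1}(\epsilon)$ in the statement arises because this scheme requires $C(m)$ and $\Gamma$ to be large compared to the $\epsilon$-dependent cutoffs; in particular the finest scale $r_{J}$ must remain $\gg 1$, as otherwise the statement $\floor{\cycle}_{\gtt}\geq r_{J}$ becomes vacuous.
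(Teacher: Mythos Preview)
Your core approach is precisely the paper's: invoke \Cref{lem:Sch2.3a} and then apply Markov's inequality together with the pigeonhole bound that the number of cycles with $\floor{\cycle}_{\gtt}\geq r$ is at most $C(m)/r$. The paper's proof is in fact just two lines --- it restates the conclusion of \Cref{lem:Sch2.3a} and then defers verbatim to the final paragraph of Schramm's proof of his Lemma~2.4, which is exactly your single-scale argument with $\varepsilon=\alpha\epsilon/|\log(\alpha\epsilon)|$.

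The dyadic multi-scale iteration you sketch is not in the paper and is not needed to match it. Your worry that the single-scale choice yields only $M\leq |\log(\alpha\epsilon)|/(\alpha\epsilon)$ rather than the stated $\alpha^{-1}|\log(\alpha\epsilon)|^{2}$ is a legitimate arithmetic observation, but the paper does not address it either: the statement and its proof are imported wholesale from Schramm, and the lemma is only used downstream to feed the hypothesis~\eqref{eq:Sdiscrete}, which is already \Cref{lem:Sch2.3a} itself. So the single-scale argument is all that is required here, and your proposed refinement --- while not obviously wrong --- goes beyond what either paper actually carries out.
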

\begin{proof}
  Lemma~\ref{lem:Sch2.3a} states
  $\E \cb{ |\cycles(1)\setminus \cycles(\epsilon C(m))|}\leq O(1)
  \epsilon |\log\epsilon |C(m). $ This equation implies the result
  exactly as discussed in the final paragraph of~\cite[Proof of
  Lemma~2.4]{Schramm2011}.
\end{proof}

\subsubsection{Existence of macroscopic cycles and convergence to
  $\mathsf{PD}(1)$}
\label{sec:PD}

Write $\Phi^{(m)}$ to emphasize the $m$-dependence of $\Phi$. To
establish \Cref{thm:PD1} we must show that if $Y$ has distribution
$\mathsf{PD}(1)$, then for any $\epsilon>0$, if $m$ is sufficiently
large then there is a coupling of $\cstructure(\Phi^{(m)})$ and $Y$
such that
$\P\cb{ \norm{\cstructure(\Phi^{(m)})-Y}_{\infty}<\epsilon}\geq
1-\epsilon$.  A theorem due to Schramm will be
useful~\cite{Schramm2011}.
\begin{theorem}
  \label{thm:Sdiscrete}
  Suppose $(\pi^{(N)}_{k})_{k\geq 0}$ is the random transposition chain on
  $\ccb{N}$ with an initial condition $\pi^{(N)}_{0}$. Suppose that
  for $\epsilon'>0$ small enough,
  \begin{equation}
    \label{eq:Sdiscrete}
    \E\cb{ N-|\cycles(\epsilon' N)|} < O(1) \epsilon' |\log \epsilon'| N
  \end{equation}
  where the expectation is with respect to $\pi_{0}$.  Let $Y$ have
  distribution $\mathsf{PD}(1)$. For $\epsilon>0$ let
  $q=\epsilon^{-1/2}$. If $N$ is large enough there is a
  coupling of the random transposition dynamics such that
  \begin{equation*}
    \P\cb{ \norm{\cstructure(\pi^{(N)}_{q})-Y}_{\infty}<\epsilon}\geq
1-\epsilon. 
  \end{equation*}
\end{theorem}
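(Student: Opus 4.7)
The plan is to follow Schramm's original strategy from~\cite{Schramm2011}: project the random transposition dynamics onto the evolution of normalized cycle sizes, approximate this projection by the abstract split-merge Markov chain on the infinite-dimensional simplex $\Delta$, and invoke a quantitative convergence-to-$\mathsf{PD}(1)$ result for that chain. The first step is to use the hypothesis~\eqref{eq:Sdiscrete} and Markov's inequality to show that, at time $0$, a bounded number of cycles of size at least $\epsilon' N$ cover a $(1-\epsilon/2)$-fraction of $\ccb{1,N}$ with probability at least $1-\epsilon/2$. This is the same deduction by which~\Cref{lem:Sch2.4} follows from~\Cref{lem:Sch2.3a}, and it provides a favourable ``mass-concentrated'' initial configuration for the subsequent coupling.

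Next, observe that a random transposition step picks two uniform indices, so that when projected onto cycle sizes it acts as a size-biased split-merge move on $\Delta$: pick two elements size-biasedly, merge them if they lie in distinct atoms and uniformly split the chosen atom if they coincide. A classical computation (via the Ewens sampling formula or Kingman's paintbox) shows that $\mathsf{PD}(1)$ is the unique invariant law of this chain. One then invokes Schramm's quantitative mixing estimate: started from any configuration whose mass in elements of size less than $\epsilon'$ is small, the split-merge chain on $\Delta$ is within $\epsilon$ of $\mathsf{PD}(1)$ in the relevant metric after $O(\epsilon^{-1/2})$ macroscopic time units. The argument couples two copies of the chain via common uniform random variables driving the size-biased selections and contracts the supremum gap at rate $\Theta(\epsilon^{1/2})$ per step, which gives the $q=\epsilon^{-1/2}$ scaling.

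Finally, I would couple the projected transposition chain to the abstract split-merge chain on $\Delta$ step-by-step, absorbing the error from steps at which a uniformly chosen index falls into the dust (that is, into a cycle of size less than $\epsilon' N$). The main obstacle is that small cycles may be produced by splits during the window $[0,q]$: even though the hypothesis controls the initial dust, new dust can arise dynamically, and indices landing in it decouple the transposition chain from its split-merge projection. To handle this one iterates the hypothesis through \Cref{lem:Sch2.3a}-style estimates and applies a union bound over the $q=\epsilon^{-1/2}$ macroscopic time units, showing that the total small-cycle mass remains $O(\epsilon)$ uniformly in time. This closes the coupling and delivers the desired $\|\cdot\|_\infty$ estimate between $\cstructure(\pi^{(N)}_q)$ and $Y$.
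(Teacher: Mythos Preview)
The paper does not give its own proof of this statement. Immediately after the theorem the authors write (in the remark) that \Cref{thm:Sdiscrete} ``is not formally stated in~\cite{Schramm2011}'' and that they ``have extracted the hypotheses as discussed in~\cite[Section~4]{Schramm2011}, where Schramm proves his main theorem.'' In other words, the paper treats the theorem as a black-box citation to Schramm's work rather than proving it.

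Your outline is a reasonable high-level summary of Schramm's strategy, and the first reduction (from the hypothesis~\eqref{eq:Sdiscrete} to a bounded-number-of-large-cycles statement via Markov's inequality) is exactly the passage the paper records in \Cref{lem:Sch2.4}. That said, some of your mechanistic claims are imprecise if read as a proof rather than a sketch. In particular, Schramm's argument in~\cite[Sections~3--4]{Schramm2011} does not proceed by showing a coupling of two split-merge chains ``contracts the supremum gap at rate $\Theta(\epsilon^{1/2})$ per step''; rather, it constructs a coupling to a fresh $\mathsf{PD}(1)$ sample via a stick-breaking-type argument, and the scale $q=\epsilon^{-1/2}$ emerges from balancing the number of steps needed for the large cycles to be adequately resampled against the accumulated chance of interacting with the dust. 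Your final paragraph about controlling dynamically created small cycles is closer in spirit, though Schramm's actual bookkeeping is more delicate than a straight union bound over $q$ steps. If you intend this as a genuine proof (rather than a pointer to~\cite{Schramm2011}, which is all the paper provides), those details would need to be filled in from Schramm's Sections~3--4.
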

\begin{remark}
  \Cref{thm:Sdiscrete} is not formally stated
  in~\cite{Schramm2011}. We have extracted the hypotheses as discussed
  in~\cite[Section~4]{Schramm2011}, where Schramm proves his main
  theorem.
\end{remark}

We can now complete our proof of \Cref{thm:PD1}.
\begin{proof}[Proof of \Cref{thm:PD1}]
  Fix $\epsilon>0$ and let $q=\epsilon^{-1/2}$. Given $\Phi$, let $N$
  denote the number of global traversals contained in the cycles of
  $\Phi$. By \Cref{lem:G}\eqref{i:G1}, $N\geq m/(2\Gamma)$ on the
  event $\mathcal{G}$, as each cycle contains at least one global
  traversal, and the total amount of the system in fractional global
  traversals cannot exceed that in global traversals. Note that (see
  \Cref{def:GTn}) $m/\Gamma\to\infty$ as $m\to\infty$.

  Let $\pi_{0}$ be the partition of $\ccb{N}$ that agrees with the
  cycle structure of $\Phi$. That is, enumerate the global traversals,
  and if two global traversals are in a common cycle, we put them in a
  common part.  Arbitrarily choose a permutation compatible with the
  given block structure. By \Cref{lem:Sch2.3a,lem:Sch2.4} and the
  first paragraph, $\pi_{0}$ satisfies the hypotheses of
  Theorem~\ref{thm:Sdiscrete} when $m$ is sufficiently large.

  To complete the proof of the theorem we will use a coupling of the
  split-merge dynamics on the cycles of $\Phi$ induced by the directed
  spatial permutation Glauber dynamics with the split-merge dynamics
  induced by the random transposition dynamics started from $\pi_{0}$.
  Write $\Phi_{t}$ and $\pi_{t}$ for the respective marginals. The
  coupling has to account for the fact that the number of global
  traversals $N_{t}$ in $\Phi_{t}$ can change with $t$, and goes as
  follows. Note that $N_{t+1}-N_{t}\in \{-1,0,1\}$. By
  \Cref{lem:G,lem:CycleDynGT} the induced split-merge dynamics selects
  two distinct uniformly chosen global traversals with probability
  $1-o(m^{-1/9})$, and we can couple this perfectly with choosing two
  distinct vertices under the random transposition dynamics if
  $N_{t}=N_{0}$. To account for the possibility that
  $N_{0}\neq N_{t}$, suppose $N_{t}-N_{0}>0$ and mark $N_{t}-N_{0}$ of
  the global traversals. The probability that one of these global
  traversals is selected by the dynamics is at most $2t/N_{0}$; if
  neither is selected we can couple a step of the dynamics as
  above. If $N_{t}-N_{0}<0$ we couple similarly, but this time instead
  adding $|N_{t}-N_{0}|$ fictitious global traversals. The probability
  that our coupling succeeds without ever selecting a marked or
  fictitious global traversal in the first $q$ steps is of order
  $1-\Theta(q^{2})/N_{0} = 1-o(1)$. The probability of always
  selecting two distinct global traversals is of order
  $1-qm^{-1/9}=1-o(1)$. Hence we may assume each step is coupled as
  above.

  Under this coupling the number of global traversals in a cycle of
  $\Phi_{t}$ may not be exactly equal to the number of indices in the
  corresponding part of $\pi_{t}$, but the discrepancy is at most
  $t$. The discrepancy can arise due to merging or splitting of
  fractional global traversals. Since $\Phi$ is invariant under the
  Glauber dynamics, this implies that the size
  $\floor{\cycle_{i}}_{\gtt}$ of the $i$\textsuperscript{th} largest
  cycle of $\Phi$ is within $q$ of the number of vertices contained in
  the $i$\textsuperscript{th} largest cycle of $\pi_{q}$. Since $q$ is
  of order one as $m\to\infty$, this discrepancy is negligible due to
  the normalization present in the definition of the cycle
  structure. This shows that we can couple the vector
  $(\floor{\cycle_{i}}_{\gtt} /
  (\sum_{j}\floor{\cycle_{j}}_{\gtt}))_{i\geq 1}$ with a
  $\mathsf{PD(1)}$ sample as desired by \Cref{thm:Sdiscrete}.  To
  obtain the statement of \Cref{thm:PD1}, which measures cycle sizes
  by the number of vertices they contain, we use \eqref{eq:SysSize},
  which implies that measuring size by a count of global traversals is
  equivalent (up to errors of size $o(1)$) to measuring by a count
  of vertices.
\end{proof}

\subsubsection*{Acknowledgements}
\label{sec:acknowledgements}

A.H.\ is supported by the National Science Foundation under DMS grants
1855550 and 2153359 and by the Simons Foundation as a 2021 Simons
Fellow. T.H.\ was supported by an NSERC PDF when this work was
initiated. We thank the referee for a careful
  reading of the paper, and for helpful comments and suggestions.

\bibliographystyle{plain}
\bibliography{DSP}

\end{document}